\documentclass[12pt,psfig]{article}
\usepackage[dvipsnames]{xcolor}
\usepackage[toc,page]{appendix}
\usepackage{amsmath, amsthm, amssymb,amsfonts,amscd}
\usepackage{graphicx,epsfig,latexsym}
\usepackage{cite,calc,xcolor,subfigmat,mathrsfs,dsfont}
\usepackage[pagebackref=false]{hyperref}
\usepackage{graphics}
\usepackage{subfigure}
\usepackage{graphicx,epsfig}
\usepackage{amsfonts}
\usepackage{amscd}
\usepackage{mathrsfs}
\usepackage{enumerate}
\usepackage{latexsym}
\usepackage{tikz}
\usepackage[all]{xy}
\newcommand{\Sl}{\mathfrak{sl}_2}
\newcommand{{\N}}{{\rm N}}
\newcommand{{\M}}{{\rm M}}
\newcommand{\D}{{\partial }}
\newcommand{\B}{\mathscr{B}}

\newcommand{\mb}[1]{\underline{#1}}

\newcommand{\NZ}{\mathbb{N}_0}

\newcommand{\Span}{{\rm span}}
\newcommand{\ad}{{\rm ad}}

\newcommand{\ie}{{\em i.e.,} }
\newcommand{\eg}{{\em e.g.,} }
\newtheorem{thm}{Theorem}[section]
\newtheorem{cor}[thm]{Corollary}
\newtheorem{lem}[thm]{Lemma}
\newtheorem{prop}[thm]{Proposition}
\theoremstyle{definition}
\newtheorem{defn}[thm]{Definition}
\newtheorem{exm}[thm]{Example}
\newtheorem{rem}[thm]{Remark}

\newtheorem{notation}[thm]{Notation}
\numberwithin{equation}{section}
\usepackage{color}
\newcounter{IssueCounter}
\newtheorem{Issue}[IssueCounter]{Issue}

\setcounter{secnumdepth} {5}
\setcounter{tocdepth} {5}
%\topmargin= 0.4in
%\oddsidemargin=0.00in
%\topmargin=-0.8in
%\oddsidemargin=-0.40in
%My topmarginandoddones

\topmargin= -0.62in   % for .pdf
\oddsidemargin=-0.45in
\textheight= 9.53in %8.75in
\textwidth=7.4in
\parindent= 0.3in

\def\be {\begin{equation}}
\def\ee {\end{equation}}
\def\ba {\begin{eqnarray}}
\def\ea {\end{eqnarray}}
\def\bpr {\begin{proof}}
\def\epr {\end{proof}}
\def\bes {\begin{equation*}}
\def\ees {\end{equation*}}
\def\bas {\begin{eqnarray*}}
\def\eas {\end{eqnarray*}}

\begin{document}
\baselineskip=18pt
\renewcommand {\thefootnote}{\dag}
\renewcommand {\thefootnote}{\ddag}
\renewcommand {\thefootnote}{ }

\pagestyle{empty}
\begin{center}
                \leftline{}
                \vspace{-0.00 in}
{\Large \bf
Vector potential normal form classification for completely integrable solenoidal nilpotent singularities
%\\ [0.5ex]
} \\ [0.4in]

{\large Majid Gazor$^{*}$\footnote{$^*\,$Corresponding author. Phone: (98-31) 33913634; Fax: (98-31) 33912602; Email: mgazor@cc.iut.ac.ir.},
Fahimeh Mokhtari}

\vspace{0.15in}
{\small {\em Department of Mathematical Sciences,
Isfahan University of Technology
\\[-0.5ex]
Isfahan 84156-83111, Iran
}}
\vspace{0.30in}

{\large Jan A. Sanders}

\vspace{0.15in}
{\small {\em Department of Mathematics, Faculty of Sciences, Vrije Universiteit,
\\[-0.5ex]
Amsterdam 1081 HV, The Netherlands
}}

\today
\noindent
%({Submitted to {\bf ?}}: \ February 15, 2010)
\end{center}

\vspace{-0.10in}

\baselineskip=15pt

\:\:\:\:\ \ \rule{5.88in}{0.012in}

\begin{abstract}

We introduce a \(\Sl\)-invariant family of nonlinear vector fields with a non-semisimple triple zero singularity. In this paper we are concerned with
characterization and normal form classification of these vector fields. We show that the family constitutes a Lie algebra structure and each vector field from
this family is solenoidal, completely integrable and rotational. All such vector fields share a common quadratic invariant. We provide a Poisson structure for
the Lie algebra from which the second invariant for each vector field can be readily derived. We show that each vector field from this family can be uniquely
characterized by two alternative representations; one uses a vector potential while
the other uses two functionally independent Clebsch potentials. Our normal form results are designed to preserve these structures and representations.
The results are implemented in Maple in order to compute vector potential and the Clebsch potential normal forms of a given vector field from this family.
Some practical normal form coefficient formulas for degrees of up to four are presented.
\end{abstract}

\vspace{-0.2in}
\vspace{0.10in} \noindent {\it Keywords}: Normal form classification; Triple zero singularity; \(\Sl\)-Lie algebra representation;
Clebsch potentials; Completely integrable vector field.

\vspace{0.10in} \noindent {\it 2010 Mathematics Subject Classification}:\, 34C20; 34A34.
\vspace{0.2in}

\section{Introduction}

We are concerned with nonlinear normal form classification of a \(\Sl\)-Lie algebra generated family of vector fields with a non-semisimple nilpotent
linear part, \ie \({\rm N}:={x}\frac{\D}{\D{{y}}}+2{y}\frac{\D}{\D {{z}}}\). Jacobson-Morozov theorem provides the other two generators, that
they form a triple along with \({\rm N},\) for a \(\Sl\)-Lie algebra. Consider \({\rm N}\) as a differential operator. Then by the adjoint action of
the \(\Sl\)-Lie algebra on (nonlinear) vector fields, we introduce a \(\Sl\)-invariant family of vector fields. We show that the set of
all such nonlinear vector fields whose linear part is a multiple scalar of \({\rm N}\) constitutes a Lie algebra, where we denote it by \(\B\).

An important goal in our normal form results is to detect, compute and preserve possible symmetries and geometrical features of a vector field's flow.
Hence, several geometric properties for our introduced \(\Sl\)-invariant family
are carefully studied in this paper. A natural dynamics analysis of such vector fields must
respect these geometrical features which have applications in different applied disciplines. However, the classical normal form computations
typically destroy certain symmetries of the truncated normal form system. These may include the system's properties such as
volume-preserving, Clebsch potentials, vector potentials, etc. Thus in either of these cases, the dynamics analysis of the truncated normal
form is not appropriate. Hence it is important to use permissible transformations which preserve the
system's symmetry; also see \cite{GazorMokhtariEul,GazorShoghi,GazorMokhtariInt}. One of our most important claimed contributions here is that our (truncated)
normal form results preserve all the different representations
(described below) in the paper such as vector potential, Clebsch potentials, and the volume-preserving property.

Solenoidal vector fields appear in disciplines such as magnetic fields and fluid mechanics; \eg see
\cite{kotiuga1991clebsch,longcope2005topological,mackay1994transport,haller1998reduction}.
Computing the invariants of such vector fields is an important goal in this paper. Any solenoidal vector field, say \(v,\) takes a
vector potential representation, that is, there exists a vector field, say \(w,\) whose curl is \(v,\) \ie \(\nabla\times w= v.\) We prove that all vector
fields in \(\B\) are solenoidal and provide the method and formulas for deriving their vector potential normal forms. We further introduce a Poisson algebra
that is Lie-isomorphic
to \(\B\) through a Lie isomorphism \(\psi.\) Another most important claimed contributions in this paper is that the Lie isomorphism \(\psi\) associates
a first integral \(\psi(v)\) to each vector field \(v\) from \(\B\). We further show that the quadratic polynomial \(\Delta:=xz-y^2\) is a second
first integral for all vector fields in \(\B.\)

Analytic normalization of an analytic vector field has close relations with complete integrability of the vector field;
\eg see \cite{Stolovitch,ZungConvergence,StolovitchSingular,ZhangAnalytic}. We recall that two first integrals for \(v\) in \(\B\) are called {\it functionally independent}
when their gradients have a rank of \(2\) for almost everywhere; \eg see \cite[page 3553]{ZhangAnalytic} and \cite{StolovitchSingular}. The level curves of these
invariants provide a comprehensive understanding about the orbits in the state space associated with the vector field's flow. Each vector field
\(v\) from the \(\Sl\)-invariant Lie algebra \(\B\) is a completely integrable solenoidal vector field; \ie we show that
the invariants \(\Delta\) and \(\psi(v)\) for each \(v\in \B\) are functionally independent. There is another alternative representation for completely integrable
solenoidal vector fields, that is given by the two of the vector field's
functionally independent invariants. Indeed, we prove that each vector field \(v\) in \(\B\) equals the exterior product of the gradients of \(\Delta= xz-y^2\)
and \(\psi(v)\); the latter is obtained through the Lie isomorphism \(\psi\) between \(\B\) and our introduced Poisson algebra.
The first integrals in the exterior product are referred as Clebsch potentials or Euler potentials of the vector field \(v\); \eg see
\cite{longcope2005topological,kotiuga1991clebsch}. We refer to \(\Delta\) by the primary
Clebsch potential and \(\psi(v)\) as the secondary Clebsch potential for \(v.\) We further conclude that these families of triple zero singularities
are rotational vector field, that is, their curl is non-zero. This implies that these are not gradient vector fields.

Finally, we prove that \(\B\) is the set of all multiple scalars
of solenoidal vector fields such as \(v,\) that is given by
\be\label{eq1}
{v}:=-{x}\frac{\D}{\D{{y}}}-2{y}\frac{\D}{\D {{z}}}+{\rm v}(x, y, z),
\ee where \({\rm v}: \mathbb{R}^3\rightarrow \mathbb{R}^3\) denotes a vector field without constant and linear parts,
\be\label{eq2}{\rm div}(v)=0, \qquad\hbox{and }\qquad v(\Delta)=0\quad\hbox{ where }\quad\Delta= xz-y^2.\ee
Note that for our convenience we interchange the uses of notations and terminologies such as vector fields, differential systems and differential operators.
We also remark that \(x\) and \(\Delta\) are the
two generators of the invariant algebra for the linear vector field \({\rm N}.\) We refer to the vector field \(v\) in equation \eqref{eq1}-\eqref{eq2} by a completely
integrable system since it has two functionally independent invariants \(\Delta\) and \(\psi(v)\). The Poisson structure and the Lie isomorphism \(\psi\)
provide a practical method for deriving the second first integral within the invariant algebra of vector fields given by \eqref{eq1}-\eqref{eq2} and their normal forms.

\markright{{\footnotesize {\it M. Gazor, F. Mokhtari and J.A. Sanders \hspace{1.5in} {\it Completely integrable nilpotent singularities}}}}
\pagestyle{myheadings}

Normal form classification of nilpotent singularities has been a challenging task. Even in the two--dimensional case, there have been
numerous important contributions in various types and approaches; \eg see
\cite{baidersanders,BaidSand91,dumortier2001new,GazorSadri17,GazorMoazeni,Zoladek03,Zoladek02,ZoladekNF2015,Stroyzyna,yy2001b,yy2001,kw,wlhj,ChenWang,chendora,Algaba}.
There have only been a few contributions in three dimensional state space cases; see \cite{YuYaun3Zero,Algaba30} where hypernormalization
is performed up to degree three; also see \cite{GazorMokhtariEul,GazorMokhtari,GazorSadri} and \cite{MurdBook,MurdockFinal,murdock2016box,Murd04,Murd08}.
In this paper we provide a complete normal form classification for all vector fields \(v\) in equations
\eqref{eq1}-\eqref{eq2}, that is, the set of all completely integrable solenoidal nilpotent singularities where \(\Delta\) is one of their
invariants and a multiple scalar of \({\rm N}\) is their linear part. These vector fields and their normal forms are uniquely characterized by
their secondary Clebsch potential. Indeed, the primary Clebsch potential \(\Delta\) is always preserved throughout the normalization steps
while the normalizing transformations naturally reflect the normal form changes into the secondary Clebsch potential. In Theorem \ref{UNF}, we prove that a
vector field given by \eqref{eq1}-\eqref{eq2} can be either linearized or uniquely transformed into the formal normal form vector field
\ba\label{SNFInt}
&{\bf w}:=-{x}\frac{\D}{\D{{y}}}-2{y}\frac{\D}{\D{{z}}}+{{z}}^{p} ({z}\frac{\D}{\D{{y}}}
+2{y}\frac{\D}{\D{{x}}})+\sum_{k=p+1}^{\infty}\sum_{i=0}^{[{\frac{k+p}{2}}]} b_{i,k}{{z}}^i(xz-y^2)^{k-2i+p}({z}\frac{\D}{\D{{y}}}
+2{y}\frac{\D}{\D{{x}}}),&
\ea where \(b_{i,k}\in \mathbb{R}\) and \(p\) is a natural number. In addition, the secondary Clebsch potential normal form is given by
\ba\label{invSNFInt}
&I(x, y, z)={x}+\frac{1}{p+1}{{z}}^{{p}+1}+\sum_{k=p+1}^{\infty}\sum_{i=0}^{[{\frac{k+p}{2}}]}
{\frac{b_{{{i},k}}}{{i}+1}}{{z}}^{{i}+1}{( {x}{z}-{{y}}^{2} )}^{k-2i+p}.&
\ea The normal form invariant \eqref{invSNFInt} can sometimes be used for further reduction of the normal form vector fields; \eg see section \ref{Sec5}.

Now we describe the organization of the rest of this paper. We introduce a family of \(\Sl\)-invariant irreducible vector spaces of vector
fields in section \ref{Sec2}. We further prove that this family constitutes a Lie algebra and derive their associated structure constants.
Section \ref{Sec3} is devoted for introducing a Poisson algebra and prove that it is Lie isomorphic to \(\B.\)
Next, we discuss the geometrical properties of \(\B\) family in section \ref{Sec4}. In particular, we show that our \(\Sl\)-invariant introduced family of vector fields
are fully characterized by equations \eqref{eq1}-\eqref{eq2}. Two further representations for each such vector field are presented in this section
by using their Clebsch potentials and vector potentials. Section \ref{Sec5} is dedicated to study the normal form classification for vector fields \eqref{eq1}-\eqref{eq2}.
Some practical formulas for normal form coefficients of up to degree three for a given triple zero singularity \eqref{eq1}-\eqref{eq2} is presented. Finally, we introduce
two more \(\Sl\)-invariant families of vector fields in Appendix \ref{SecAppend}. These along with the family given by equations \eqref{eq1}-\eqref{eq2}
would amount to three family types of \(\Sl\)-invariant vector fields so that each three dimensional vector field can be uniquely Taylor expanded in terms of vector
polynomial generators from these three types.

\section{Algebraic structures }\label{Sec2}

Let
\ba\label{Triad}
&\N :={x}\frac{\D}{\D {y}}+2 {y} \frac{\D}{\D {z}}, \qquad \M:= {z} \frac{\D}{\D {y}}+ 2 {y} \frac{\D}{\D {x}}, \quad \hbox{and }
\quad {\rm H}:= [{\rm M}, {\rm N}]= {\rm M} {\rm N}-{\rm N}{\rm M}=2 {z} \frac{\D}{\D {z}} - 2{x} \frac{\D}{\D {x}}.&
\ea
The triple \(\{\M,\N, {\rm H}\}\) generates a \(\Sl\) Lie algebra, \ie
\bes[\M,\N]={\rm H}, \qquad [{\rm H},\M]=2\M, \qquad [{\rm H},\N]=-2\N.\ees
We denote \(\N^n\mb f\, v=(\N^n f)\, v\) for the iterative action of \({\rm N}\) as a differential operator on the scalar function \(f\) that is also multiplied with \(v.\)
Further for a vector field \(v,\) \(\N^n  \mb{ v}\) is inductively defined by
\bes
{\rm N} \mb{ v}:={\rm ad}_{\N}  { v}, \quad\hbox{ and }\quad \N^n  \mb{ v}:= {\rm ad}_{\N} {\N}^{n-1}  \mb {v} \qquad \hbox{ for } n>1.
\ees We often put the underline only on the last element on which the operator acts, \ie \(\N^n f\mb gh:= h\N^n \mb{fg}\) and
\({\rm N}f \mb{\rm M}:= {\rm N} \mb{f\rm M}\). Note that \({\rm N}\) as an operator distinguishes vector fields from
scalar functions: the operator \(N\) merely acts on scalar functions as a differential operator while it acts as a Lie operator on vector fields.
By \cite[Proposition 2]{CushSandCont}, \(\mathbb{R}[[{z}, \Delta]]\) is the invariant ring for \({\rm M}\).
For a homogeneous scalar polynomial function \(f:\mathbb{R}^3[{x},{y},{z}]\rightarrow\mathbb{R}\) in \(\ker {\rm M}=\left\langle{\Delta,{z}}\right\rangle,\)
there exists an integer \( \omega_f\in \mathbb{Z}\) so that
\bas
{\rm H} f= \omega_f f,
\eas where \(\omega_f\)  and \(f\) are called  the  eigenvalue and eigenfunction of the differential operator \({\rm H},\) respectively.
The algebra of first integrals for \(\M\) is the same as \(\ker {\rm M}= \left\langle\Delta, {z} \right\rangle\); see \cite{CushmanSanders},
\cite[Chapter 2]{MurdBook} and  \cite[Chapter 9]{SandBook} for more details on the representation of \(\Sl\) and normal form theory. In this section
we use the \(\Sl\)-triple \eqref{Triad} to generate a family of irreducible \(\Sl\)-invariant vector spaces. The set of all such invariant vector
spaces constitutes a Lie algebra. This consists of all completely integrable  and solenoidal vector fields of triple zero singularities, that is,
their linear part is a scalar multiple of \(N\) in \eqref{Triad}. Vector fields from this family can be considered as alternatives in three
dimensional state space for completely integrable Hamiltonian systems which always require even dimensions in state space; also see
\cite[page 2812]{GazorMokhtariInt}.

\begin{notation}\label{not1}
\begin{itemize}
\item The following notations frequently appear in this paper.
\ba\label{sigma12}
&\sigma_1:=\sigma_1(s_1,s_2,k_1,k_2)=s_1+s_2+k_1+k_2, \quad
\sigma_2:=\sigma_2(q_1,q_2,i_1,i_2)=q_1+q_2-i_1-i_2.&
\ea
\item We denote \({{\bf e}_1}, {{\bf e}_2}\) and \({{\bf e}_3}\) for the standard basis of \(\mathbb{R}^3\) and \(\kappa_{l,i}:=\frac{i !}{(i-l)!}.\)
\item We use the Pochhammer \(k\)-symbol notation for any \(a, b\in\mathbb{R}, k\in \mathbb{N}\) as \(\left(a\right)^k_b:=\prod_{j=0}^{k-1}(a+jb).\)
\item  Throughout this paper we frequently use some constants or variables with negative powers in the denominator (or numerator) of a fraction. The reader should
merely treat this as a formal misuse of notation to shorten the formulas.
\end{itemize}
\end{notation}
Now we present some technical results which play a central role in this paper.
\begin{lem}\label{NM}
Let \(f\) be a homogeneous scalar polynomial function. Then,
\begin{eqnarray}
\N^nf\mb{\M}&=& \N^{n}\mb{f} {\M}- n \N^{n-1}\mb{f} {\rm H}-n(n-1)\N^{n-2}\mb{f} {\N},\quad \hbox{for any } \;\; n\in \mathbb{N}_0.
\end{eqnarray}
 \end{lem}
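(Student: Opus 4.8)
The plan is to prove the identity by induction on $n$, reading the left-hand side $\N^n f\,\mb{\M}$ as $\ad_\N^n(f\M)$, the $n$-fold Lie action of $\N$ on the vector field $f\M$ (this is exactly the underline convention $\N f\mb{\M}:=\N\mb{f\M}$ from the preamble to the lemma). The only two ingredients are the $\Sl$ structure brackets rewritten for $\N$, namely $[\N,\M]=-{\rm H}$, $[\N,{\rm H}]=2\N$ and $[\N,\N]=0$, together with the Leibniz rule $\ad_\N(g Y)=(\N g)\,Y+g\,\ad_\N Y$, valid for any scalar function $g$ and any constant-coefficient vector field $Y$. Here $\N g$ denotes $\N$ acting as a first-order differential operator on $g$, so $\N(\N^{n-1}f)=\N^n f$. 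The base cases are immediate: for $n=0$ both sides equal $f\M$, while for $n=1$ the Leibniz rule gives $\ad_\N(f\M)=(\N f)\M+f[\N,\M]=(\N f)\M-f{\rm H}$, which matches the right-hand side once the terms carrying the vanishing coefficients $n$ and $n(n-1)$ are dropped, as permitted by the formal-notation convention in Notation~\ref{not1}.

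For the inductive step I would assume the formula for $n$ and apply $\ad_\N$ to each of its three summands, using the Leibniz rule and then the bracket relations to obtain
\begin{align*}
\ad_\N\big[(\N^{n}f)\,{\M}\big]&=(\N^{n+1}f)\,{\M}-(\N^{n}f)\,{\rm H},\\
\ad_\N\big[(\N^{n-1}f)\,{\rm H}\big]&=(\N^{n}f)\,{\rm H}+2(\N^{n-1}f)\,{\N},\\
\ad_\N\big[(\N^{n-2}f)\,{\N}\big]&=(\N^{n-1}f)\,{\N}.
\end{align*}
Substituting these into $\ad_\N^{n+1}(f\M)=\ad_\N\big[\ad_\N^n(f\M)\big]$ and collecting the coefficients of $\M$, ${\rm H}$ and $\N$ is the heart of the computation. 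The coefficient of $\M$ is plainly $\N^{n+1}f$; the coefficient of ${\rm H}$ becomes $-(\N^n f)-n(\N^n f)=-(n+1)\N^n f$; and the coefficient of $\N$ becomes $-2n(\N^{n-1}f)-n(n-1)(\N^{n-1}f)=-n(n+1)\N^{n-1}f$. These are exactly the coefficients of the claimed formula with $n$ replaced by $n+1$, so the induction closes.

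The single point requiring care — the closest thing to an obstacle in an otherwise mechanical argument — is the bookkeeping of the $\N$-coefficient, where the contribution $2n$ arising from $[\N,{\rm H}]=2\N$ must combine with the inherited $n(n-1)$ to yield $2n+n(n-1)=n(n+1)$. One should also track the boundary indices: the factors $\N^{n-1}f$ and $\N^{n-2}f$ are only meaningful when their prefactors $n$ and $n(n-1)$ are nonzero, so for $n=0,1$ they are read as absent, consistent with the formal convention on negative powers. No $\Sl$-representation machinery beyond the three structure brackets is needed, which is why the induction is the most economical route.
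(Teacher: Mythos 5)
Your proof is correct and follows essentially the same route as the paper's: induction on $n$, using the Leibniz rule for $\ad_\N$ on products $g\,Y$ together with the bracket relations $[\N,\M]=-{\rm H}$, $[\N,{\rm H}]=2\N$, and the same coefficient bookkeeping $2n+n(n-1)=n(n+1)$ in the inductive step. Your treatment of the base cases and of the vanishing low-index terms is, if anything, slightly more explicit than the paper's.
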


\begin{proof}
The proof is by induction. For  \(n=1,\) we have
\bas
\N f\mb{\M}=\N(f){\M}+f[\N,{\M}]= \N(f){\M}-f{\rm H}=\N\mb f{\M}-f{\rm H}.
\eas
By the  induction hypothesis  we have
\bas
{\rm N}^{n+1}f\mb{\M}&=&[{\rm N}, {\rm N}^{n}\mb{f} {\M}- n {\rm N}^{n-1}\mb{f} {\rm H}- n(n-1){\rm N}^{n-2}\mb{f} {{\rm N}}]
\\
&=&
{\rm N}^{n+1}\mb{f} {\M}+ {\rm N}^{n}\mb{f} {{\rm N}\M}- n {\rm N}^{n}\mb{f} {\rm H}- n {\rm N}^{n-1}\mb{f} {{\rm N}\rm H}- n(n-1){\rm N}^{n-1}\mb{f} {{\rm N}}
\\
&&- n(n-1){\rm N}^{n-2}\mb{f} {{\rm N}{\rm N}}-({\rm N}^{n}\mb{f} {\M{\rm N}}- n {\rm N}^{n-1}\mb{f} {\rm H\N}- n(n-1)\N^{n-2}\mb{f} {{\rm N}{\rm N}})
\\
&=&
{\rm N}^{n+1}\mb{f} {\M}+{\rm N}^{n}\mb{f} {[{\rm N},\M]}- n {\rm N}^{n}\mb{f} {\rm H}- n {\rm N}^{n-1}\mb{f} {[\N,\rm H]}- n(n-1)\N^{n-1}\mb{f} {\N}
\\
&=&
{\rm N}^{n+1}\mb{f} {\M}- (n+1) {\rm N}^{n}\mb{f} {\rm H}- n(n+1){\rm N}^{n-1}\mb{f} {{\rm N}}.
\eas
This proves the statement; also see \cite[Proposition 1]{CushmanSanders}.
\end{proof}
There is an  important corollary to this lemma.
\begin{cor}\label{NnfM}
For any homogeneous function \(f\in \ker {\rm M},\) we have
\begin{eqnarray}\label{NfM}
&\N^nf\mb{\M}=\frac{2(\omega_f-n+2)\kappa_{n, \omega_f+2}\N^{n+1}{z}\mb{f}}{(\omega_f+2)\kappa_{n+1,\omega_f+2}} \frac{\D}{\D {x}}
+\frac{(\omega_f-2n+2)\kappa_{n,\omega_f+2}\N^{n}{z}\mb{f}}{(\omega_f+2)\kappa_{n, \omega_f+2}}\frac{\D}{\D {y}}
- \frac{2n\kappa_{n,\omega_f+2}\N^{n-1}{z} \mb{f}}{(\omega_f+2)\kappa_{n-1,\omega_f+2}}\frac{\D}{\D {z}}.&
\end{eqnarray}
\end{cor}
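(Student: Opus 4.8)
The plan is to read off the result directly from Lemma \ref{NM} by inserting the explicit generators and then re-expressing everything through the iterated derivatives \(\N^k z\mb{f}=\N^k(zf)\). First I would substitute \(\M=2y\frac{\D}{\D x}+z\frac{\D}{\D y}\), \({\rm H}=-2x\frac{\D}{\D x}+2z\frac{\D}{\D z}\) and \(\N=x\frac{\D}{\D y}+2y\frac{\D}{\D z}\) into the identity \(\N^nf\mb{\M}=\N^{n}\mb{f}\M-n\N^{n-1}\mb{f}{\rm H}-n(n-1)\N^{n-2}\mb{f}\N\) and collect the coefficients of \(\frac{\D}{\D x},\frac{\D}{\D y},\frac{\D}{\D z}\). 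This yields \(2y\N^{n}\mb f+2nx\N^{n-1}\mb f\) along \(\frac{\D}{\D x}\), the combination \(z\N^{n}\mb f-n(n-1)x\N^{n-2}\mb f\) along \(\frac{\D}{\D y}\), and \(-2nz\N^{n-1}\mb f-2n(n-1)y\N^{n-2}\mb f\) along \(\frac{\D}{\D z}\).

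Next I would simplify the Pochhammer/\(\kappa\) prefactors on the right-hand side of \eqref{NfM}. Using \(\kappa_{l,i}=\tfrac{i!}{(i-l)!}\) one checks \(\frac{\kappa_{n,\omega_f+2}}{\kappa_{n+1,\omega_f+2}}=\frac{1}{\omega_f+2-n}\) and \(\frac{\kappa_{n,\omega_f+2}}{\kappa_{n-1,\omega_f+2}}=\omega_f+3-n\), so that the three claimed coefficients collapse to \(\frac{2}{\omega_f+2}\N^{n+1}z\mb f\), \(\frac{\omega_f-2n+2}{\omega_f+2}\N^{n}z\mb f\) and \(-\frac{2n(\omega_f+3-n)}{\omega_f+2}\N^{n-1}z\mb f\), respectively. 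Each \(\N^k z\mb f\) I would expand by the Leibniz rule for the derivation \(\N\), using \(\N z=2y\), \(\N^2 z=2x\) and \(\N^j z=0\) for \(j\ge 3\), which gives \(\N^k z\mb f=z\N^k\mb f+2ky\N^{k-1}\mb f+k(k-1)x\N^{k-2}\mb f\).

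Matching the collected coefficients against these simplified expressions then reduces, for all three components at once, to the single polynomial identity
\[
z\,\N^{m}\mb{f}=(\omega_f-2m+2)\,y\,\N^{m-1}\mb{f}+(m-1)(\omega_f-m+2)\,x\,\N^{m-2}\mb{f},
\]
used at \(m=n-1,n,n+1\); I expect establishing this identity to be the main obstacle, since everything else is bookkeeping. I would prove it by induction on \(m\). The base case \(m=1\) reads \(z\,\N\mb f=\omega_f\,yf\) and follows from \(\M f=0\) (giving \(z\frac{\D f}{\D y}=-2y\frac{\D f}{\D x}\)) together with \({\rm H}f=\omega_f f\): indeed \(z\N f=x\bigl(z\frac{\D f}{\D y}\bigr)+2y\bigl(z\frac{\D f}{\D z}\bigr)=2y\bigl(z\frac{\D f}{\D z}-x\frac{\D f}{\D x}\bigr)=y\,{\rm H}f=\omega_f\,yf\).

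For the inductive step I would use that \(\N\) is a derivation with \(\N z=2y\), \(\N y=x\), \(\N x=0\), so that \(z\,\N^{m}\mb f=\N\bigl(z\N^{m-1}\mb f\bigr)-2y\N^{m-1}\mb f\) and \(\N(y\,g)=y\N g+xg\), \(\N(x\,g)=x\N g\); substituting the induction hypothesis for \(z\N^{m-1}\mb f\) and collecting the \(y\N^{m-1}\mb f\) and \(x\N^{m-2}\mb f\) terms reproduces the asserted coefficients after an elementary simplification of \((\omega_f-2m+4)+(m-2)(\omega_f-m+3)=(m-1)(\omega_f-m+2)\). Feeding this identity back into the three component equations confirms \eqref{NfM}. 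The degenerate cases \(n=0,1\), where \(\N^{-1},\N^{-2}\) or negative Pochhammer factors occur, are killed by the prefactors \(n\) and \(n(n-1)\) and are read off under the formal conventions of Notation \ref{not1}; the only genuinely delicate point is the \(\kappa\)-ratio bookkeeping and the sign in the \(\frac{\D}{\D z}\) component.
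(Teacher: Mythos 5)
Your proposal is correct, and it follows exactly the route the paper indicates: the corollary is stated as a consequence of Lemma \ref{NM}, and your derivation consists of substituting the explicit generators into that lemma and matching coefficients, with the only real content being the auxiliary identity \(z\,\N^{m}\mb{f}=(\omega_f-2m+2)\,y\,\N^{m-1}\mb{f}+(m-1)(\omega_f-m+2)\,x\,\N^{m-2}\mb{f}\), which your induction (base case from \(\M f=0\) and \({\rm H}f=\omega_f f\), step from the derivation property of \(\N\)) establishes correctly. Since the paper supplies no written proof of this corollary, your argument is a valid and complete filling-in of the intended deduction, and all three component matchings and the \(\kappa\)-ratio simplifications check out.
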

\begin{lem}\label{A.4}
For each \(l\in\mathbb{N}_0,\) the following equalities hold:
\bas
&\frac{\D}{\D x }\N^l= l(l-1) \N^{l-2}\frac{\D}{\D z}+ l\N^{l-1}\frac{\D}{\D y} + \N^l\frac{\D}{\D x},\qquad
\frac{\D}{\D y} \N^{l}= 2 l \N^{l-1}\frac{\D}{\D z} +\N^{l} \frac{\D}{\D y},\qquad
\frac{\D}{\D z} \N^l= \N^l \frac{\D}{\D z}.&
\eas
\end{lem}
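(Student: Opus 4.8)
The three asserted equalities are identities between differential operators acting on scalar functions, and the natural route is a short induction on $l$ driven by the elementary commutators of each partial derivative with $\N$. First I would record these commutators. Acting on scalar functions and using $\N = x\frac{\D}{\D y} + 2y\frac{\D}{\D z}$, a direct computation gives
\bes
\left[\frac{\D}{\D x},\N\right]=\frac{\D}{\D y},\qquad \left[\frac{\D}{\D y},\N\right]=2\frac{\D}{\D z},\qquad \left[\frac{\D}{\D z},\N\right]=0,
\ees
since only the summand $x\frac{\D}{\D y}$ carries an $x$-dependent coefficient, only $2y\frac{\D}{\D z}$ carries a $y$-dependent coefficient, and neither coefficient depends on $z$. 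These are precisely the three identities at $l=1$, which together with the trivial $l=0$ instances serve as the base cases; note that at $l=0,1$ the prefactors $l(l-1)$ and $2l$ annihilate the formal negative powers of $\N$ flagged in Notation \ref{not1}.

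The plan is to establish the three statements in order of increasing difficulty, feeding each into the next. The identity $\frac{\D}{\D z}\N^l = \N^l\frac{\D}{\D z}$ is immediate: since $\frac{\D}{\D z}$ commutes with $\N$, it commutes with every power $\N^l$. For the second identity I would induct on $l$, writing $\frac{\D}{\D y}\N^{l+1} = (\frac{\D}{\D y}\N^l)\N$, substituting the induction hypothesis, and then moving the trailing $\N$ inward past $\frac{\D}{\D z}$ (using $[\frac{\D}{\D z},\N]=0$) and past $\frac{\D}{\D y}$ (using $[\frac{\D}{\D y},\N]=2\frac{\D}{\D z}$); collecting terms produces the required coefficient $2(l+1)$. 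The first identity is handled the same way, now invoking all three commutators: from $\frac{\D}{\D x}\N^{l+1}=(\frac{\D}{\D x}\N^l)\N$ and the induction hypothesis, each of the three summands acquires a trailing $\N$, which is then commuted inward using the base relations.

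The only genuine bookkeeping — and the step I expect to require the most care — occurs in the inductive step for the first identity, where two distinct terms feed into the coefficient of $\N^{l-1}\frac{\D}{\D z}$: the term $l(l-1)\N^{l-2}\frac{\D}{\D z}$ absorbs $\N$ to give $l(l-1)\N^{l-1}\frac{\D}{\D z}$, while $l\N^{l-1}\frac{\D}{\D y}\N = l\N^{l-1}(\N\frac{\D}{\D y}+2\frac{\D}{\D z})$ contributes an extra $2l\N^{l-1}\frac{\D}{\D z}$. Verifying that these combine correctly reduces to the arithmetic identity $l(l-1)+2l = l(l+1)$, which yields exactly the coefficient $(l+1)l$ demanded at level $l+1$; likewise the $\frac{\D}{\D y}$-coefficient combines the $l$ coming from $\frac{\D}{\D y}\N=\N\frac{\D}{\D y}+\cdots$ applied to the second summand with the $1$ arising from $\frac{\D}{\D x}\N=\N\frac{\D}{\D x}+\frac{\D}{\D y}$ in the third summand, giving $l+1$. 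Once this coefficient matching is confirmed, all three identities close under the induction and the lemma follows.
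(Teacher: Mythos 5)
Your proof is correct and takes essentially the same route as the paper: an induction on $l$ driven by the elementary commutators $[\frac{\D}{\D x},\N]=\frac{\D}{\D y}$, $[\frac{\D}{\D y},\N]=2\frac{\D}{\D z}$, $[\frac{\D}{\D z},\N]=0$. In fact you supply more detail than the paper, which only writes out the inductive step for the second identity; your coefficient check $l(l-1)+2l=(l+1)l$ for the first identity is exactly the bookkeeping the paper leaves to the reader.
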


\begin{proof} The proof is by induction on \(l.\) For instance, by the induction hypothesis we have
\begin{eqnarray*}
&\frac{\D}{\D y}\N^{l+1}= 2l\N^{l-1}\frac{\D}{\D z}\N+ \N^{l} \frac{\D}{\D y} \N=2l\N^{l}\frac{\D}{\D z}+2\N^{l}\frac{\D}{\D z} + \N^{l}\N\frac{\D}{\D y}
=2(l+1)\N^{l}\frac{\D}{\D z}+\N^{l+1}\frac{\D}{\D y}.&
\end{eqnarray*}
This proves the second equality.
\end{proof}

\begin{lem}\label{A.1}
Let \(q=2s+r,\) where \(r=0\) or \(r=1\) and \(s \in\mathbb{N}_0.\) Then,
\ba\label{Nq1}
\N^q\mb{{z}^i}&=&\sum_{n=0}^s\eta^{q,i}_n{x}^{s-n}{y}^{r}{z}^{i-s-r-n}\Delta^n, \quad \N^q\mb{{z}^i}=\sum_{n=0}^{s}\zeta^{q,i}_n {x}^{s-n}{y}^{2n+r}{z}^{i-n-s-r},
\ea where
\ba\label{Nq11}
{\eta^{q,i}_n}&:=&\frac{(-1)^{n}(s)_{-1}^n(i)_{-1}^{s+n+r}(2i-1)_{-2}^{s}{2}^{s+n+r}}{n! (2i-1)_{-2}^{n}} \quad \hbox{ and }
\quad\zeta^{q,i}_n:={\frac{i!(2s+r)! {2}^{2n+r}}{(s-n)!(2n+r)!(i-n-s-r)!}}\quad
\ea for \(i\geq s+n+r,\) while \({\eta^{q,i}_n}:= \zeta^{q,i}_n:=0\) for \(i<q+n-s.\) In particular, \(\N^q\mb{{z}^i}=0\) when \(s<q-i.\)
\end{lem}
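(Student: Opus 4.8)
The plan is to prove both expansions in \eqref{Nq1} by induction on $q$, built on the single derivation identity
\[
\N(x^a y^b z^c)= b\,x^{a+1}y^{b-1}z^c+2c\,x^a y^{b+1}z^{c-1},
\]
together with the key observation that $\Delta=xz-y^2$ is annihilated by $\N$ (indeed $\N\Delta=-2xy+2xy=0$, exactly as $\M\Delta=0$), so that $\N$ slides through any factor $\Delta^n$ as a derivation. The base cases $q=0$ and $q=1$ are immediate from the closed forms \eqref{Nq11}, since $\zeta^{0,i}_0=\eta^{0,i}_0=1$ and $\zeta^{1,i}_0=\eta^{1,i}_0=2i$, reproducing $z^i$ and $\N\mb{z^i}=2iyz^{i-1}$.

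I would first establish the pure-monomial form (the $\zeta$ expansion). Applying $\N$ to a generic term $x^{s-n}y^{2n+r}z^{i-n-s-r}$ produces two monomials; since incrementing $q$ flips the parity $r$, the reindexing must be carried out separately in the cases $r=0\mapsto r=1$ and $r=1\mapsto r=0$. Collecting the terms that land on each target monomial reduces the inductive step to the Pascal-type recurrences $\zeta^{q+1,i}_{n}=2(n+1)\zeta^{q,i}_{n+1}+2(i-n-s)\zeta^{q,i}_{n}$ in the case $r=0$, and $\zeta^{q+1,i}_{n}=(2n+1)\zeta^{q,i}_{n}+2(i-n-s)\zeta^{q,i}_{n-1}$ in the case $r=1$. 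Each is verified directly from the factorial closed form for $\zeta^{q,i}_n$ by elementary cancellation of factorials and powers of $2$.

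For the $\Delta$-graded form (the $\eta$ expansion) I would run a parallel induction exploiting $\N\Delta=0$, so that applying $\N$ to $x^{s-n}y^{r}z^{i-s-r-n}\Delta^n$ differentiates only the prefactor. When $r=0$ the $y^{-1}$ contribution is killed by its coefficient $r$, leaving the diagonal recurrence $\eta^{q+1,i}_n=2(i-s-n)\eta^{q,i}_n$; when $r=1$ the prefactor generates a $y^2$, which I rewrite as $y^2=xz-\Delta$, thereby splitting the contribution between the indices $n$ and $n+1$ and producing the two-term recurrence $\eta^{q+1,i}_{n}=\bigl(2(i-s-n)-1\bigr)\eta^{q,i}_{n}-2(i-s-n)\eta^{q,i}_{n-1}$. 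Both recurrences are then confirmed from the Pochhammer closed form in \eqref{Nq11}. As an alternative (and a cross-check) the $\eta$ form can be obtained from the already-proven $\zeta$ form by expanding $\Delta^n=(xz-y^2)^n$ binomially and matching monomials, which reduces to the combinatorial identity $\zeta^{q,i}_m=(-1)^m\sum_{n=m}^{s}\binom{n}{m}\eta^{q,i}_n$.

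The vanishing statements are then read off from the exponent of $z$: the highest power occurring is $z^{i-s-r}$ (the $n=0$ term) and it only decreases as $n$ grows, so each coefficient is declared $0$ precisely when its $z$-exponent $i-n-s-r$ is negative, and the entire expression vanishes once even the top term has a negative exponent, that is when $i<s+r$, equivalently $s<q-i$. I expect the main obstacle to be the bookkeeping in the parity-changing step $r=1\mapsto r=0$, where the substitution $y^2=xz-\Delta$ forces an index shift and mixes two adjacent coefficients, together with the Pochhammer algebra required to show that the closed forms satisfy the two-term recurrence; once these recurrences are secured, the induction itself is routine.
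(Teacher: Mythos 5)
Your proof is correct and follows exactly the route the paper takes: the paper's entire proof is the one line ``the proof is straightforward by an induction on \(q\),'' and your induction on \(q\) (with the parity-split recurrences for \(\zeta^{q,i}_n\) and \(\eta^{q,i}_n\), the use of \(\N\Delta=0\), and the substitution \(y^2=xz-\Delta\) in the \(r=1\mapsto r=0\) step) supplies precisely the details that the paper leaves implicit; I checked the recurrences against the closed forms and against low-degree cases and they are all consistent.
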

\bpr
The proof is straightforward by an induction on \(q.\)
\epr Lemma \ref{A.1} enables us to formulate \(\N^{q_1}\mb{{z}^i}\N^{q_2}\mb{{z}^j}\) in terms of \(x, y, z\) and \(\Delta.\)
\begin{prop}\label{dnqnd}
Let \(q_1=2s_1+r_1\) and \(q_2=2s_2+r_2,\) where \(r_1, r_2 \in\{0, 1\}\) and  \(s_1, s_2\in \mathbb{N}_0\). Then,
\ba\label{Nqq1Nqq1}
&\N^{q_1}\mb{{z}^i}\N^{q_2}\mb{{z}^j}=\sum_{n=0}^{\min\{s_1+s_2-\sigma_2, s_1+s_2\}}
\tilde{\eta}^{q_1,q_2}_{n,i,j}\frac{{x}^{s_{{1}}+s_{{2}}}{y}^{r_{{1}}+r_{{2}}}{z}^{i+j}}{{x^nz^{s_1+s_2+r_1+r_2+n}}}{\Delta}^{n},&
\ea and
\bas
&\N^{q_1}\mb{{z}^i}\N^{q_2}\mb{{z}^j}= \sum_{n=0}^{\min\{s_1+s_2-\sigma_2, s_1+s_2\}}\tilde{\zeta}^{q_1,q_2}_{n,i,j}
\frac{x^{s_1+s_2}y^{2n+r_1+r_2}z^{i+j}}{x^nz^{s_1+s_2+r_1+r_2+n}}&
\eas hold where
\ba\label{z1}
\tilde{\zeta}^{q_1,q_2}_{n,i,j}&:=&\sum_{r=0}^n\zeta^{q_1,i}_r\zeta^{q_2,j}_{n-r}, \qquad \tilde{\eta}^{q_1,q_2}_{n,i,j}:={\sum_{r=0}^{n}\eta^{q_1,i}_r\eta^{q_2,j}_{n-r}}.
\ea Further, \(\N^{q_1}\mb{{z}^i}\N^{q_2}\mb{{z}^j}=0\) when either \(s_1<q_1-i\) or \(s_2<q_2-j.\)
In particular, \(\N^{q_1}\mb{{z}^i}\N^{q_2}\mb{{z}^j}=0\) for \(s_1+s_2< \sigma_2(q_1, q_2, i , j)=q_1+q_2-i-j.\)
\end{prop}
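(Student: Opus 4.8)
The plan is to substitute the two closed forms for $\N^q\mb{{z}^i}$ furnished by Lemma \ref{A.1} directly into the product and to recognize the resulting double sum as a Cauchy product. Starting from the pure $x,y,z$ expansions
\[
\N^{q_1}\mb{{z}^i}=\sum_{n_1=0}^{s_1}\zeta^{q_1,i}_{n_1}x^{s_1-n_1}y^{2n_1+r_1}z^{i-n_1-s_1-r_1},\qquad
\N^{q_2}\mb{{z}^j}=\sum_{n_2=0}^{s_2}\zeta^{q_2,j}_{n_2}x^{s_2-n_2}y^{2n_2+r_2}z^{j-n_2-s_2-r_2},
\]
their product is a double sum over $(n_1,n_2)$ whose general monomial is $x^{s_1+s_2-n_1-n_2}y^{2(n_1+n_2)+r_1+r_2}z^{i+j-(n_1+n_2)-(s_1+s_2)-(r_1+r_2)}$ with coefficient $\zeta^{q_1,i}_{n_1}\zeta^{q_2,j}_{n_2}$. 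First I would reindex by the single variable $n:=n_1+n_2$; since the monomial depends on $(n_1,n_2)$ only through $n$, collecting terms produces exactly the convolution $\tilde{\zeta}^{q_1,q_2}_{n,i,j}=\sum_{r=0}^{n}\zeta^{q_1,i}_r\zeta^{q_2,j}_{n-r}$ recorded in \eqref{z1}, where the convention that $\zeta^{q,i}_n$ vanishes outside its admissible range silently enforces $0\le n_1\le s_1$ and $0\le n_2\le s_2$. Writing the collected monomial as $x^{s_1+s_2}y^{2n+r_1+r_2}z^{i+j}/(x^n z^{s_1+s_2+r_1+r_2+n})$ gives the second displayed identity. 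The first identity \eqref{Nqq1Nqq1} follows in exactly the same manner, starting instead from the $\eta$-form of Lemma \ref{A.1} (the expansion carrying $\Delta^{n}$); there $\Delta^{n_1}\Delta^{n_2}=\Delta^{n}$, so the same reindexing yields the convolution $\tilde{\eta}^{q_1,q_2}_{n,i,j}$.

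Next I would pin down the range of $n$. The ranges of $n_1,n_2$ give $0\le n\le s_1+s_2$, which is the second entry of the minimum. The first entry comes from nonnegativity of the exponent of $z$, namely $i+j-n-(s_1+s_2)-(r_1+r_2)\ge 0$. Using $q_1+q_2=2(s_1+s_2)+(r_1+r_2)$ one verifies the arithmetic identity
\[
s_1+s_2-\sigma_2=s_1+s_2-(q_1+q_2-i-j)=i+j-(s_1+s_2)-(r_1+r_2),
\]
so this nonnegativity is precisely $n\le s_1+s_2-\sigma_2$. Combining the two bounds produces the upper index $\min\{s_1+s_2-\sigma_2,\,s_1+s_2\}$ appearing in the statement.

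The vanishing assertions then follow immediately. If $s_1<q_1-i$ then $\N^{q_1}\mb{{z}^i}=0$ by the final clause of Lemma \ref{A.1}, and symmetrically for the second factor, so in either case the product vanishes. For the ``in particular'' clause I would argue by contraposition: if both $s_1\ge q_1-i$ and $s_2\ge q_2-j$ held, then $i\ge s_1+r_1$ and $j\ge s_2+r_2$, hence $i+j\ge (s_1+s_2)+(r_1+r_2)$ and therefore $\sigma_2=2(s_1+s_2)+(r_1+r_2)-(i+j)\le s_1+s_2$; consequently $s_1+s_2<\sigma_2$ forces at least one of the strict inequalities $s_1<q_1-i$ or $s_2<q_2-j$, and the product is zero. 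The same conclusion is visible directly from the closed form, since $s_1+s_2<\sigma_2$ makes the upper index $s_1+s_2-\sigma_2$ negative and the sum empty.

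I do not expect a genuine obstacle here, as the argument is a substitution followed by a Cauchy-product reindexing. The only point demanding care is the bookkeeping of the summation limits: one must check that the convention $\zeta^{q,i}_n=\eta^{q,i}_n=0$ outside range makes the convolution legitimate and that the two independent upper bounds on $n$ collapse into the single minimum in \eqref{Nqq1Nqq1}.
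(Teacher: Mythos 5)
Your argument is correct and is exactly the route the paper intends: its proof of Proposition \ref{dnqnd} simply says the result is immediate from Lemma \ref{A.1}, and your substitution, Cauchy-product reindexing over $n=n_1+n_2$, and bookkeeping of the upper limit $\min\{s_1+s_2-\sigma_2,\,s_1+s_2\}$ and the vanishing clauses supply precisely the omitted details. No discrepancy with the paper's approach.
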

\bpr
Given Lemma \ref{A.1}, the proof is trivial.
\epr

The following theorem provides an alternative formula for the expansion of \(\N^{q_1}\mb{{z}^i}\N^{q_2}\mb{{z}^j}\).

\begin{thm}\label{A.3}
Let \(q_1=2s_1+r_1,\) \(q_2=2s_2+r_2\) and \(\sigma_2(q_1, q_2, i , j)=q_1+q_2-i-j.\) Then,
\ba\label{nn}
\N^{q_1}\mb{{z}^i}\N^{q_2}\mb{{z}^j}&=&
\sum_{p=\max \{\sigma_2, 0\}}^{s_1+s_2+\lfloor\frac{r_1+r_2}{2}\rfloor}C_{p,i,j}^{q_1,q_2}\N^{2p+|r_2-r_1|}
\mb{{z}^{2p-\sigma_2+|r_2-r_1|}}\Delta^{s_1+s_2-p+{\lfloor\frac{r_1+r_2}{2}\rfloor}},
\ea where \(C^{q_1,q_2}_{p,i,j}\) is given by
\ba\label{Cq12pij}
&\sum_{r=0}^{s_1+s_2+\lfloor\frac{r_1+r_2}{2}\rfloor-p}\frac{\big(\tilde{\eta}^{q_1,q_2}_{r,i,j}-\lfloor\frac{r_1+r_2}{2}\rfloor\tilde{\eta}^{q_1,q_2}_{r-1,i,j}\big)
{(p+1)^{{s_1+s_2-p-r+\lfloor\frac{r_1+r_2}{2}\rfloor}}_1(p-\sigma_2+1-|r_2+r_1|)_1^{{s_1+s_2-p-r+\lfloor\frac{r_1+r_2}{2}\rfloor}} }}{\eta^{2p+|r_2-r_1|,2p-\sigma_2}_0
(s_1+s_2-p-r)!{2^{p+r-(s_1+s_2+\lfloor\frac{r_1+r_2}{2}\rfloor)}}        		(4p-2\sigma_2+3-4\lfloor\frac{r_1+r_2}{2}\rfloor)_2^{s_1+s_2-p-r
+\lfloor\frac{r_1+r_2}{2}\rfloor}}.&
\ea
\end{thm}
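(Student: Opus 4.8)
The statement is a change of basis for a single homogeneous polynomial: Proposition \ref{dnqnd} already expands the product \(\N^{q_1}\mb{z^i}\N^{q_2}\mb{z^j}\) in the monomial basis \(x^{a}y^{r_1+r_2}z^{b}\Delta^{n}\), whereas \eqref{nn} re-expresses it in the basis \(\{\N^{Q}\mb{z^a}\Delta^b\}\) adapted to the \(\Sl\)-module structure, so the plan is to set up and invert the triangular transition between the two. First I would record the structural constraints that force the shape of the right-hand side of \eqref{nn}. The product is homogeneous of degree \(i+j\) and, since \([{\rm H},\N]=-2\N\) and \({\rm H}\) is a derivation, it is an \({\rm H}\)-eigenvector of eigenvalue \(-2\sigma_2\) with \(\sigma_2=q_1+q_2-i-j\). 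Because \(\ker{\rm M}=\langle\Delta,z\rangle\) and \(\Delta\) is an \(\Sl\)-invariant (\(\N\Delta={\rm M}\Delta=0\)), the space of degree-\((i+j)\), weight-\((-2\sigma_2)\) polynomials is spanned by the \(\N^{Q}\mb{z^a}\Delta^b\) with \(a+2b=i+j\) and \(Q=a+\sigma_2\); writing \(a=2p-\sigma_2+|r_2-r_1|\) and \(b=s_1+s_2-p+\lfloor\frac{r_1+r_2}{2}\rfloor\) reproduces exactly the summation index \(p\) and its range \(\max\{\sigma_2,0\}\le p\le s_1+s_2+\lfloor\frac{r_1+r_2}{2}\rfloor\), the lower limit being where \(Q\le 2a\) first holds and the upper limit being the top component \(b=0\) whose highest weight vector is \(z^{i+j}\). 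This Clebsch--Gordan picture establishes the form of \eqref{nn} (and its uniqueness), reducing the theorem to computing the coefficients \(C^{q_1,q_2}_{p,i,j}\).

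Next I would reduce the \(y\)-degree of the Proposition \ref{dnqnd} expansion to the parity \(|r_2-r_1|\) carried by the target iterates \(\N^{2p+|r_2-r_1|}\mb{\cdot}\). When \(r_1=r_2=1\) one has \(y^{r_1+r_2}=y^{2}=xz-\Delta\), so each monomial \(x^{s_1+s_2-n}y^{2}z^{\cdots}\Delta^{n}\) splits into an \(xz\)-part and a \(-\Delta\)-part, the latter shifting the \(\Delta\)-power by one. It is precisely this substitution that, after recollecting coefficients, is \emph{responsible} for the weighted difference \(\tilde\eta^{q_1,q_2}_{r,i,j}-\lfloor\frac{r_1+r_2}{2}\rfloor\,\tilde\eta^{q_1,q_2}_{r-1,i,j}\) in \eqref{Cq12pij}; when \(\lfloor\frac{r_1+r_2}{2}\rfloor=0\) no substitution is needed and only \(\tilde\eta^{q_1,q_2}_{r,i,j}\) survives. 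After this step the product is a combination of monomials \(x^{\cdots}y^{|r_2-r_1|}z^{\cdots}\Delta^{\cdots}\) whose \(y\)-degree now matches that of the basis elements.

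It then remains to invert Lemma \ref{A.1}. For each fixed parity the coefficients \(\eta^{q,i}_n\) present \(\N^{Q}\mb{z^a}\) as a triangular combination of the monomials \(x^{s-m}y^{r}z^{\cdots}\Delta^{m}\), so the transition matrix is invertible and one may solve for the monomials appearing after the \(y\)-reduction in terms of the \(\N^{Q}\mb{z^a}\Delta^b\); the leading (\(\Delta^{0}\)) term enters as the normalization \(\eta^{2p+|r_2-r_1|,\,2p-\sigma_2}_0\) in the denominator of \eqref{Cq12pij}. Substituting the coefficients of Proposition \ref{dnqnd} and collecting the coefficient of a fixed \(\N^{2p+|r_2-r_1|}\mb{z^{2p-\sigma_2+|r_2-r_1|}}\Delta^{s_1+s_2-p+\lfloor\frac{r_1+r_2}{2}\rfloor}\) produces the sum over \(r\). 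The main obstacle is this last bookkeeping step: turning the inverse of the triangular \(\eta\)-system into the stated closed form with the specific factors \((p+1)^{\cdots}_1\), \((p-\sigma_2+1-|r_2+r_1|)^{\cdots}_1\), the power of \(2\), and \((4p-2\sigma_2+3-4\lfloor\frac{r_1+r_2}{2}\rfloor)^{\cdots}_2\). I expect this is cleanest by a downward induction on \(p\) (equivalently on the \(\Delta\)-power \(b\)): match the top component \(p=s_1+s_2+\lfloor\frac{r_1+r_2}{2}\rfloor\) by reading off the coefficient of the highest monomial, subtract \(C^{q_1,q_2}_{p,i,j}\,\N^{\cdots}\mb{\cdot}\,\Delta^{b}\), and repeat, the Pochhammer identities for \((\cdot)^{k}_b\) collapsing the accumulated products at each stage; uniqueness from the first paragraph guarantees the coefficients so obtained are the claimed ones.
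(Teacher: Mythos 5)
Your proposal follows essentially the same route as the paper: expand the left side via Proposition \ref{dnqnd} and the right side via Lemma \ref{A.1}, use \(y^{2}=xz-\Delta\) in the case \(r_1=r_2=1\) to produce the differences \(\tilde{\eta}^{q_1,q_2}_{r,i,j}-\tilde{\eta}^{q_1,q_2}_{r-1,i,j}\), and solve the resulting upper-triangular linear system (with nonzero diagonal entries \(\eta^{\cdot,\cdot}_{0}\)) for the coefficients \(C^{q_1,q_2}_{p,i,j}\). Your opening \(\Sl\)-weight-space justification for the shape and uniqueness of the expansion is a welcome addition that the paper leaves implicit, but the core argument is the same.
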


\bpr A polynomial expansion for the left hand side in \eqref{nn} is derived in equation \eqref{Nqq1Nqq1} while by using the first equation in \eqref{Nq1}, the
right hand side is given by
\bas
&\sum_{p=\max\{\sigma_2,0\}}^{s_1+s_2+\lfloor\frac{r_1+r_2}{2}\rfloor}C_{p,i,j}^{q_1,q_2}
\sum_{n=0}^{p}\eta^{2p+|r_2-r_1|,2p-\sigma_2+|r_2-r_1|}_{n}{x}^{p-n}{y}^{|r_{{2}}-r_{{1}}|}{z^{p-\sigma_2-n}}{\Delta}^{s_1+s_2-p+n+\lfloor\frac{r_1+r_2}{2}\rfloor}.&
\eas Given Lemma \ref{A.1}, we remark that \(\N^{2p+|r_2-r_1|}\mb{{z}^{2p-\sigma_2+|r_2-r_1|}}=0\) for \(p<\sigma_2.\)

When \(r_1r_2=0,\) equation \eqref{nn} is equivalent with the following polynomial equation
\bas
&\sum_{n=0}^{\min\{s_1+s_2-\sigma_2, s_1+s_2\}}\tilde{\eta}^{q_1,q_2}_{n,i,j}{x}^{s_{{1}}+s_{{2}}-n}{z}^{i+j-(s_{{1}}+s_{{2}}+r_{{1}}+r_{{2}}+n)}\Delta^{n}&
\\&=\sum_{n=0}^{\min\{s_1+s_2-\sigma_2, s_1+s_2\}}\sum_{p=s_1+s_2-n}^{s_1+s_2}
C_{p,i,j}^{q_1,q_2}\eta^{2p+|r_2-r_1|,2p+i+j-2(s_1+s_2)}_{n+p-(s_1+s_2)}{x}^{s_1+s_2-n}{z^{i+j-(s_1+s_2+r_1+r_2+n)}}{\Delta}^{n}.&
\eas Hence for each \(i, j, s_1, s_2,\) and \(0\leq n\leq \min\{s_1+s_2-\sigma_2, s_1+s_2\},\) we have
\bes
\tilde{\eta}^{q_1,q_2}_{n,i,j}
=\sum_{k=0}^{n}\eta^{q_1+q_2-2k,i+j-2k}_{n-k}C_{s_1+s_2-k,i,j}^{q_1,q_2}.\ees
These introduce a family of upper triangular linear matrix equations. The determinant of the coefficient matrix is given by
\(\prod_{n=0}^{\min\{s_1+s_2-\sigma_2, s_1+s_2\}}\eta^{q_1+q_2-2n,i+j-2n}_{0}\neq0.\) These together with the first equation in \eqref{Nq11} give rise to
\bas
&C_{p,i,j}^{q_1,q_2}=\sum_{r=0}^{s_1+s_2-p}\frac{\tilde{\eta}^{q_1,q_2}_{r,i,j}{(p+1)^{{s_1+s_2-p-r+}}_1(p-\sigma_2+1-|r_2+r_1|)_1^{{s_1+s_2-p-r}}}}
{\eta^{2p+|r_2-r_1|,2p-\sigma_2}_0(s_1+s_2-p-r)!{2^{p+r-(s_1+s_2)}}(4p-2\sigma_2+3)_2^{s_1+s_2-p-r}}.&
\eas
Now let \(r_1=r_2=1.\) By substituting \(y^2=xz-\Delta\) into equation \eqref{Nqq1Nqq1}, \(\N^{q_1}\mb{{z}^i}\N^{q_2}\mb{{z}^j}\) is given by
\bas
&\sum_{n=1}^{\min\{s_1+s_2-\sigma_2,s_1+s_2\}}(\tilde{\eta}^{q_1,q_2}_{n,i,j}-\tilde{\eta}^{q_1,q_2}_{n-1,i,j}){x}^{s_{{1}}
+s_{{2}}-n+1}{z}^{i+j-s_{{1}}-s_{{2}}-n-1}{\Delta}^{n}+\tilde{\eta}^{q_1,q_2}_{0,i,j}{x}^{s_{{1}}+s_{{2}}+1}{z}^{i+j-s_{1}-s_{2}-1}&\\&
-\tilde{\eta}^{q_1,q_2}_{\min\{s_1+s_2-\sigma_2,s_1+s_2\},i,j}x^{\max\{0, \sigma_2\}}{z}^{\max\{-\sigma_2,0\}}{\Delta}^{s_1+s_2+1}.
\qquad\qquad\qquad\qquad\qquad\qquad\qquad\qquad\quad&
\eas Hence the family of linear equations and its solutions are derived by
\bas
&\tilde{\eta}^{q_1,q_2}_{n,i,j}-\tilde{\eta}^{q_1,q_2}_{n-1,i,j}=\sum_{k=0}^n  \eta^{q_1+q_2-2k,i+j-2k}_{n-k}C_{s_1+s_2-k,i,j}^{q_1,q_2}, \qquad \hbox{ and } \qquad &
\\&C^{q_1,q_2}_{n,i,j}:=\sum_{r=0}^{s_1+s_2+1-n}\frac{\big(\tilde{\eta}^{q_1,q_2}_{r,i,j}-\tilde{\eta}^{q_1,q_2}_{r-1,i,j}\big)
{(n+1)^{{s_1+s_2-n-r+1}}_1(n-\sigma_2-1)_1^{{s_1+s_2-n-r+1}} }}{(s_1+s_2-n-r)!{2^{n+r-(s_1+s_2+1)}\eta^{2n,2n-\sigma_2}_0}(4n-2\sigma_2-1)_2^{s_1+s_2-n-r+1}},&
\eas respectively.
\epr

\begin{lem} The formal power series ring \(\mathbb{R}[[z, \Delta]]\) is the ring of invariants for \(\M.\)
\end{lem}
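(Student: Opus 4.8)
The plan is to prove the two inclusions separately, after reducing everything to a single homogeneous degree and then invoking the \(\Sl\)-module structure already set up in \eqref{Triad}. First I would record the easy inclusion. Since \(\M\) is a vector field it acts as a derivation, so \(\ker\M\) is a subalgebra containing the constants, and it suffices to check the two proposed generators. A direct computation gives \(\M z = z\frac{\D}{\D y}(z)+2y\frac{\D}{\D x}(z)=0\) and \(\M\Delta = z(-2y)+2y(z)=0\), so \(z,\Delta\in\ker\M\) and therefore \(\mathbb{R}[[z,\Delta]]\subseteq\ker\M\). Because \(\M\) preserves total degree, a formal series \(f=\sum_d f_d\) lies in \(\ker\M\) iff every homogeneous part \(f_d\) does; thus it is enough to prove in each degree \(d\) that \(\ker\M\cap\mathbb{R}[x,y,z]_d=\mathbb{R}[z,\Delta]_d\), and then sum over \(d\) and pass to the formal completion.

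For the reverse inclusion I would use \(\Sl\)-representation theory on the finite-dimensional module \(\mathbb{R}[x,y,z]_d\). With respect to \({\rm H}=2z\frac{\D}{\D z}-2x\frac{\D}{\D x}\) the variables \(x,y,z\) carry weights \(-2,0,2\), and since \([{\rm H},\M]=2\M\) the operator \(\M\) is the raising operator; hence on any irreducible summand \(\ker\M\) is the one-dimensional highest-weight line, and \(\ker\M|_d\) is spanned by exactly one highest-weight vector per summand. The degree-one piece \(\langle x,y,z\rangle\) is the irreducible three-dimensional module \(V_2\) (with \(z\) its highest-weight vector, \(\M z=0\)), so \(\mathbb{R}[x,y,z]_d=\mathrm{Sym}^d V_2\), and Clebsch--Gordan gives \(\mathrm{Sym}^d V_2\cong\bigoplus_{b=0}^{\lfloor d/2\rfloor}V_{2d-4b}\).

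It remains to match this decomposition with the monomials \(z^a\Delta^b\). For each \(b\) with \(a=d-2b\ge0\), the element \(z^a\Delta^b\) lies in \(\ker\M\) and has \({\rm H}\)-weight \(2a=2(d-2b)\); being nonzero it is a highest-weight vector of the summand \(V_{2d-4b}\). There are exactly \(\lfloor d/2\rfloor+1\) such monomials, matching the number of summands, and they are linearly independent because their leading monomials \(x^b z^{a+b}\) (obtained by setting \(y=0\)) are pairwise distinct. Hence \(\{z^{d-2b}\Delta^b\}\) is a basis of \(\ker\M|_d\), so \(\ker\M|_d=\mathbb{R}[z,\Delta]_d\), and summing over \(d\) gives \(\ker\M=\mathbb{R}[[z,\Delta]]\), recovering the statement of \cite[Proposition 2]{CushSandCont}.

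The main obstacle is the structural step: one must know both the \(\mathrm{Sym}^d\) decomposition and that the explicit invariants \(z^a\Delta^b\) already exhaust the highest-weight lines. If one prefers to avoid citing Clebsch--Gordan, the same count follows by computing weight multiplicities directly: a monomial \(x^iy^jz^k\) of degree \(d\) has weight \(2(k-i)\), so the dimension of the weight-\(2\mu\) space is \(m_{d,\mu}=\lfloor(d-\mu)/2\rfloor+1\) for \(0\le\mu\le d\), and the number of highest-weight vectors of weight \(2\mu\) is \(m_{d,\mu}-m_{d,\mu+1}=1\) exactly when \(\mu\equiv d\pmod 2\), which is precisely the index set \((a,b)\) above. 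As a sanity check, the flow of \(\M\) solves \(\dot x=2y,\ \dot y=z,\ \dot z=0\), whose functionally independent first integrals are exactly \(z\) and \(\Delta=xz-y^2\), confirming that no further invariants can appear.
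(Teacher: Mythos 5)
Your proof is correct, but it takes a genuinely different route from the paper. The paper's argument is a generating-function count: it writes down the Hilbert--Poincar\'e series $\tfrac{1}{(1-u^2t)(1-t^2)}$ of $\mathbb{R}[[z,\Delta]]$ (with $t$ tracking degree and $u$ tracking ${\rm H}$-weight), applies the standard derivative-at-$u=1$ trick to convert a count of highest-weight vectors into a count of the dimensions of the irreducible modules they generate, observes that the result $\tfrac{1}{(1-t)^3}$ is the series of the full ring $\mathbb{R}[[x,y,z]]$, and then invokes \cite[Lemma 4.7.9]{MurdBook} to conclude that the proposed invariants exhaust $\ker\M$. You instead work degree by degree inside the finite-dimensional module $\mathrm{Sym}^d V_2$, identify $\ker\M$ with the span of the highest-weight lines, and match the $\lfloor d/2\rfloor+1$ monomials $z^{d-2b}\Delta^b$ (distinct weights, or distinct specializations at $y=0$, give independence) against the Clebsch--Gordan decomposition $\bigoplus_b V_{2d-4b}$ --- or, as you note, against the elementary weight-multiplicity count $m_{d,\mu}-m_{d,\mu+1}$, which is really the same bookkeeping the paper's generating function performs all at once. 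What your version buys is self-containedness: it needs no external lemma and makes explicit why each summand contributes exactly one invariant. What the paper's version buys is brevity and uniformity: the same generating-function template is reused in Theorem \ref{3DVects} to identify $\ker{\rm ad}_\M$ for vector fields, where a hand decomposition would be more laborious. Both arguments are sound; your weight-multiplicity variant is arguably the cleanest way to see the result without citing anything.
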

\begin{proof} Obviously, \(\mathbb{R}[[z, \Delta]]\) is an invariant ring for \(\M.\) We prove that this is actually the ring of invariants for \(\M\) using
generating functions; see \cite[Lemma 4.7.9]{MurdBook} and \cite{CushSandCont,CushSandsur,CushmanSanders,CushSandWh,SandBook}. The generating function for \(\mathbb{R}[[z, \Delta]]\) is given by
\be\label{GenFunc}
\frac{1}{(1-u^2t)(1-t^2)}.
\ee Here, \(z\) has an eigenvalue \(2\) while the eigenvalue for \(\Delta\) is \(0.\) Hence, differentiating with respect to
\(u\) at \(u=1\) leads to
\bas
&\frac{(1-u^2t)(1-t^2)+2u^2t(1-t^2)}{(1-u^2t)^2(1-t^2)^2}|_{u=1}=\frac{(1-t)(1-t^2)+2t(1-t^2)}{(1-t)^2(1-t^2)^2}= \frac{1}{(1-t)^3}.&
\eas The latter is the generating function for all formal power series associated with three variables. The proof is complete by a differential form version of
\cite[Lemma 4.7.9]{MurdBook}.
\end{proof}

\begin{thm}\label{3DVects} Let \(V= \Span \{{\rm N}^nz^i \Delta^k \mb{\frac{\D}{\D x}}, {\rm N}^nz^i \Delta^k \mb \M, {\rm N}^nz^i \Delta^k\mb {\rm E}\,| n, i, k\in \NZ\}\)
and
\bas
&\mathcal{K}=\Span \{z^i \Delta^k \frac{\D}{\D x}, z^i \Delta^k \M, z^i \Delta^k {\rm E}\,| i, k\in \NZ\},&
\eas where \(\NZ\) denotes nonnegative integers. Then, \(\mathcal{K}= \ker {\rm \ad}_\M\) and \(V\) is the set of all three dimensional formal vector fields.
\end{thm}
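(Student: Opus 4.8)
The plan is to regard the space $\mathscr{V}$ of all three dimensional formal vector fields as a module over the $\Sl$-triple $\{\M,\N,{\rm H}\}$ acting by $\ad$, and to exploit that $\M$ raises and $\N$ lowers the $\ad_{\rm H}$-weight by $2$. Grading $\mathscr{V}$ by total polynomial degree makes each graded piece a finite-dimensional $\Sl$-module, so complete reducibility applies degreewise. In this language the two assertions read: $\mathcal K$ is exactly the space of highest-weight vectors (that is, $\ker\ad_\M$), and the $\N$-strings emanating from a spanning set of that space fill the whole module (that is, $V=\mathscr{V}$). I would establish the first assertion by a direct computation and the second as a formal consequence of the module structure.

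First I would record the action of $\ad_\M$ on the coordinate frame, namely $[\M,\frac{\D}{\D x}]=0$, $[\M,\frac{\D}{\D y}]=-2\frac{\D}{\D x}$ and $[\M,\frac{\D}{\D z}]=-\frac{\D}{\D y}$. Combined with the Leibniz rule $[\M,fw]=(\M f)w+f[\M,w]$ and the identification $\ker\M=\mathbb{R}[[z,\Delta]]$ from the previous lemma, this shows that $v=f\frac{\D}{\D x}+g\frac{\D}{\D y}+h\frac{\D}{\D z}$ lies in $\ker\ad_\M$ if and only if
\[
\M h=0,\qquad \M g=h,\qquad \M f=2g .
\]
The inclusion $\mathcal K\subseteq\ker\ad_\M$ is then immediate: each generator $\frac{\D}{\D x}$, $\M$, and ${\rm E}=x\frac{\D}{\D x}+y\frac{\D}{\D y}+z\frac{\D}{\D z}$ solves this triangular system, and multiplying by an invariant $z^i\Delta^k\in\ker\M$ preserves it.

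For the reverse inclusion I would solve the system from the bottom up. Here the key structural fact, which I expect to be the main obstacle, is the identity $\ker\M\cap\IM\M=z\,\mathbb{R}[[z,\Delta]]$: among the invariants $z^i\Delta^k$ only those with $i\ge 1$ lie in $\IM\M$, the pure powers $\Delta^k$ being the highest-weight vectors of the trivial $\Sl$-summands and hence outside the image. Solvability of $\M g=h$ therefore forces $h\in z\,\mathbb{R}[[z,\Delta]]$, which is precisely the shape of the $h$-component of the ${\rm E}$-tower; subtracting the matching ${\rm E}$-combination reduces to the case $h=0$, where solvability of $\M f=2g$ forces the residual $g\in\ker\M\cap\IM\M=z\,\mathbb{R}[[z,\Delta]]$, matched by the $\M$-tower; the final residual $f\in\ker\M$ is matched by the $\frac{\D}{\D x}$-tower. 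This triangular elimination terminates at $0$, giving $\ker\ad_\M\subseteq\mathcal K$ and hence $\mathcal K=\ker\ad_\M$. Alternatively, one may confirm $\mathcal K=\ker\ad_\M$ by the generating-function dimension count used in the preceding lemma, comparing the character of $\ker\ad_\M$ with that of $\mathbb{R}[[z,\Delta]]\cdot\{\frac{\D}{\D x},\M,{\rm E}\}$.

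Finally, for $V=\mathscr{V}$ I would argue structurally. By complete reducibility each graded piece of $\mathscr{V}$ decomposes into irreducible $\Sl$-modules, and each irreducible is the $\N$-string $\Span\{\N^n w\mid n\in\NZ\}$ below its highest-weight vector $w$, with $\N^n w=0$ once $n$ exceeds the string length. Since the highest-weight space is $\ker\ad_\M=\mathcal K$, a spanning set of highest-weight vectors is $\{z^i\Delta^k\frac{\D}{\D x},\, z^i\Delta^k\M,\, z^i\Delta^k{\rm E}\}$, and applying the lowering operator $\N$ to it spans all of $\mathscr{V}$. As this span is by definition $V$, we conclude $V=\mathscr{V}$, completing the proof.
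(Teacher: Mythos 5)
Your proposal is correct, and it reaches both conclusions by a route that is genuinely different in execution from the paper's, even though both ultimately rest on the \(\Sl\)-module structure of the space of formal vector fields. For \(\mathcal{K}=\ker\ad_\M\), the paper verifies the inclusion \(\mathcal{K}\subseteq\ker\ad_\M\) directly and then closes the gap by a generating-function dimension count, comparing \(\frac{3}{(1-u^2t)(1-t^2)}\) against the character of \(\mathcal{K}\); you instead solve the triangular system \(\M h=0\), \(\M g=h\), \(\M f=2g\) by elimination, using the identity \(\ker\M\cap\IM\M=z\,\mathbb{R}[[z,\Delta]]\) (correctly justified by the observation that the weight-zero highest-weight vectors \(\Delta^k\) span the trivial summands and so lie outside \(\IM\M\)). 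Your computation of the bracket relations \([\M,\frac{\D}{\D x}]=0\), \([\M,\frac{\D}{\D y}]=-2\frac{\D}{\D x}\), \([\M,\frac{\D}{\D z}]=-\frac{\D}{\D y}\) and of the resulting system checks out, and the elimination terminates as claimed. For \(V=\mathscr{V}\), the paper again counts via the generating function \(\frac{1}{(1-t)^3}\) after explicitly establishing linear independence of the three towers; you appeal to degreewise complete reducibility and the fact that each irreducible summand is the \(\N\)-string below its highest-weight vector. Your version buys an explicit algorithm for expressing a given element of \(\ker\ad_\M\) in the generators and makes the role of the trivial summands transparent; the paper's version buys brevity and, as a by-product of its independence claim, the uniqueness of the expansion in the three towers (which your spanning argument does not need for this theorem but which the paper uses later, e.g.\ in Lemma \ref{common}). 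One minor point of care: since the module is only degreewise finite-dimensional, complete reducibility and the string argument must be applied grade by grade and then assembled into formal sums, which you do flag by grading by total polynomial degree.
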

\bpr We first claim that the homogeneous polynomial set
\bas
&\{{\rm N}^nz^{i+1} \Delta^k \mb{\frac{\D}{\D x}}, {\rm N}^nz^i \Delta^k \mb \M, {\rm N}^nz^i \Delta^k \mb {\rm E}\,| n, i, k\in \NZ\}&
\eas is a linearly independent set. Since \([{\rm N}, f X]= {\rm N}\mb fX +f [{\rm N}, X]\) and \([{\rm N}, \M]=[{\rm N}, {\rm E}]=0,\) we imply that the set of polynomials
\({\rm N}^nz^i \Delta^k \mb \M\) and \({\rm N}^nz^i \Delta^k \mb {\rm E}\) for different \(n, i, k\) are linearly independent. On the other hand due to
\({\ad_{\rm N}}^3 \frac{\D}{\D x}= - {\ad_{\rm N}}^2\frac{\D}{\D y}= -2\,\ad_{\rm N}\frac{\D}{\D z}=0,\)
\bas
&{\rm N}^n z^{i+1} \Delta^k \mb{\frac{\D}{\D x}}= {\rm N}^n\mb z^{i+1}\Delta^k \frac{\D}{\D x}
-{\rm N}^{n-1}\mb z^{i+1}\Delta^k \frac{\D}{\D y}-2{\rm N}^{n-2}\mb z^{i+1}\Delta^k \frac{\D}{\D z}.&
\eas Then, the proof of our claim is complete by the linear independency of polynomials \({\rm N}^n\mb z^{i+1},\) \(2y{\rm N}^n\mb z^i\) and \(x{\rm N}^n\mb z^i.\)

Since \(\M\) commutes with \(\M,\) \({\rm E},\) \(\frac{\D}{\D x},\) and \(\M\mb z^i=0,\) \(\mathcal{K}\) is a subspace for \(\ker {\rm ad}_\M.\) By equation \eqref{GenFunc},
the generating function for \(\mathbb{R}[[z, \Delta]]\) is \(\frac{1}{(1-u^2t)(1-t^2)}\) and thus, the generating function for \(\ker {\rm ad}_\M\) is given by
\(\frac{3}{(1-u^2t)(1-t^2)}.\) This concludes that \(\mathcal{K}=\ker {\rm ad}_\M.\) Furthermore, \(V\) is the space of all three dimensional
vector fields with three variables and its generating function is given by \(\frac{1}{(1-t)^3}.\)
\epr

We define
\ba\label{bm}
{\rm B}^{l}_{i,k}:= \frac{ \N^{l+1}{{z}}^{i}\Delta^k\mb{\M}}{{\kappa_{l+1, 2i+2}}}, \qquad \hbox{for }\quad -1\leq l\leq 2i+1, i, k\in \mathbb{N}_0=\mathbb{N}\cup\{0\}.
\ea By taking \(f:={{z}}^{i}\Delta^k\) in equation \eqref{NfM}, \(\omega_f=2i\) and
\ba\label{Blik}
&{\rm B}^{l}_{i,k}=\frac{(2i-l+1) {\rm N}^{l+2}\mb{{{{z}}^{i+1}\Delta^k}}}{(i+1)\kappa_{l+2,2i+2}}\frac{\D}{\D {x}}
+\frac{(i-l){\rm N}^{l+1} \mb{{{{z}}^{i+1}\Delta^k}}}{(i+1)\kappa_{l+1,2i+2}}\frac{\D}{\D {y}}
-\frac{(l+1){\rm N}^{l}\mb{{{z}}^{i+1}\Delta^k}}{(i+1)\kappa_{l,2i+2}}\frac{\D}{\D {z}}.&
\ea Hence, \({\rm B}^1_{0, 0}:= -{\rm N}\) and \({\rm B}^{-1}_{0, 0}:= -{\rm M}.\) Now we introduce \(\B\) as the vector space spanned by all
nonlinear vector fields from this family with the nilpotent linear part \({\rm B}^1_{0,0}=\N\), \ie
\begin{eqnarray}\label{Bv}
\mathscr{B}:= \Span \left\{ {\rm B}^1_{0,0}+
\sum b^l_{i,k}{{\rm B}}^{l}_{i,k}\big|  -1\leqslant l \leqslant 2i+1, i\in \mathbb{N}, k\in\mathbb{N}_0, b^l_{i,k}\in{\mathbb{R}}\right\}.
\end{eqnarray}
Note that two more families of vector fields associated with Theorem \ref{3DVects} are defined in Appendix \ref{SecAppend}.

\begin{thm} Denote \(\sigma_1(s_1,s_2,k_1,k_2)\) and \(\sigma_2(q_1,q_2,i_1,i_2)\) for the indices defined in equations \eqref{sigma12}.
The vector space \(\B\) is a Lie algebra with structure constants given by
\ba\label{LieEq}
[{\rm B}^{q_1}_{i_1,k_1},{\rm B}^{q_2}_{i_2,k_2}]=
\sum_{j=\max \{\sigma_2-1, -1\}}^{s_1+s_2+{\lfloor\frac{r_1+r_2}{2}\rfloor}}{{a}}_{j,i_1,i_2}^{q_1,q_2}{\rm B}^{2j+|r_2-r_1|}_{2j-\sigma_2+|r_2-r_1|,\sigma_1-j
+{\lfloor\frac{r_1+r_2}{2}\rfloor}},
\ea where
\bas
&{{a}}_{j,i_1,i_2}^{q_1,q_2}=\frac{\sum^3_{p=1}\big({l}_{p,3,i_2,i_1}^{q_2,q_1}C_{j,i_2+1,i_1}^{q_2+3-p,{q_1}-3+p}
-{l}_{p,3,i_1,i_2}^{q_1,q_2}C_{j,i_1+1,i_2}^{q_1+3-p,{q_2}-3+p}\big)}
{{(2j+|r_2-r_1|+1)}(2j+1-\sigma_2+|r_2-r_1|)^{-1}{\kappa_{2j+|r_2-r_1|,4j-2\sigma_2+2+2|r_2-r_1|}}^{-1}},&
\eas for \(2j+1+|r_2-r_1|\neq0,\) while for \(j=-1,\) \(q_1=2s_1+1,\) and \(q_2=2s_2,\)
\ba\label{a-1}
&{{a}}_{-1,i_1,i_2}^{q_1,q_2}=|r_1-r_2|\frac{\sum^3_{p=1}\big({l}_{p,2,i_2,i_1}^{q_2,q_1}C_{0,i_2+1,i_1}^{q_2+3-p,{q_1}-2+p}
-{l}_{p,2,i_1,i_2}^{q_1,q_2}C_{0,i_1+1,i_2}^{q_1+3-p,{q_2}-2+p}\big)}{{\kappa_{0,-2\sigma_2}}^{-1}}. &
\ea Here the constants \(C^{q_1,q_2}_{p,i,j}\) follow equation \eqref{Cq12pij} and
\ba\label{L}
&{l}_{1,3,i_1,i_2}^{q_1,q_2}:=\frac{-(2i_1-q_1+1)({q_2}-1)_1^3 }{(i_1+1)\kappa_{{q_2},2i_2+2}\kappa_{q_1+2,2i_1+2}}, {l}_{2,3,i_1,i_2}^{q_1,q_2}
:=\frac{-2(q_2)_1^2(i_1-q_1)(i_1+1)^{-1}}{\kappa_{{q_2},2i_2+2}\kappa_{q_1+1,2i_1+2}},
{l}_{3,3,i_1,i_2}^{q_1,q_2}:=\frac{(q_1+1)({q_2}+1)(i_1+1)^{-1}}{\kappa_{{q_2},2i_2+2} \kappa_{q_1,2i_1+2}},&
\\\nonumber
&{l}_{1,2,i_1,i_2}^{q_1,q_2}:=\frac{-(q_2)_1^2(2i_1-q_1+1)(i_2-q_2) }{(i_1+1)\kappa_{{q_2+1},2i_2+2}\kappa_{q_1+2,2i_1+2}},
{l}_{2,2,i_1,i_2}^{q_1,q_2}:=\frac{-2q_2(i_1-q_1)(i_2-{q_2})(i_1+1)^{-1}}{\kappa_{{q_2+1},2i_2+2}\kappa_{q_1+1,2i_1+2}},
{l}_{3,2,i_1,i_2}^{q_1,q_2}:=\frac{(q_1+1)(i_2-{q_2})(i_1+1)^{-1}}{\kappa_{{q_2+1},2i_2+2} \kappa_{q_1,2i_1+2}}.
\ea
\end{thm}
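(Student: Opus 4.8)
The plan is to compute the vector-field commutator \([{\rm B}^{q_1}_{i_1,k_1},{\rm B}^{q_2}_{i_2,k_2}]\) directly and re-express it as a finite combination of the generators \eqref{bm}; this simultaneously shows that \(\B\) is closed under the bracket and produces the structure constants. I would first expand both generators into the coordinate frame via \eqref{Blik} of Corollary \ref{NnfM} with \(f={z}^i\Delta^k\in\ker{\rm M}\) and \(\omega_f=2i\). Each scalar component is then a rational constant times \({\rm N}^{q'}\mb{{z}^{i+1}\Delta^k}\), and since \({\rm N}(\Delta)=0\) it factors as \(\Delta^k{\rm N}^{q'}\mb{{z}^{i+1}}\), confining all \(\Delta\)-dependence to an inert prefactor.

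The decisive simplification is that every generator annihilates \(\Delta\): \({\rm B}^{-1}_{i,k}={z}^i\Delta^k{\rm M}\) gives \({\rm B}^{-1}_{i,k}(\Delta)=0\) because \({\rm M}(\Delta)=0\), and \(X(\Delta)=0\) is preserved by \(\ad_{\rm N}\) since \((\ad_{\rm N}X)(\Delta)={\rm N}(X(\Delta))\) whenever \({\rm N}(\Delta)=0\); induction along \eqref{bm} yields \({\rm B}^l_{i,k}(\Delta)=0\). Writing \(v=\sum_\alpha v_\alpha\frac{\D}{\D\alpha}\), \(w=\sum_\alpha w_\alpha\frac{\D}{\D\alpha}\), the \(\beta\)-component of \([v,w]\) is \(\sum_\alpha(v_\alpha\frac{\D}{\D\alpha}w_\beta-w_\alpha\frac{\D}{\D\alpha}v_\beta)\), and the product-rule terms hitting the \(\Delta^k\) prefactors drop out by \(v(\Delta^{k_2})=w(\Delta^{k_1})=0\). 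Because \(\frac{\D}{\D x}{z}^{i+1}=\frac{\D}{\D y}{z}^{i+1}=0\) while \(\frac{\D}{\D z}{z}^{i+1}=(i+1){z}^i\), Lemma \ref{A.4} collapses each \(\frac{\D}{\D\alpha}{\rm N}^b\mb{{z}^{i+1}}\) to a single multiple of \({\rm N}^{b-d_\alpha}\mb{{z}^i}\) with \(d_x=2,d_y=1,d_z=0\). Hence every component of \([v,w]\) equals \(\Delta^{k_1+k_2}\) times a three-term sum of products \({\rm N}^{(\cdot)}\mb{{z}^{i_1+1}}{\rm N}^{(\cdot)}\mb{{z}^{i_2}}\) minus its \((1\leftrightarrow2)\)-swap, and the constants that emerge are exactly the \({l}^{q_1,q_2}_{p,3,i_1,i_2}\) of \eqref{L}, indexed by \(p\leftrightarrow\alpha\in\{x,y,z\}\). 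I have checked the \(p=1\) case: \(v_x\frac{\D}{\D x}w_z\) produces precisely \({l}^{q_1,q_2}_{1,3,i_1,i_2}\,{\rm N}^{q_1+2}\mb{{z}^{i_1+1}}{\rm N}^{q_2-2}\mb{{z}^{i_2}}\).

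Specializing to the \(\frac{\D}{\D z}\)-component (\(c_\beta=0\)) gives the three products \({\rm N}^{q_1+3-p}\mb{{z}^{i_1+1}}{\rm N}^{q_2-3+p}\mb{{z}^{i_2}}\), \(p=1,2,3\). Applying Theorem \ref{A.3} rewrites each through the constants \(C^{q_1+3-p,q_2-3+p}_{j,i_1+1,i_2}\) as a sum over \(j\) of terms \({\rm N}^{2j+|r_2-r_1|}\mb{{z}^{2j-\sigma_2+|r_2-r_1|}}\Delta^{(\cdot)}\); folding in \(\Delta^{k_1+k_2}\), each is the \(\frac{\D}{\D z}\)-component of the single generator \({\rm B}^{2j+|r_2-r_1|}_{2j-\sigma_2+|r_2-r_1|,\,\sigma_1-j+\lfloor\frac{r_1+r_2}{2}\rfloor}\). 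Dividing by that generator's \(\frac{\D}{\D z}\)-coefficient \(-(2j+|r_2-r_1|+1)/\big((2j+1-\sigma_2+|r_2-r_1|)\kappa_{2j+|r_2-r_1|,4j-2\sigma_2+2+2|r_2-r_1|}\big)\) and collecting over \(p\), together with the swapped \({l}^{q_2,q_1}_{p,3,i_2,i_1}\)-terms from \(-w(v_z)\), reproduces \eqref{LieEq} with the stated \({a}^{q_1,q_2}_{j,i_1,i_2}\). The \(H\)-weight identity (\({\rm B}^q_{i,k}\) has weight \(2(i-q)\), so \([v,w]\) has weight \(-2\sigma_2\), matching the output block) and the manifest \(1\leftrightarrow2\) antisymmetry serve as running checks.

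I expect the main obstacle to be twofold. First, closure: one must confirm that the three coordinate components genuinely reassemble into \(\B\)-generators rather than leaking into the other two \(\Sl\)-families of Theorem \ref{3DVects}. The matching of the full \eqref{Blik}-ratio among the three components accomplishes this, and the structural facts that \([v,w]\) is again divergence-free and annihilates \(\Delta\) — so that \(\Sl\)-equivariance forces the result back into \(\B\) — corroborate it; since the clean characterization \eqref{eq1}--\eqref{eq2} is only proved later, the verification here is done by the explicit reassembly, with equivariance reducing the labor to the single \(\frac{\D}{\D z}\)-component. Second, the degenerate index \(j=-1\) with \(q_1,q_2\) of opposite parity (where \(2j+|r_2-r_1|+1=0\)) needs separate treatment: the output is then \({\rm B}^{-1}\), a pure \({\rm M}\)-multiple whose \(\frac{\D}{\D z}\)-component vanishes, so the coefficient must be read off the \(\frac{\D}{\D y}\)-component instead. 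This is the source of the \({l}_{p,2}\)-constants and the prefactor \(|r_1-r_2|\) in the separate formula \eqref{a-1}, and it forces one to re-run the product step through the \(r_1=r_2=1\) branch of Theorem \ref{A.3}.
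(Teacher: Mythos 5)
Your proposal is correct and follows essentially the same route as the paper: expand both generators via \eqref{Blik}, reduce each component of the commutator to sums \(\sum_p l_{p,\beta}\,{\rm N}^{(\cdot)}\mb{{z}^{i_1+1}}{\rm N}^{(\cdot)}\mb{{z}^{i_2}}\Delta^{k_1+k_2}\) using Lemma \ref{A.4}, convert these with Theorem \ref{A.3} into the \(C\)-coefficients, and read off \(a^{q_1,q_2}_{j,i_1,i_2}\) by dividing by the corresponding component of the output generator, with the second (\(\frac{\D}{\D y}\)) component supplying the degenerate \(j=-1\) case exactly as in \eqref{a-1}. The only difference is cosmetic: the paper verifies consistency by explicitly computing all three components (the \(l_{p,1},l_{p,2},l_{p,3}\) families), whereas you argue the same matching while supplying a clean inductive justification (\(\ad_{\rm N}\) preserves annihilation of \(\Delta\)) for why the \(\Delta^{k}\) prefactors pass through the product rule.
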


\bpr By equations \eqref{Blik}, \eqref{nn}, and Lemma \ref{A.4}, the third component of \({\rm B}^{q_1}_{i_1,k_1}{\rm B}^{{q_2}}_{i_2,{k_2}}\) is given by
\bas
&\sum^3_{p=1} \frac{{l}_{p,3,i_1,i_2}^{q_1,q_2}{{\rm N}^{q_1+3-p}\mb{{{z}}^{i_1+1}}}{\rm N}^{{q_2}-3+p}\mb{{z}^{i_2}}}{\Delta^{-k_1-k_2}}
=\sum_{j=\max \{\sigma_2-1, 0\}}^{s_1+s_2+\lfloor\frac{r_1+r_2}{2}\rfloor}\sum^3_{p=1}\frac{{l}_{p,3,i_1,i_2}^{q_1,q_2}C_{j,i_1+1,i_2}^{q_1+3-p,{q_2}-3+p}
\N^{2j+|r_2-r_1|}\mb{{z}^{2j-\sigma_2+1+|r_2-r_1|}}}{\Delta^{{j-\sigma_1-\lfloor\frac{r_1+r_2}{2}\rfloor}}},&
\eas where \({l}_{p,3,i_1,i_2}^{q_1,q_2}\) for \(p=1, 2, 3\) are defined by equations \eqref{L}. Now using the latter and equation \eqref{Blik},
the third component of \([{\rm B}^{q_1}_{i_1,k_1},{\rm B}^{q_2}_{i_2,k_2}]= {\rm B}^{q_1}_{i_1,k_1}{\rm B}^{q_2}_{i_2,k_2}
-{\rm B}^{q_2}_{i_2,k_2}{\rm B}^{q_1}_{i_1,k_1}\) is given by
\bas
\sum_{j=\max \{\sigma_2-1, -1\}}^{s_1+s_2+{\lfloor\frac{r_1+r_2}{2}\rfloor}}
&\frac{\sum^3_{p=1}{l}_{p,3,i_2,i_1}^{q_2,q_1}C_{j,i_2+1,i_1}^{q_2+3-p,{q_1}-3+p}
-\sum^3_{p=1}{l}_{p,3,i_1,i_2}^{q_1,q_2}C_{j,i_1+1,i_2}^{q_1+3-p,{q_2}-3+p}}{(2j+|r_2-r_1|+1)(2j+1-\sigma_2+|r_2-r_1|)^{-1}
(\kappa_{2j+|r_2-r_1|,4j-2\sigma_2+2+2|r_2-r_1|})^{-1}}{\rm B}^{2j+|r_2-r_1|}_{2j-\sigma_2+|r_2-r_1|,\sigma_1-j
+{\lfloor\frac{r_1+r_2}{2}\rfloor}}\cdot\mathbf{e}_3,&
\eas when \(2j+|r_2-r_1|+1\neq0.\) The second component of \({\rm B}^{q_1}_{i,k_1}{\rm B}^{q_2}_{j,k_2}\) is given by
\bas
&\sum^3_{p=1}{l}_{p,2,i_1,i_2}^{q_1, q_2}\Delta^{k_1+k_2}{{\rm N}^{q_1+3-p}\mb{{{z}}^{i_1+1}}}{\rm N}^{{q_2}-2+p}\mb{{z}^{i_2}}&
\\&=\sum_{j=\max \{\sigma_2-1, -1\}}^{s_1+s_2+\lfloor\frac{r_1+r_2}{2}\rfloor}\sum^3_{p=1}{l}_{p,2, i_1, i_2}^{q_1, q_2}
C_{j,i_1+1,i_2}^{q_1+3-p, {q_2}-2+p}\N^{2j+|r_2-r_1|+1}\mb{{z}^{2j-\sigma_2+|r_2-r_1|+1}}\Delta^{{\sigma_1-j+\lfloor\frac{r_1+r_2}{2}\rfloor}},&
\eas where the constants \({l}_{p,2, i_1, i_2}^{q_1, q_2}\) are defined by equations \eqref{L}. Again through the equation \eqref{Blik}, the second component of
         \([{\rm B}^{q_1}_{i_1,k_1},{\rm B}^{q_2}_{i_2,k_2}]\) is given by
\ba\label{BB2}
\sum_{j=\max \{\sigma_2-1, -1\}}^{s_1+s_2+{\lfloor\frac{r_1+r_2}{2}\rfloor}}
&\frac{\sum^3_{p=1}{l}_{p,2,i_1,i_2}^{q_1,q_2}C_{j+|r_2-r_1|,i_1+1,i_2}^{q_1+3-p,{q_2}-2+p}
-\sum^3_{p=1}{l}_{p,2,i_2,i_1}^{q_2,q_1}C_{j+|r_2-r_1|,i_2+1,i_1}^{q_2+3-p,{q_1}-2+p}}{\sigma_2(2j+1-\sigma_2+|r_2-r_1|)^{-1}
(\kappa_{2j+1+|r_2-r_1|,4j-2\sigma_2+2+2|r_2-r_1|})^{-1}}{\rm B}^{2j+|r_2-r_1|}_{2j-\sigma_2+|r_2-r_1|,\sigma_1-j
+{\lfloor\frac{r_1+r_2}{2}\rfloor}} \cdot \mathbf{e}_2.&
\ea On the other hand
\bes
{{a}}_{j,i_1,i_2}^{q_1,q_2}=\frac{\sum^3_{p=1}{l}_{p,2,i_1,i_2}^{q_1,q_2}C_{j+|r_2-r_1|,i_1+1,i_2}^{q_1+3-p,{q_2}-2+p}
-\sum^3_{p=1}{l}_{p,2,i_2,i_1}^{q_2,q_1}C_{j+|r_2-r_1|,i_2+1,i_1}^{q_2+3-p,{q_1}-2+p}}{\sigma_2(2j+1-\sigma_2+|r_2-r_1|)^{-1}
(\kappa_{2j+1+|r_2-r_1|,4j-2\sigma_2+2+2|r_2-r_1|})^{-1}}, \; \hbox{ when }\; 2j+1+|r_2-r_1|\neq0.
\ees We remark that the third component of \({\rm B}^{-1}_{-\sigma_2-1,\sigma_1+1}\) is always zero and this corresponds to
the condition \(2j+1+|r_2-r_1|=0.\) Indeed, this condition occurs when \(j=-1\) and
\bes (q_1:=2s_1 \hbox{ and } q_2:=2s_2+1) \quad\hbox{ or } \quad (q_1:=2s_1+1 \hbox{ and } q_2:=2s_2).\ees
Hence, the constant \({{a}}_{-1,i_1,i_2}^{q_1,q_2}\) are derived through equation \eqref{BB2}. Now the proof is complete by
derivation of the formula for the first component of \([{\rm B}^{q_1}_{i_1,k_1},{\rm B}^{q_2}_{i_2,k_2}]\) as
\bas
&{\sum_{j=\max \{\sigma_2-1, -1\}}^{s_1+s_2+{\lfloor\frac{r_1+r_2}{2}\rfloor}}}\frac{\sum^3_{p=1}{l}_{p,1,i_1,i_2}^{q_1,q_2}C_{j,i_1+1,i_2}^{q_1+3-p,{q_2}-1+p}
-\sum^3_{p=1}{l}_{p,1,i_2,i_1}^{q_2,q_1}C_{j,i_2+1,i_1}^{q_2+3-p,{q_1}-1+p}{\rm B}^{2j+|r_2-r_1|}_{2j-\sigma_2+|r_2-r_1|,\sigma_1-j
+{\lfloor\frac{r_1+r_2}{2}\rfloor}}}{(2j+1-2\sigma_2+|r_2-r_1|)(2j+1-\sigma_2+|r_2-r_1|)^{-1}
(\kappa_{2j+2+|r_2-r_1|,4j-2\sigma_2+2+2|r_2-r_1|})^{-1}}\cdot\mathbf{e}_1,&
\eas where
\bas
&{l}_{1,1,i_1,i_2}^{q_1,q_2}:=\frac{-({q_2}+1)_1^2(2i_1-q_1+1)(2i_2-q_2+1) }{(i_1+1)\kappa_{{q_2+2},2i_2+2}\kappa_{q_1+2,2i_1+2}},
{l}_{2,1,i_1,i_2}^{q_1,q_2}:=\frac{-2(q_2+2)(i_1-q_1)(2i_2-q_2+1)}{(i_1+1)\kappa_{{q_2+2},2i_2+2}\kappa_{q_1+1,2i_1+2}},
{l}_{3,1,i_1,i_2}^{q_1,q_2}:=\frac{(q_1+1)(2i_2-q_2+1)(i_1+1)^{-1}}{\kappa_{{q_2+2},2i_2+2} \kappa_{q_1,2i_1+2}},\qquad
\eas and the equality
\bas
&{{a}}_{j,i_1,i_2}^{q_1,q_2}=\frac{\sum^3_{p=1}{l}_{p,1,i_1,i_2}^{q_1,q_2}C_{j,i_1+1,i_2}^{q_1+3-p,{q_2}-1+p}
-\sum^3_{p=1}{l}_{p,1,i_2,i_1}^{q_2,q_1}C_{j,i_2+1,i_1}^{q_2+3-p,{q_1}-1+p}}{(2j+1-2\sigma_2+|r_2-r_1|)(2j+1-\sigma_2+|r_2-r_1|)^{-1}
(\kappa_{2j+2+|r_2-r_1|,4j-2\sigma_2+2+2|r_2-r_1|})^{-1}}.&
\eas
\epr

Now we illustrate the structure constants for a few examples.

\begin{exm}\label{SampleStructure}
The following examples are computed by using a Maple program:
\bas
{[{\rm B}^6_{8,3},{\rm B}^2_{5,2}]}&=&{\frac {1152}{785213}}{\rm B}^0_{5,9}+\frac{2560}{503217}{\rm B}^2_{7,8}
-\frac {4256}{38709}{\rm B}^4_{9,7}+{\frac{1384}{1683}}{\rm B}^6_{11,6}-{\frac {35}{9}}{\rm B}^8_{13,5},
\\
{[{\rm B}^7_{6,1},{\rm B}^3_{4,1}]}&=&-{\frac {512}{429429}}{\rm B}^ 2_{2,6} +
{\frac {512}{31603}}{\rm B}^ 4_{4,5} -{
\frac {43200}{323323}}{\rm B}^ 6_{6,4} +
{\frac {528}{637}}{\rm B}^ 8_{8,3}-{
\frac {132}{35}}{\rm B} ^ {10}_{10,2},
\\
{[{\rm B}^7_{5,6},{\rm B}^6_{7,8}]}&=&-{\frac {224}{347633}}{\rm B}^3_{2,19}
+{\frac {27440}{6605027}}{\rm B}^ 5_{4,18} -{\frac {1400}{138567}}
{\rm B}^ 7_{6,17 } -{\frac {18}{299}}{\rm B}^ 9_{8,16
} +{\frac {91}{100}}{\rm B}^ {11}_{10,15}-{\frac {143}{24}}{\rm B}^{ 13}_{12,14}.
\eas
Now we remark that \bes{{\rm B}}^{l}_{i,k}=\Delta^k{{\rm B}}^{l}_{i,0}\ees for all nonnegative  integers \(l, i, k\); this is due to
the equality \({\rm N}(\Delta)=0.\) Further recall that \(\Delta\) is invariant under the \(\Sl\)-action. Yet the following equality
demonstrates the complexity of the structure constants:
\bes
{[{\rm B}^3_{5,0},{\rm B}^4_{4,0}]}=\frac{256}{297297}{\rm B}^{-1}_{ 1, 4}-\frac{512}{42471}{\rm B}^1_{3,3}+
\frac{416}{3927}{\rm B}^3_{5,2}-\frac{1312}{1881}{\rm B}^5_{ 7, 1}+\frac{10}{3}{\rm B}^7_{9,0}.
\ees
Similar to \cite[equations (3.8a)-(3.8h)]{baidersanders}, we further present some Lie brackets that they are particularly useful
for our normal form results:
\ba\nonumber
&{[{\rm B}^{0}_{0,0},{\rm  B}_{i,k}^l]}=(l-i){\rm B}^{l}_{i,k}, \qquad {[{\rm B}^1_{0,0},{\rm  B}_{i,k}^l]}= (l-2i-1) {\rm B}^{l+1}_{i,k},
\qquad{[{\rm B}^{-1}_{0,0},{\rm  B}_{i,k}^l]}=(l+1){\rm B}^{l-1}_{i,k},&
\\\label{S3}
&[{\rm B}^{-1}_{p,0},{\rm B}^{q}_{i,k}]=	\sum_{j=\max \{q-2-i-p, -1\}}^{s-1+{\lfloor\frac{r+1}{2}\rfloor}}{{a}}_{j,p,i}^{-1,q}
{\rm B}^{2j+|r-1|}_{2j-q+1+i+p+|r-1|,s-1+k-j+{\lfloor\frac{r+1}{2}\rfloor}}.&
\ea

\end{exm}
\section{Poisson algebra structure}\label{Sec3}

We consider the ring of formal power series \(\mathbb{R}[[{x},{y},{z}]]\) and define a Poisson bracket on the ring's variables by
\be \label{ppl}
\{{x},{y}\}=  {x}, \quad \{{x},{z}\}= 2{y}, \quad\{{y},{z}\} = {z}.
\ee Since \(f\) and \(g\) from \(\mathbb{R}[[{x},{y},{z}]]\) have each a unique representation as formal power series in \({x},\) \({y},\)
and \({z},\) the Leibniz rule and bilinearity of the Poisson bracket are sufficient to uniquely determine Poisson structure for all elements
 in \(\mathbb{R}[[{x},{y},{z}]]\). In particular, the Poisson bracket is independent of the splitting functions in \(\mathbb{R}[[{x},{y},{z}]],\) i.e.,
\ba\label{poissonr}
\{f, g(hk)-(gh)k\}=0.
\ea
Indeed, by the Leibniz rule we have
\bas
&\{f, g (hk)\}- \{f, (gh) k\} = \{f, g\} hk+gk \{f, h\}+gh \{f, k\}-gh \{f, k\}-kg \{f, h\}-kh \{f, g\}= 0. &
\eas
Hence, the structure constants associated with monomials are given by
\ba\label{poissongenerat}
&\left\{ {{x}}^{{ i}}{{y}}^{j}{{z}}^{k},{{x}}^{m}{
{y}}^{n}{{z}}^{p} \right\} = \left( { i}n+jp-kn-jm
 \right) {{x}}^{{ i}+m}{{y}}^{n+j-1}{{z}}^{k+p}+ 2\left( { i}p-km \right) {{x}}^{{ i}+m-1}{{y}}
^{n+j+1}{{z}}^{k+p-1},&
\ea for arbitrary nonnegative integers \(m, n, p, i, j, k.\) Now define
\ba \label{pb}
\mathfrak{b}_{i,k}^l:=- \frac{\ad^{l+1}_{{x}}\mb{{{z}}^{i+1}\Delta^k}}{(i+1)\kappa_{l+1, 2i+2}}, \qquad \hbox{ for } -1\leq l\leq i+1, \hbox{ and } i, k\in \mathbb{N}_0,
\ea where \(\ad_{{x}} f:=\{{x}, f\}\) and \(\ad_{{x}}^n f:=\{{x}, \ad_{{x}}^{n-1}f\}\) for \(f\in \mathbb{R}[[x, y, z]]\) and \(n>1.\)

\begin{cor}\label{poisson}
The following formulas provide two alternative polynomial expansions for each \(\mathfrak{b}_{i,k}^{l}\) in terms of \(x,\) \(y,\) \(z\) and \(\Delta\):
\ba\label{bfrac}
&\mathfrak{b}_{i,k}^{2s}
=\sum_{j=0}^s\frac{-\eta^{2s+1,i+1}_j}{(i+1)\kappa_{2s+1,2i+2}}
{{x}}^{s-j}{{y}}{{z}}^{i-s-j}\Delta^{k+j},\quad
\mathfrak{b}_{i,k}^{2s}=\sum_{j=0}^{s}\frac{-\zeta^{2s+1,i+1}_j}{(i+1)\kappa_{2s+1,2i+2}} {{x}}^
{s-j}{{y}}^{2j+1}{{z}}^{i-j-s}\Delta^k, &
\\\nonumber
&\mathfrak{b}_{i,k}^{2s+1}= \sum_{j=0}^{s+1}\frac{-\eta^{2s+2,i+1}_j}{(i+1)\kappa_{2s+2,2i+2}}
{{x}}^{s-j+1}{{z}}^{i-s-j}\Delta^{k+j}, \quad
\mathfrak{b}_{i,k}^{2s+1}=\sum_{j=0}^{s+1}\frac{-\zeta^{2s+2,i+1}_j}{(i+1)\kappa_{2s+2,2i+2}}
{{x}}^{s-j+1}{{y}}^{2j}{{z}}^{i-j-s}\Delta^k,&
\ea where
\bas
&{\eta^{2s+1,i+1}_j}:=\frac{( -1) ^{j}(s)_{-1}^j(i+1)_{-1}^{s+j+1}(2i+1)_{-2}^{s}{2}^{s+j+1}}{(j)! (2i+2)_{-2}^{j}}, \quad
\zeta^{2s+1,i+1}_j:={\frac {(i+1)!(2s+1)! {2}^{2j+1}}{(s-j)! (2j+1)! (i-j-s)!}},&
\\
&{\eta^{2s+2,i+1}_j}:=\frac{( -1) ^{j}(s+1)_{-1}^j(i+1)_{-1}^{s+j+1}(2i+1)_{-2}^{s+1}{2}^{s+j+1}}{(j+1)! (2i+2)_{-2}^{j}},
\quad
\zeta^{2s+2,i+1}_j:={\frac { (i+1)!  ( 2s +2)!  {2}^{2j}}{ ( s-j+1)! ( 2j )! ( i-j-s) !}}.&
\eas
\end{cor}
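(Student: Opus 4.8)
The plan is to recognize that the inner derivation $\ad_{x}=\{x,\,\cdot\,\}$ acts on $\mathbb{R}[[x,y,z]]$ in exactly the same way as the differential operator $\N$, after which the corollary becomes a direct transcription of Lemma~\ref{A.1}. Both $\ad_{x}$ and $\N$ are derivations of $\mathbb{R}[[x,y,z]]$, and on the generators they agree: from \eqref{ppl} one has $\ad_{x}(x)=0$, $\ad_{x}(y)=x$ and $\ad_{x}(z)=2y$, which are precisely $\N(x)=0$, $\N(y)=x$ and $\N(z)=2y$. Since a derivation of a (formal power series) ring is determined by its values on the generators, I conclude $\ad_{x}=\N$ on all of $\mathbb{R}[[x,y,z]]$; alternatively, this identity can be read off by specializing the monomial bracket \eqref{poissongenerat} to the first factor $x$ and comparing with $\N(x^{m}y^{n}z^{p})$.

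Granting this identification, the definition \eqref{pb} reads $\mathfrak{b}_{i,k}^{l}=-\frac{\N^{l+1}\mb{z^{i+1}\Delta^{k}}}{(i+1)\kappa_{l+1,2i+2}}$. The next step is to strip off $\Delta^{k}$: since $\ad_{x}\Delta=\{x,xz-y^{2}\}=2xy-2xy=0$, the Leibniz rule kills every derivative that would land on $\Delta^{k}$, so $\ad_{x}^{l+1}\mb{z^{i+1}\Delta^{k}}=\Delta^{k}\,\N^{l+1}\mb{z^{i+1}}$. This reduces the computation of each $\mathfrak{b}_{i,k}^{l}$ to the single expression $\N^{l+1}\mb{z^{i+1}}$, to which Lemma~\ref{A.1} applies verbatim.

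Now I would split according to the parity of $l$. For $l=2s$ the exponent $l+1=2s+1$ is odd, so I take $q=2s+1$ (hence $r=1$) in Lemma~\ref{A.1}; for $l=2s+1$ the exponent $l+1=2s+2$ is even, so $q=2s+2$ (hence $r=0$). In each parity the first expansion in \eqref{Nq1}, which carries powers of $\Delta$, produces the $\eta$-form of the corollary, while the second, which carries powers of $y$, produces the $\zeta$-form; multiplying by $\Delta^{k}$, dividing by $-(i+1)\kappa_{l+1,2i+2}$, and relabeling the summation index as $j$ reproduces all four series displayed in \eqref{bfrac}. The closed coefficient formulas asserted in the statement are then exactly the specializations of \eqref{Nq11} under $q=l+1$, with the $z$-exponent $i$ replaced by $i+1$ and $n$ replaced by $j$.

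The conceptual content lies entirely in the first step; once $\ad_{x}=\N$ is established, everything else is substitution. I expect the only genuine obstacle to be bookkeeping: correctly matching the shifted indices (the extra $+1$ in the $z$-exponent and the $r=0/1$ alternation) between \eqref{Nq11} and the stated $\eta,\zeta$ coefficients, and tracking the summation ranges so that the purely formal ``negative power'' entries warned about in Notation~\ref{not1} coincide precisely with the terms annihilated by the vanishing conditions $\eta^{q,i}_{n}=\zeta^{q,i}_{n}=0$ of Lemma~\ref{A.1}.
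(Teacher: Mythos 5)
Your proof is correct and follows essentially the same route as the paper: the paper's own (very terse) proof likewise rests on the observation that $\ad_{x}$ and $\N$ act identically (they agree on the generators $x,y,z$ and both are derivations), after which the four expansions are read off from Lemma~\ref{A.1} with $q=l+1$ and $z$-exponent $i+1$. Your additional remarks --- that $\ad_{x}\Delta=0$ lets the factor $\Delta^{k}$ pass through, and the explicit parity bookkeeping $l=2s\Rightarrow q=2s+1,\ r=1$ versus $l=2s+1\Rightarrow q=2s+2,\ r=0$ --- merely spell out details the paper leaves implicit.
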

\bpr
Due to the previous lemma, the actions of \(\ad_{{x}}\) and \(\ad_{\N}\) on \({{z}}^{i}\) are identical. Hence, our claim readily follows from Lemma \ref{A.1}.
\epr
Now we define a vector space \(\mathfrak{B}\) as
\be\label{Bfrac}
\mathfrak{B}:=\Span\{\mathfrak{b}_{0,0}^1+\sum\beta^l_{i,k}\mathfrak{b}_{i,k}^l \big|  -1\leqslant l \leqslant 2i+1,  i,k \in\mathbb{N}_0 , \beta^l_{i,k}\in{\mathbb{R}}\}.
\ee The following two lemmas show that \(\mathfrak{B}\) is a Poisson algebra and it is Lie-isomorphic to \(\B.\)

\begin{lem}\label{iso}
The space \(\mathfrak{B}\) is invariant under the Poisson bracket and the linear map
\begin{eqnarray}
\label{fiso}
\Psi: \mathfrak{B} &\rightarrow&\mathscr{B}, \quad \hbox{ defined by }\quad
\Psi(\mathfrak{b}_{i,k}^{l})={\rm B}_{i,k}^{l},
\end{eqnarray} is a Lie isomorphism.
\end{lem}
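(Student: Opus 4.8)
The plan is to recognize the map $\Psi$ as the restriction to $\mathfrak{B}$ of the Hamiltonian map $\mathcal{X}$ sending a function $f$ to the vector field $\mathcal{X}(f):=\{f,\,\cdot\,\}$. The Jacobi identity for the Poisson bracket \eqref{ppl} is precisely the statement that $\mathcal{X}$ is a Lie-algebra homomorphism, $\mathcal{X}(\{f,g\})=[\mathcal{X}(f),\mathcal{X}(g)]$, from $(\mathbb{R}[[x,y,z]],\{\,\cdot\,,\,\cdot\,\})$ into the formal vector fields under commutator. Evaluating on generators gives $\mathcal{X}(x)=x\frac{\D}{\D y}+2y\frac{\D}{\D z}=\N$, $\mathcal{X}(z)=-2y\frac{\D}{\D x}-z\frac{\D}{\D y}=-\M$, and $\mathcal{X}(y)=\tfrac12{\rm H}$, while $\{x,\Delta\}=\{y,\Delta\}=\{z,\Delta\}=0$ shows that $\Delta$ is a Casimir, so $\mathcal{X}(\Delta)=0$ and $\ker\mathcal{X}$ is the Casimir ring $\mathbb{R}[[\Delta]]$. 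The identity $\mathcal{X}(x)=\N$ is exactly the identification of $\ad_x$ with the differential operator $\N$ already invoked in the proof of Corollary \ref{poisson}.

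First I would check that $\Psi$ agrees with $\mathcal{X}|_{\mathfrak{B}}$ on the generators $\mathfrak{b}_{i,k}^{l}$. Since $\mathcal{X}(x)=\N$, the homomorphism property upgrades to the intertwining $\mathcal{X}(\ad_x^{\,l+1}g)=\ad_{\N}^{\,l+1}\mathcal{X}(g)$ for every $g$, where on the right $\ad_{\N}$ is the Lie action on vector fields. Combining $\mathcal{X}(\Delta)=0$ with the Leibniz rule yields $\mathcal{X}(z^{i+1}\Delta^k)=\Delta^k\mathcal{X}(z^{i+1})=-(i+1)z^i\Delta^k\M$, and inserting this into definition \eqref{pb} gives $\mathcal{X}(\mathfrak{b}_{i,k}^{l})=\frac{1}{\kappa_{l+1,2i+2}}\N^{l+1}z^i\Delta^k\mb{\M}={\rm B}_{i,k}^{l}$ by \eqref{bm}. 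Hence $\Psi=\mathcal{X}|_{\mathfrak{B}}$. Because the ${\rm B}_{i,k}^{l}$ are linearly independent (being the basis of $\B$ used in establishing the structure constants \eqref{LieEq}), the $\mathfrak{b}_{i,k}^{l}$ are linearly independent as well, so $\Psi$ is a well-defined linear bijection of $\mathfrak{B}$ onto $\B$; and as a restriction of the homomorphism $\mathcal{X}$ it automatically satisfies $\Psi(\{f,g\})=[\Psi(f),\Psi(g)]$. Thus, once $\mathfrak{B}$ is shown to be closed under the bracket, $\Psi$ is a Lie isomorphism.

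The one genuine gap, which I expect to be the \emph{main obstacle}, is exactly this closure. For $f,g\in\mathfrak{B}$ the homomorphism gives $\mathcal{X}(\{f,g\})=[\Psi f,\Psi g]\in\B=\mathcal{X}(\mathfrak{B})$, so $\{f,g\}$ agrees with a unique element of $\mathfrak{B}$ up to an element of $\ker\mathcal{X}=\mathbb{R}[[\Delta]]$; the subtlety is that $\mathcal{X}$ cannot detect a spurious Casimir summand. To exclude it I would exploit the grading by the ${\rm H}$-eigenvalue: since $\mathcal{X}(y)=\tfrac12{\rm H}$ is Hamiltonian it is a derivation of the bracket, so $\{\,\cdot\,,\,\cdot\,\}$ is weight-homogeneous and $\{\mathfrak{b}^{q_1}_{i_1,k_1},\mathfrak{b}^{q_2}_{i_2,k_2}\}$ carries weight $-2\sigma_2$ with $\sigma_2=q_1+q_2-i_1-i_2$, whereas every Casimir has weight $0$. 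This confines a possible correction to the single stratum $\sigma_2=0$, where I would insert the explicit polynomial expansions of Corollary \ref{poisson} into \eqref{poissongenerat} and resum the resulting products $\N^{a}\mb{z^{\bullet}}\N^{b}\mb{z^{\bullet}}$ by Lemma \ref{A.1} and Theorem \ref{A.3}. This is the same bookkeeping that produced the constants $a^{q_1,q_2}_{j,i_1,i_2}$ of \eqref{LieEq}, and it shows term by term that $\{\mathfrak{b}^{q_1}_{i_1,k_1},\mathfrak{b}^{q_2}_{i_2,k_2}\}$ equals the right-hand side of \eqref{LieEq} with every ${\rm B}$ replaced by the corresponding $\mathfrak{b}$, with no leftover Casimir term. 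With closure established, $\mathfrak{B}$ is a Lie subalgebra of $(\mathbb{R}[[x,y,z]],\{\,\cdot\,,\,\cdot\,\})$ and $\Psi$ is a bijective Lie homomorphism, hence a Lie isomorphism, as claimed.
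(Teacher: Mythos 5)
Your proposal is correct and, at its core, runs along the same lines as the paper's own proof, only stated more conceptually: the paper's remark that ``the actions of \(\ad^{n}_{x}\) on \(z^{i+1}\Delta^k\) and \(\ad_{\N}\) on \(z^i\Delta^k\M\) are identified through \(\Psi\)'' is precisely your observation that \(\Psi\) is the restriction of the Hamiltonian map \(\mathcal{X}(f)=\{f,\cdot\}\), with \(\mathcal{X}(x)=\N\), \(\mathcal{X}(y)=\tfrac12{\rm H}\), \(\mathcal{X}(z)=-\M\), \(\mathcal{X}(\Delta)=0\); your verification \(\mathcal{X}(\mathfrak{b}^l_{i,k})={\rm B}^l_{i,k}\) reproduces the paper's Leibniz-rule display combined with the definitions \eqref{pb} and \eqref{bm}, and the homomorphism property is the Jacobi identity in both treatments. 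Where you genuinely add something is in isolating the closure question: the paper's one-line conclusion never confronts the fact that \(\mathcal{X}\) has the nontrivial kernel \(\mathbb{R}[[\Delta]]\), so that \([\Psi f,\Psi g]\in\B\) determines \(\{f,g\}\) only modulo a Casimir summand, and your weight argument correctly confines that ambiguity to the stratum \(\sigma_2=0\). Your proposed finish there (explicit resummation via Lemma \ref{A.1} and Theorem \ref{A.3}) would work but is laborious and is left as a sketch; a cleaner way to exclude the Casimir term is representation-theoretic: since \(\{\Delta^{k_1}f,\Delta^{k_2}g\}=\Delta^{k_1+k_2}\{f,g\}\) one may assume \(k_1=k_2=0\); a trivial \(\Sl\)-summand can occur in the image of the equivariant map on \(V_{i_1}\otimes V_{i_2}\) only when \(i_1=i_2\); and for \(i_1=i_2\) the antisymmetry of the bracket makes it factor through \(\Lambda^2\) of a single irreducible module of even highest weight \(2i+2\), whose decomposition contains no trivial summand. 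With either completion your route is sound, and it is in fact more careful than the paper's own proof on exactly the point where that proof is thinnest.
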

\begin{proof} By the Leibniz rule we have
\bas
\ad^{n}_{{x}} \mb{{z}^{i+1}} &=&
{z}\,\ad^{n}_{{x}} \mb{{z}^{i}}+ n\,\ad_{{x}}\mb{ {z}}\,\ad^{n-1}({x})\mb{{z}^{i}}+ \frac{n(n-1)}{2!}\,\ad^2_{{x}}\mb {{z}}\,\ad^{n-2}_{{x}}\mb{{z}^{i}}
\\
&=&{z}\,\ad^{n}_{{x}}\mb{{z}^{i}}+ 2 n{y}\,\ad^{n-1}_{{x}}\mb{{z}^{i}}+ n(n-1) {x}\,\ad^{n-2}_{{x}} \mb{{z}^{i}}.
\eas Due to equation \eqref{fiso}, we have \(\Psi({x})=\N,\) \(\Psi({y})=\frac{{\rm H}}{2},\) and \(\Psi({z})=-\M.\) Further,
\bes
\ad^{n}_{{x}} {{z}}^{i+1}\mb{\Delta^k}=\ad^{n}_{{x}}\mb{ {{z}}^{i+1}}\Delta^k.
\ees The actions of \(\ad^{n}_{{x}}\) on \({{z}}^{i+1}\Delta^k\) and \(\ad_{\N}\) on \({{z}}^{i}\Delta^k\M\) are identified through \(\Psi.\)
 Then, the proof follows an induction on \(n,\) structure constants \eqref{ppl} and those governing the \(\Sl\) triple \(\M,\N,\) and \({\rm H}.\)
\end{proof}
Now we present a ring structure constants for \(\mathfrak{B}\) so that \(\mathfrak{B}\) is a Poisson algebra.

\begin{lem}
The space \(\mathfrak{B}\) is a Poisson algebra. In particular, let \(q_1=2s_1+r_1\) and \(q_2=2s_2+r_2.\) Then, the ring structure constants are given by
\bas
&\mathfrak{b}_{{i_1-1},k_1}^{q_1-1}\mathfrak{b}_{{i_2-1},k_2}^{q_2-1}=
\frac{-1}{{i_1i_2\kappa_{q_1, 2{i_1}}\kappa_{q_2, 2{i_2}}}}	\sum_{p=\max\{\sigma_2,0\}}^{s_1+s_2+\lfloor\frac{r_1+r_2}{2}\rfloor}
\frac{\kappa_{2p+|r_2-r_1|, 2(2p-\sigma_2+|r_2-r_1|)}C_{p,i,j}^{q_1,q_2}}{(2p-\sigma_2+|r_2-r_1|)^{-1}}
{\mathfrak{b}}^{2p+|r_2-r_1|-1}_{2p-\sigma_2+|r_2-r_1|-1,\sigma_1-p+{\lfloor\frac{r_1+r_2}{2}\rfloor}}.&
\eas
\end{lem}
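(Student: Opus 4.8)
The plan is to deduce the ring-product formula directly from the expansion of $\N^{q_1}\mb{z^{i_1}}\,\N^{q_2}\mb{z^{i_2}}$ supplied by Theorem \ref{A.3}, using the single observation that the Poisson operator $\ad_x$ and the Lie operator $\N$ are literally the same derivation of $\mathbb{R}[[x,y,z]]$. Indeed, $\ad_x$ and $\N$ agree on the generators $x,y,z$ by \eqref{ppl} and \eqref{Triad}, so $\ad_x^{n}=\N^{n}$ on all scalars (as already invoked in the proof of Corollary \ref{poisson}); since $\N\Delta=0$, the definition \eqref{pb} factors as
\[
\mathfrak{b}_{i,k}^{l}=\frac{-1}{(i+1)\kappa_{l+1,2i+2}}\,\N^{l+1}\mb{z^{i+1}}\,\Delta^{k}.
\]
Reindexing with $i\mapsto i_1-1,\ l\mapsto q_1-1$ (and likewise for the second factor) rewrites each $\mathfrak{b}$ in the statement as a scalar multiple of $\N^{q_1}\mb{z^{i_1}}\Delta^{k_1}$, respectively $\N^{q_2}\mb{z^{i_2}}\Delta^{k_2}$.

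First I would multiply these two reindexed expressions, pulling out the common prefactor $\big(i_1 i_2\,\kappa_{q_1,2i_1}\kappa_{q_2,2i_2}\big)^{-1}$ and the power $\Delta^{k_1+k_2}$, so the product equals this prefactor times $\N^{q_1}\mb{z^{i_1}}\,\N^{q_2}\mb{z^{i_2}}\,\Delta^{k_1+k_2}$. Then I would substitute the expansion \eqref{nn} of Theorem \ref{A.3} (with $i\mapsto i_1,\ j\mapsto i_2$), turning the product into a finite sum over $p$ from $\max\{\sigma_2,0\}$ to $s_1+s_2+\lfloor\frac{r_1+r_2}{2}\rfloor$ of terms $C_{p,i_1,i_2}^{q_1,q_2}\,\N^{2p+|r_2-r_1|}\mb{z^{2p-\sigma_2+|r_2-r_1|}}\,\Delta^{\,s_1+s_2-p+\lfloor\frac{r_1+r_2}{2}\rfloor}$.

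Next I would run the factored identity backwards: solving it for $\N^{l+1}\mb{z^{i+1}}\Delta^{k}$ and matching $l+1=2p+|r_2-r_1|$, $i+1=2p-\sigma_2+|r_2-r_1|$ converts each summand into
\[
-(2p-\sigma_2+|r_2-r_1|)\,\kappa_{2p+|r_2-r_1|,\,2(2p-\sigma_2+|r_2-r_1|)}\,\mathfrak{b}^{\,2p+|r_2-r_1|-1}_{\,2p-\sigma_2+|r_2-r_1|-1,\;k}.
\]
The decisive bookkeeping is the $\Delta$-exponent: the total power carried by the summand, $k_1+k_2+\big(s_1+s_2-p+\lfloor\frac{r_1+r_2}{2}\rfloor\big)$, collapses to $\sigma_1-p+\lfloor\frac{r_1+r_2}{2}\rfloor$ by $\sigma_1=s_1+s_2+k_1+k_2$, which is exactly the lower $\Delta$-index of the target $\mathfrak{b}$ in the statement. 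Multiplying together the three scalar factors — the prefactor, the sign-and-$\kappa$ factor just produced, and $C_{p,i_1,i_2}^{q_1,q_2}$, and writing $(2p-\sigma_2+|r_2-r_1|)$ in the denominator as $(2p-\sigma_2+|r_2-r_1|)^{-1}$ per Notation \ref{not1} — reproduces the asserted coefficient. Since the resulting right-hand side is a finite $\mathbb{R}$-linear combination of the spanning vectors $\mathfrak{b}^{\,\cdot}_{\,\cdot,\cdot}$, the span is closed under multiplication; together with the Poisson-bracket invariance and Lie isomorphism established in Lemma \ref{iso}, this shows $\mathfrak{B}$ is a Poisson algebra.

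The hard part will be purely the index-and-constant bookkeeping rather than any new idea: I must check that the produced indices $2p+|r_2-r_1|-1$ and $2p-\sigma_2+|r_2-r_1|-1$ remain in the admissible range $-1\le l\le 2i+1$ of \eqref{Bfrac}, that the summation limits inherited from Theorem \ref{A.3} match those in the statement, and that the degenerate boundary case $2p-\sigma_2+|r_2-r_1|=0$ is consistent with the formal negative-power convention of Notation \ref{not1}. No analytic input beyond Theorem \ref{A.3} and the identity $\ad_x=\N$ is required.
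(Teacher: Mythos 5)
Your proposal is correct and follows essentially the same route as the paper: both write $\mathfrak{b}_{i_1-1,k_1}^{q_1-1}\mathfrak{b}_{i_2-1,k_2}^{q_2-1}=\frac{\N^{q_1}\mb{z^{i_1}}\N^{q_2}\mb{z^{i_2}}\Delta^{k_1+k_2}}{i_1i_2\kappa_{q_1,2i_1}\kappa_{q_2,2i_2}}$ via the identification $\ad_x=\N$ on scalars, substitute the expansion of Theorem \ref{A.3}, and convert each summand back into a $\mathfrak{b}$-term with the $\Delta$-exponent collapsing to $\sigma_1-p+\lfloor\frac{r_1+r_2}{2}\rfloor$. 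Your added bookkeeping on index ranges and the boundary case is consistent with (and slightly more explicit than) the paper's terse proof.
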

\bpr The proof directly follows from \eqref{pb} and the formulas given in  Theorem \ref{A.3}. Indeed, we have
\(\mathfrak{b}_{{i_1-1},k_1}^{q_1-1}\mathfrak{b}_{{i_2-1},k_2}^{q_2-1}=\frac{\N^{q_1}\mb{{z}^i}\N^{q_2}\mb{{z}^j}\Delta^{k_1+k_2}}
{i_1i_2\kappa_{q_1, 2{i_1}}\kappa_{q_2, 2{i_2}}}\) and
\bas
&
\frac{1}{{i_1i_2\kappa_{q_1, 2{i_1}}\kappa_{q_2, 2{i_2}}}}	\sum_{p=\max\{\sigma_2,0\}}^{s_1+s_2+\lfloor\frac{r_1+r_2}{2}\rfloor}C_{p,i,j}^{q_1,q_2}\N^{2p+|r_2-r_1|}
\mb{{z}^{2p-\sigma_2+|r_2-r_1|}}\Delta^{\sigma_1-p+{\lfloor\frac{r_1+r_2}{2}\rfloor}}&
\\
&=\frac{-1}{{i_1i_2\kappa_{q_1, 2{i_1}}\kappa_{q_2, 2{i_2}}}}\sum_{p=\max\{\sigma_2, 0\}}^{s_1+s_2+\lfloor\frac{r_1+r_2}{2}\rfloor}
\frac{\kappa_{2p+|r_2-r_1|, 2(2p-\sigma_2+|r_2-r_1|)}C_{p,i,j}^{q_1,q_2}}{(2p-\sigma_2+|r_2-r_1|)^{-1}}
{\mathfrak{b}}^{2p+|r_2-r_1|-1}_{2p-\sigma_2+|r_2-r_1|-1,\sigma_1-p+{\lfloor\frac{r_1+r_2}{2}\rfloor}}.&
\eas
\epr The following theorem presents a property that is similar to the Hamiltonian cases, \ie the rate change of functions
along with vector fields from \(\B\) can be computed by the Poisson bracket.
\begin{thm}\label{thmBpoisson}
For each \(l, i, k\), we have
\ba\label{Bpoisson}
&{\rm B}^l_{i,k}=\sum_{j=1}^3 \{x_j, \Psi^{-1}( {\rm B}^l_{i,k})\}\,{\bf e}_j, \quad \hbox{ where } \quad x_1:=x, x_2:= y, x_3:= z.&
\ea This representation for \(B^l_{i, k}\) indicates that the family of vector fields from \(\B\) and their associated dynamics
are uniquely determined by their secondary Clebsch potentials. Furthermore, the change rate of any formal power series in
\((x, y, z),\) say \(F: \mathbb{R}^3\rightarrow \mathbb{R},\) along a vector filed \(v\) from \(\B\) is given by
\be\label{PoissonForm}
\frac{d F}{d t}:= \{F, \Psi^{-1}(v)\}.
\ee
\end{thm}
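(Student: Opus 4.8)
The plan is to establish the vector-field identity \eqref{Bpoisson} on each basis element \({\rm B}^l_{i,k}\) and then read off \eqref{PoissonForm} as a one-line corollary. The conceptual engine is the observation that the three Hamiltonian derivations \(\ad_{x_j}:=\{x_j,\cdot\}\) coincide, as differential operators on \(\mathbb{R}[[x,y,z]]\), with the \(\Sl\)-generators \(\Psi(x_j)\). Indeed, evaluating \eqref{ppl} on \(x,y,z\) gives \(\{x,\cdot\}={\N}\), \(\{y,\cdot\}=\tfrac{1}{2}{\rm H}\), and \(\{z,\cdot\}=-{\M}\) on each generator; since all six operators are derivations, the Leibniz rule extends these equalities to all of \(\mathbb{R}[[x,y,z]]\). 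Consequently the three components of the candidate vector field \(\sum_j\{x_j,g\}\mathbf{e}_j\) are exactly \({\N}(g)\), \(\tfrac{1}{2}{\rm H}(g)\), and \(-{\M}(g)\), i.e. the \(\Sl\)-triple applied to the scalar potential \(g\).

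By linearity it suffices to check \eqref{Bpoisson} for \(g=\mathfrak{b}^l_{i,k}\). Using \(\ad_x={\N}\) and \eqref{pb} I rewrite \(\mathfrak{b}^l_{i,k}=-{\N}^{l+1}\mb{\mathbf{f}}\big/\big((i+1)\kappa_{l+1,2i+2}\big)\) with \(\mathbf{f}:=z^{i+1}\Delta^k\). Here \(\mathbf{f}\in\ker{\M}\) is a highest-weight vector, and since \(z\) has \({\rm H}\)-weight \(2\) and \(\Delta\) weight \(0\), it is an \({\rm H}\)-eigenfunction of weight \(\omega=2(i+1)\). I then compute the three components term by term: the first is \({\N}(\mathfrak{b}^l_{i,k})\), which merely advances the string to \({\N}^{l+2}\mb{\mathbf{f}}\); the second is \(\tfrac{1}{2}{\rm H}(\mathfrak{b}^l_{i,k})=(i-l)\mathfrak{b}^l_{i,k}\), because \([{\rm H},{\N}]=-2{\N}\) forces \({\N}^{l+1}\mb{\mathbf{f}}\) to carry weight \(2(i-l)\); and the third is \(-{\M}(\mathfrak{b}^l_{i,k})\), which I evaluate with the standard \(\Sl\) raising relation \({\M}{\N}^{m}\mb{\mathbf{f}}=m(\omega-m+1){\N}^{m-1}\mb{\mathbf{f}}\) (valid since \({\M}\mathbf{f}=0\)), taking \(m=l+1\) and \(\omega=2i+2\) to produce the coefficient \((l+1)(2i-l+2)\).

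It then remains to match these three expressions against the explicit components of \({\rm B}^l_{i,k}\) recorded in \eqref{Blik}. This is pure bookkeeping of the falling-factorial normalizations: the identities \(\kappa_{l+2,2i+2}=(2i-l+1)\kappa_{l+1,2i+2}\) and \(\kappa_{l+1,2i+2}=(2i-l+2)\kappa_{l,2i+2}\) make the weight factors \(2i-l+1\), \(i-l\), and \((l+1)(2i-l+2)\) produced above cancel against the normalizations appearing in \eqref{Blik}, so that the three components agree termwise (up to the uniform orientation fixed by the conventions \(\Psi(x)={\N}\) and \(\Psi(z)=-{\M}\)). This establishes \eqref{Bpoisson}.

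Finally, \eqref{PoissonForm} drops out immediately. For any \(v\in\B\) and any formal power series \(F\), the flow derivative is \(\tfrac{dF}{dt}=v(F)\); expanding \(v=\sum_j\{x_j,\Psi^{-1}(v)\}\mathbf{e}_j\) by \eqref{Bpoisson} gives \(v(F)=\sum_j\{x_j,\Psi^{-1}(v)\}\,\partial_{x_j}F\), and the right-hand side is precisely \(\{F,\Psi^{-1}(v)\}\) by the chain-rule form of the Leibniz identity \(\{F,g\}=\sum_j(\partial_{x_j}F)\{x_j,g\}\), itself a consequence of \eqref{poissonr}--\eqref{poissongenerat}. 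I expect the only genuine obstacle to be sign and normalization bookkeeping: one must pin down the orientation coming from \(\Psi(z)=-{\M}\) and track the \(\kappa\)-factors consistently, since the conceptual content (the derivation identity \(\ad_{x_j}=\Psi(x_j)\) together with a single \(\Sl\) raising relation) is otherwise very short.
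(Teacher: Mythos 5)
Your argument is correct and is essentially the paper's own proof: both reduce \eqref{Bpoisson} to matching the three components of \({\rm B}^l_{i,k}\) in \eqref{Blik} against the actions of \(\N\), \(\tfrac{1}{2}{\rm H}\) and \(-\M\) on the scalar \(\mathfrak{b}^l_{i,k}\), the only difference being that the paper evaluates these actions through the isomorphism \(\Psi\) and the bracket formulas \eqref{S3} whereas you evaluate them directly from the \(\Sl\) weight and raising relations --- an equivalent computation. The one point you defer, the ``uniform orientation,'' does need to be pinned down: chasing it yields a global factor \(-1\) (for instance \(\sum_j\{x_j,\mathfrak{b}^1_{0,0}\}\,{\bf e}_j=\sum_j\{x_j,-x\}\,{\bf e}_j=\N=-{\rm B}^1_{0,0}\)), so the identity closes only with the bracket written as \(\{\Psi^{-1}({\rm B}^l_{i,k}),x_j\}\); this is a sign issue in the statement as printed, shared by the paper's own componentwise formula \({\rm B}^{l}_{i,k}=(l-2i-1)\mathfrak{b}_{i,k}^{l+1}\frac{\D}{\D x}+(l-i)\mathfrak{b}_{i,k}^{l}\frac{\D}{\D y}+(l+1)\mathfrak{b}_{i,k}^{l-1}\frac{\D}{\D z}\), and not a defect of your method.
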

\bpr
From equations \eqref{Blik}, \eqref{fiso} and \eqref{pb}, we have
\bas
{\rm B}^{l}_{i,k}&=&{(l-2i-1)\mathfrak{b}_{i,k}^{l+1} }\frac{\D}{\D {x}}+{(l-i)\mathfrak{b}_{i,k}^{l}}\frac{\D}{\D {y}}+{(l+1)\mathfrak{b}_{i,k}^{l-1}}\frac{\D}{\D {z}}.
\eas Then, the proof follows the Lie isomorphism \eqref{fiso}, the formulas \eqref{S3},
\(\Psi({x})=\N,\) \(\Psi({y})=\frac{{\rm H}}{2},\) \(\Psi({z})=-\M,\) \({\rm B}^1_{0,0}=-{\rm N}\) and \({\rm B}^{-1}_{0,0}=-{\rm M}.\)
The vector field representation \(v\) from \(\B\) in Poisson bracket
form \eqref{PoissonForm} directly follows from equation \eqref{Bpoisson}, the linearity and the continuity (in filtration topology) of
the Lie isomorphism \(\psi,\) the continuity and bilinearity of the Poisson bracket, and finally, the chain and Leibniz rules.
\epr

\section{Geometrical features of integrable solenoidal vector fields }\label{Sec4}

\begin{defn}\label{mechanic}
A vector field \(v\) is called {\it solenoidal} (nondissipative, incompressible, or volume-preserving) when \({\rm div}(v(x))=0,\) and
otherwise the vector field \(v\) is called {\it generalized dissipative}, \ie \({\rm div}(v(x))\neq0\). When \(v(x)= \nabla f(x)\) for a scalar function \(f(x),\)
the vector field \(v\) is called a {\it gradient} or a {\it globally potential} vector field. Examples of this are the gravitational
potential, a mechanical potential energy, and the electric potential energy. The vector field \(v(x)\) is said to be nonpotential (non-gradient)
when there exists at least a point \(x\in{\mathbb{R}}^3\) such that \({\bf curl}(v(x))\neq 0\); \eg see \cite[page 1]{tarasov2008quantum}.
\end{defn}

\begin{thm}\label{Solenoid}
For every \(v\in {\mathscr{B}},\) \(v\) is solenoidal.
\end{thm}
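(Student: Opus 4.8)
The plan is to read off solenoidality directly from the Poisson representation established in Theorem~\ref{thmBpoisson}. Since the Poisson bracket and the inverse isomorphism $\Psi^{-1}$ are both linear, every $v\in\mathscr{B}$ inherits the representation
\[
v=\sum_{j=1}^{3}\{x_j,I\}\,\mathbf{e}_j,\qquad I:=\Psi^{-1}(v),
\]
with $x_1:=x$, $x_2:=y$, $x_3:=z$. First I would write out the components using the generating relations \eqref{ppl}, namely $\{x,y\}=x$, $\{x,z\}=2y$, $\{y,z\}=z$, extended by the Leibniz rule. Introducing the antisymmetric structure matrix $\Pi_{jk}:=\{x_j,x_k\}$, each component takes the form $v_j=\{x_j,I\}=\sum_{k}\Pi_{jk}\,\partial_k I$.

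The divergence then splits into two sums,
\[
{\rm div}(v)=\sum_{j,k}(\partial_j\Pi_{jk})\,\partial_k I+\sum_{j,k}\Pi_{jk}\,\partial_j\partial_k I .
\]
The second sum vanishes identically because $\Pi_{jk}$ is antisymmetric while the Hessian $\partial_j\partial_k I$ is symmetric in $(j,k)$. For the first sum it is enough to verify that the linear Poisson structure is unimodular, \ie $\sum_j\partial_j\Pi_{jk}=0$ for each fixed $k$; this is a one-line check from the entries $\Pi_{12}=x$, $\Pi_{13}=2y$, $\Pi_{23}=z$ (for example, for $k=2$ one has $\partial_x\Pi_{12}+\partial_z\Pi_{32}=\partial_x(x)+\partial_z(-z)=0$, and the cases $k=1,3$ are analogous). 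Hence ${\rm div}(v)=0$, independently of $I$.

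A second, purely Lie-algebraic route bypasses Theorem~\ref{thmBpoisson} and is perhaps even more transparent. The generators $\N$ and $\M$ are traceless linear vector fields, hence solenoidal; and since $z^i\Delta^k\in\ker\M=\langle\Delta,z\rangle$, the identity ${\rm div}(fX)=f\,{\rm div}(X)+X(f)$ gives ${\rm div}(z^i\Delta^k\M)=(z^i\Delta^k)\,{\rm div}(\M)+\M(z^i\Delta^k)=0$. Because the identity ${\rm div}[X,Y]=X({\rm div}\,Y)-Y({\rm div}\,X)$ shows that solenoidal fields form a Lie subalgebra, the iterated bracket $\ad_{\N}^{l+1}(z^i\Delta^k\M)$ defining ${\rm B}^{l}_{i,k}$ in \eqref{bm} is solenoidal, and linearity extends this to all of $\mathscr{B}$.

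I do not expect a genuine obstacle here: both arguments reduce to short mechanical verifications. The only point deserving mild care is the passage from the homogeneous basis elements ${\rm B}^{l}_{i,k}$ to an arbitrary element of $\mathscr{B}$, which is an infinite formal series; here one simply observes that the divergence operator acts degree-by-degree and is continuous in the filtration topology, so term-wise vanishing delivers ${\rm div}(v)=0$ for the whole series.
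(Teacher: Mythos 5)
Your proposal is correct, and both of your arguments are genuinely different from the paper's proof. The paper proceeds by brute force: it writes \({\rm B}^l_{i,k}=\Delta^k{\rm B}^l_{i,0}\), computes \(\nabla\cdot{\rm B}^l_{i,0}\) componentwise from equation \eqref{Blik} and the derivative formulas \eqref{deriv}, and then kills \(\nabla(\Delta^k)\cdot{\rm B}^l_{i,0}\) by expanding everything into monomials via Lemma \ref{A.1} and verifying the combinatorial identities \eqref{invd} and \eqref{Eq4}. Your first route replaces all of this with the single structural observation that the linear Poisson structure \eqref{ppl} is unimodular (\(\sum_j\partial_j\Pi_{jk}=0\)), so that any field of the form \(v_j=\sum_k\Pi_{jk}\partial_k I\) is automatically divergence-free, independently of \(I\); this is legitimate since Theorem \ref{thmBpoisson} is established in Section \ref{Sec3}, before the solenoidality theorem, and its proof does not use it. Your second route is even more economical: \(\N\) and \(z^i\Delta^k\M\) are divergence-free (the latter because \(\M(z^i\Delta^k)=0\)), and \({\rm div}[X,Y]=X({\rm div}\,Y)-Y({\rm div}\,X)\) makes divergence-free fields a Lie subalgebra, so the iterated brackets \eqref{bm} inherit solenoidality. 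What your approaches buy is brevity and conceptual transparency; what the paper's computation buys is the byproduct \(\nabla(\Delta^k)\cdot{\rm B}^l_{i,0}=0\), i.e.\ \({\rm B}^l_{i,k}(\Delta)=0\), which is reused in the proof of Theorem \ref{int}. Your closing remark about passing from basis elements to formal series via grade-by-grade action of the divergence is exactly the right (and only) point of care.
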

\bpr
By the Leibniz rule and \({{\rm B}}^{l}_{i,k}=\Delta^k{{\rm B}}^{l}_{i,0}\), we observe that
\ba\label{p0}
\nabla\cdot{\rm B}^l_{i,k}={\nabla} \cdot\Delta^k{\rm B}^l_{i,0}=\Delta^k{\nabla} \cdot{\rm B}^l_{i,0}
+{\nabla}(\Delta^k)\cdot{\rm B}^l_{i,0}.
\ea We claim that \({\nabla} \cdot{\rm B}^l_{i,0}=0\) and \({{\nabla}(\Delta^k)}\cdot{\rm B}^l_{i,0}=0.\) Using Lemma \ref{A.4}, the partial derivatives of
\({\N}^{l}\mb{{{z}}^{i+1}}\) are given by
\ba\label{deriv}
&\frac{\D}{\D{{x}}}\N^{l+2}\mb{{{z}}^{i+1}}=( l+1 )( l+2 )(i+1)\N^{ l}\mb{{z}^{i}}, \;\;
\frac{\D}{\D{{y}}}\N^{l+1}\mb{{{z}}^{i+1}}
=2(l+1)(i+1)\N^{l}\mb{{{z}}^{i}}, \;\;
\frac{\D}{\D {{z}}}{\N}^{l}\mb{{{z}}^{i+1}}=(i+1)\N^{l}\mb{{{z}}^{i}}.\quad&
\ea
Equation \eqref{Blik} and Lemma \ref{A.4} give rise to
\ba\label{p1}
&{\nabla} \cdot{\rm B}^l_{i,0}=(l+1)(i+1)\left(\frac{( l+2 )(2i-l+1)}{\kappa_{l+2,2i+2}}
+\frac{2(i-l)}{\kappa_{l+1,2i+2}}-\frac{1}{\kappa_{l,2i+2}}\right)\N^{ l}\mb{{z}^{i}}=0.&
\ea
On the other hand for \(l=2s,\) equation \eqref{Blik}  gives rise to
\bas
&{{\nabla}(\Delta^k)}\cdot{\rm B}^l_{i,0}=\frac{k(2i-s+1) {z}\Delta^{k-1}\N^{2s+2}\mb{{{z}}^{i+1}}}{(i+1)\kappa_{s+2,2i+2}}
-\frac{2k(i-s){y} \Delta^{k-1}\N^{2s+1}\mb{{{z}}^{i+1}}}{(i+1)\kappa_{s+1,2i+2}}
- \frac{k(s+1){x}\Delta^{k-1}\N^{2s}\mb{{{z}}^{i+1}}}{(i+1)\kappa_{s,2i+2}}\qquad\qquad\qquad\quad&
\\&\qquad\quad=2k\,i!(2s+1)!\sum_{j=0}^{s} \frac{\big({{(s+1)(i-j-s+1)}}-{{j(i-2s)}}-{{(i-s+1)(s-j+1)}}\big){{x}}^{s-j+1}{{y}}^{2j}{{z}}^{i-j-s+1}
\Delta^{k-1}}{{2}^{-2j}(2i-2s+1)!^{-1}(2j)!(s-j+1)!(i-j-s+1)!}.&
\eas The last equality is derived from Lemma \ref{A.1}. Hence, \({\nabla}(\Delta^k)\cdot{\rm B}^l_{i,0}=0\) due to
\be
\label{invd}
{{(s+1)(i-j-s+1)}}-{{j(i-2s)}}-{{(i-s+1)(s-j+1)}}=0.
\ee When \(l=2s+1,\) \({{\nabla}(\Delta^k)}\cdot{\rm B}^l_{i,0}\) is given by
\bas
&\frac{k(2i-2s) {z}\Delta^{k-1}\N^{2s+3}\mb{{{z}}^{i+1}}}{(i+1)\kappa_{2s+3,2i+2}}
-\frac{2k(i-2s-1){y} \Delta^{k-1}\N^{2s+2}\mb{{{z}}^{i+1}}}{(i+1)\kappa_{2s+2,2i+2}}
- \frac{k(2s+2){x}\Delta^{k-1}\N^{2s+1}\mb{{{z}}^{i+1}}}{(i+1)\kappa_{2s+1,2i+2}}&
\\&=\sum_{j=0}^{s} \frac {k(2s+1)!(i+1)!(2i-1-2s)!{x}^{s+1-j}{y}^{2j+1}{z}^{i-j-s}}{\Delta^{1-k}{2}^{-2j}(i-j-s-1)!(s-j)!(2i+2)!(2j)!(i+1)}
\big({\frac{4(i-s)(2s+2)_1^2}{(s+1-j)(2j+1)}}-{\frac{4(s+1)(2i-2s)_{-1-i}^2}{(s+1-j)(i-j-s)}}-{\frac{4(s+1)(2i-2s)_1^2}{(2j+1)(i-j-s)}}\big)\quad\quad&
\\&+\frac{ki! {y}^{2s+3}{z}^{i-2s-1}\Delta^{k-1}}{(2i+2)!}\big({\frac{(2i-2s){2}^{2s+3}(2i-1-2s)!}{(i-2s-2)!}}-{\frac {(i-2s-1){2}^{2s+3} (2i-2s)!}{ (i-2s-1)!}}\big).
\qquad\qquad\qquad\qquad\qquad\qquad&
\eas Hence \({{\nabla}(\Delta^k)}\cdot{\rm B}^l_{i,0}=0\) due to the equation
\ba\label{Eq4}
&{\frac{(i-s)(2s+2)_1^2}{(s+1-j)(2j+1)}}-\frac{(s+1)(2i-2s)_{-1-i}^2}{(s+1-j)(i-j-s)}-\frac{(s+1)(2i-2s)_1^2}{(2j+1)(i-j-s)}=
\frac{2(i-s)(2i-1-2s)!}{(i-2s-2)!}-\frac{(i-2s-1)(2i-2s)!}{(i-2s-1)!}=0.&
\ea
Equations \eqref{Eq4}, \eqref{p1}, \eqref{invd} and \eqref{p0} conclude the proof.
\epr

\begin{thm}\label{int}
Let \(i\) and \(k\) be arbitrary nonnegative integers and \(-1\leq l\leq i+1.\)
\begin{itemize}
\item\label{FirstInts} Polynomials \(\mathfrak{b}^l_{i,0}\) and \(\Delta\) are two first integrals for \({\rm B}^l_{i,k},\) \ie
\bas
&{\rm B}^l_{i,k}(\mathfrak{b}^l_{i,0})=0\quad\hbox{ and }\quad {\rm B}^l_{i,k}(\Delta)=0.&
\eas Indeed for every \(v\in \B,\) \(\Psi^{-1}{(v)}\in \mathfrak{B}\) and \(\Delta\) are two first integrals for \(v.\)
\item A Clebsch potential representation for \({\rm B}^l_{i,k}\) is given by
\ba\label{Clebsch}
{\rm B}^l_{i,k}=\Delta^k(\nabla  {\mathfrak{b}^l_{i,0}} \times\nabla \Delta).
\ea Equation \eqref{Clebsch} provides an alternative representation for each vector field \(v\) in \(\B\) by using the primary and secondary Clebsch potentials
\(\Delta\) and \(\Psi^{-1}{(v)}\in \mathfrak{B}.\)
\item The polynomial functions \({\mathfrak{b}^l_{i,0}}\) and \(\Delta\) are two functionally independent first integrals for \({\rm B}^l_{i,0}\).
\item The ring of invariants for \({\rm B}^{-1}_{i,k},\) \({\rm B}^{i}_{i,k},\) and \({\rm B}^{2i+1}_{i,k}\) includes
\(\left\langle{\Delta, z}\right\rangle,\) \(\left\langle{\Delta, y}\right\rangle\) and \(\left\langle{\Delta, x}\right\rangle,\) respectively.
\end{itemize}
\end{thm}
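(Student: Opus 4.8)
The plan is to carry out everything inside the Poisson algebra $\mathfrak{B}$ and transport it through the Lie isomorphism $\Psi$, using that $\Psi^{-1}({\rm B}^l_{i,k})=\mathfrak{b}^l_{i,k}$ and that, by \eqref{PoissonForm}, the action of a vector field $v\in\B$ on a scalar $F$ is the Poisson bracket $v(F)=\{F,\Psi^{-1}(v)\}$. The one computation that drives all four items is that $\Delta$ is a Casimir of \eqref{ppl}: a direct check gives $\{x,\Delta\}=\{y,\Delta\}=\{z,\Delta\}=0$, and bilinearity with the Leibniz rule then forces $\{F,\Delta\}=0$ for every $F$. As a byproduct, $\{x,\Delta\}=0$ together with the definition \eqref{pb} gives $\mathfrak{b}^l_{i,k}=\Delta^k\mathfrak{b}^l_{i,0}$, since $\ad_x$ annihilates $\Delta^k$.

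First I would dispatch the first integrals. The Casimir property gives ${\rm B}^l_{i,k}(\Delta)=\{\Delta,\mathfrak{b}^l_{i,k}\}=0$ at once, and ${\rm B}^l_{i,k}(\mathfrak{b}^l_{i,0})=\{\mathfrak{b}^l_{i,0},\Delta^k\mathfrak{b}^l_{i,0}\}=\Delta^k\{\mathfrak{b}^l_{i,0},\mathfrak{b}^l_{i,0}\}=0$, where the Casimir property pulls $\Delta^k$ outside and antisymmetry kills the remaining self-bracket. Replacing $\mathfrak{b}^l_{i,0}$ by an arbitrary $\Psi^{-1}(v)\in\mathfrak{B}$ in the same two lines proves the statement for a general $v\in\B$.

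For the Clebsch representation I would establish the pointwise identity $\sum_{j}\{x_j,F\}\,{\bf e}_j=\nabla F\times\nabla\Delta$ for every $F$. Since $\nabla\Delta=(z,-2y,x)$, expanding the cross product componentwise produces $(xF_y+2yF_z,\,-xF_x+zF_z,\,-2yF_x-zF_y)$, which matches $\{x,F\}$, $\{y,F\}$, $\{z,F\}$ as computed from \eqref{ppl} via the derivation property; this is exactly the assertion that the bracket \eqref{ppl} is the $\Delta$-Hamiltonian structure. Taking $F=\mathfrak{b}^l_{i,0}$ and comparing with \eqref{Bpoisson} yields $\nabla\mathfrak{b}^l_{i,0}\times\nabla\Delta={\rm B}^l_{i,0}$, and multiplying by $\Delta^k$ together with ${\rm B}^l_{i,k}=\Delta^k{\rm B}^l_{i,0}$ gives \eqref{Clebsch}. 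Functional independence is then immediate: the cross product of the two gradients equals ${\rm B}^l_{i,0}$, which is a nonzero polynomial vector field (it is a nonzero scalar multiple of a basis element, these being linearly independent by Theorem \ref{3DVects}), so $\nabla\mathfrak{b}^l_{i,0}$ and $\nabla\Delta$ are linearly independent off the proper algebraic set where that vector field vanishes, i.e. almost everywhere, giving rank $2$.

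Finally, for the invariant rings I would read the components of ${\rm B}^l_{i,k}$ off the expansion ${\rm B}^{l}_{i,k}=(l-2i-1)\mathfrak{b}_{i,k}^{l+1}\frac{\D}{\D x}+(l-i)\mathfrak{b}_{i,k}^{l}\frac{\D}{\D y}+(l+1)\mathfrak{b}_{i,k}^{l-1}\frac{\D}{\D z}$ obtained in the proof of Theorem \ref{thmBpoisson}. The coefficient $(l+1)$ vanishes at $l=-1$, $(l-i)$ at $l=i$, and $(l-2i-1)$ at $l=2i+1$, whence ${\rm B}^{-1}_{i,k}(z)=0$, ${\rm B}^{i}_{i,k}(y)=0$ and ${\rm B}^{2i+1}_{i,k}(x)=0$ respectively. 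Since $\Delta$ is always an invariant and the set of first integrals is a ring (the vector field acts as a derivation), the invariant ring contains $\langle\Delta,z\rangle$, $\langle\Delta,y\rangle$ and $\langle\Delta,x\rangle$ in the three cases. I expect the only genuinely delicate step to be the cross-product identity of the third paragraph; once it is in place, every item collapses to the Casimir property of $\Delta$ and the already-computed components of ${\rm B}^l_{i,k}$.
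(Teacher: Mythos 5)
Your proposal is correct, but it takes a genuinely different route from the paper's. The paper proves \({\rm B}^l_{i,k}(\mathfrak{b}^l_{i,0})=0\) by brute force: it reduces the claim to the polynomial identity \eqref{SC}, expands everything through equation \eqref{Nqq1Nqq1}, and kills the resulting hypergeometric sums \(f_0(n)\), \(f_1(n)\) with Zeilberger's algorithm (explicit certificates \(G_0\), \(G_1\)); it gets \({\rm B}^l_{i,k}(\Delta)=0\) from the divergence computation in Theorem \ref{Solenoid}; and it obtains the Clebsch form by noting that tangency to both level surfaces forces \({\rm B}^l_{i,k}=S^l_{i,k}\,\nabla\mathfrak{b}^l_{i,0}\times\nabla\Delta\) and then pinning down \(S^l_{i,k}=\Delta^k\) through a second component-wise identity \eqref{equ}. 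You instead observe that \(\Delta\) is a Casimir of \eqref{ppl} and that \(\sum_j\{x_j,F\}\,{\bf e}_j=\nabla F\times\nabla\Delta\) identically, so that once Theorem \ref{thmBpoisson} is granted, the first integrals drop out of antisymmetry and the Casimir property, and the Clebsch representation is the same formula read in vector-calculus notation; this is shorter, and it \emph{explains} why \(S^l_{i,k}=\Delta^k\) rather than verifying it. What it costs is that the entire burden is transferred onto Theorem \ref{thmBpoisson}, whose proof in the paper is itself only sketched, whereas the paper's argument here is self-contained (and indeed could be used to justify \eqref{Bpoisson} rather than the other way around). One bookkeeping caveat: your cross-product identity shows \eqref{Bpoisson} and \eqref{Clebsch} are equivalent up to one overall sign, and the test case \({\rm B}^1_{0,0}=-{\rm N}\), \(\mathfrak{b}^1_{0,0}=-x\) gives \(\nabla\mathfrak{b}^1_{0,0}\times\nabla\Delta=+{\rm N}\); so a sign needs to be fixed consistently in \eqref{pb}, \eqref{Bpoisson} and \eqref{Clebsch} — but this is inherited from the paper, not a gap in your reasoning, and it does not affect the first-integral, functional-independence, or invariant-ring conclusions, which in your write-up agree in substance with the paper's.
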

\bpr
By equation \eqref{pb}, Leibniz rule and \({\rm N}z= \{x, z\}= 2y,\) we have \(\mathfrak{b}_{i,0}^l:=- \frac{{\rm N}^{l+1}\mb{{{z}}^{i+1}}}{(i+1)\kappa_{l+1, 2i+2}}.\)
Hence, equation \eqref{Blik} and Lemma \ref{A.4} imply
\bas{\rm B}^l_{i,k}(\mathfrak{b}^l_{i,0})
&=&\frac{(2i+1-l)!\big(l{\rm N}^{l+2}\mb{{{{z}}^{i+1}}}{\rm N}^{l-1}\mb{{{z}}^{i}}+2(i-l){\rm N}^{l+1}\mb{{{z}}^{i+1}}{\rm N}^{l} \mb{{{{z}}^{i}}}
-(2i+2-l){\rm N}^{l}\mb{{{z}}^{i+1}}{\rm N}^{l+1} \mb{{{{z}}^{i}}}\big)}{-\Delta^{-k}(l+1)^{-1}\left(i+1\right)\left(2i+2\right)!}.
\eas Now we claim that
\ba\label{SC}
l{\rm N}^{l+2}\mb{{{{z}}^{i+1}}}{\rm N}^{l-1}\mb{{{z}}^{i}}+2(i-l){\rm N}^{l+1}\mb{{{z}}^{i+1}}{\rm N}^{l} \mb{{{{z}}^{i}}}-(2i+2-l){\rm N}^{l}
\mb{{{z}}^{i+1}}{\rm N}^{l+1} \mb{{{{z}}^{i}}}=0.
\ea
Equality \eqref{SC} is trivial for the case \(l=0.\) Let \(l\neq0,\) \(l:=2s+r,\) \(r=0\) and \(1.\) By equation \eqref{Nqq1Nqq1}, we have
\bas
&{\rm B}^l_{i,k}(\mathfrak{b}^l_{i,0})= \sum_{n=0}^{l}f_r(n){{{x}}^{l-n}{{z}}^{2{i}-l-n}{{y}}^{2n+1}},&
\eas where \(f_r(n):=\sum_{p=0}^n F_r(n,p)\) for all \(0\leqslant n\leqslant l,\)
\bas
&F_{0}(n,p):={\frac{(2s)!{i}!({i}+1)!(2s+1)!{2}^{2n+2}({
i}+p-n-s-1)!^{-1}}{(2p)!(s-p)!({i}-p-s)!(s-n+p-1)!(2n-2p)!}}\big({\frac{(s+1)({i}+p-n-s)^{-1}}{(s-p+1)(2n-2p+1)}}+{\frac{(i-2s)({i}+p-n-s)^{-1}}{(2p+1)(s+p-n)}}
-{\frac{(i-s+1)({i}-p-s+1)^{-1}}{(2n-2p+1)(s+p-n)}}\big) &
\eas and \(F_{1}(n,p)\) is given by
\bas
&\frac{{2}^{2n+1}i!(i+1)!(2s+1)!(2s+2)!(i-n+p-s-1)!^{-1}}{(s-n+p)!(2p)!(s-p)!(2n-2p)!(i-p-s-1)!}
\Big(\frac{(2s+3)(i-n+p-s)^{-1}}{(s+1-p)(2p+1)}+\frac{2(i-2s-1)(i-p-s)^{-1}}{(s+1-p)(2n-2p+1)}-\frac{(2i+1-2s)(i-p-s)^{-1}}{(2p+1)(s+1-n+p)}\Big).&
\eas
Now we follow Zeilberger's algorithm \cite [Chapter 6]{petkovsek2006b} to prove \(f_r(n)=0.\) Let
\bas
&G_0(n,p):=\frac{p\left(2s-i\right)\left(2ip+2{s}^{2}+5p-2is-2i-2n-5\right)}{\left(2n-2p+3\right)\left(n-p+1\right)(2p)!
\left(s-p+1\right)!\left(2n-2p+1\right)!\left(i-p-s+1\right)!\left(s+p-n-1\right)!\left(i+p-n-s-1\right)!}.&
\eas Then,
\ba\label{sumZe}
&-2(2n+3)F_0(n+1,p)+(n-i-1)F_0(n,p)=G_0(n,p+1)-G_0(n,p).&
\ea
Next, we add both sides of the equality \eqref{sumZe} over \(p\) for all \(0\leqslant p\leqslant n-1.\) Hence \(G_0(n,n)\) is given by
\bas
&(n-i-1)f_0(n)-(4n+6)f_0(n+1)+(4n+6)\big(F_0(n+1, n)+F_0(n+1, n+1)\big)-(n-i-1)F_0(n,n).&
\eas On the other hand \(G_0(n,n)= 2(2n+3)\big(F_0(n+1, n)+F_0(n+1, n+1)\big)-(n-i-1)F_0(n,n)\) and
\bas
&-2(2n+3)f_0(n+1)+(n-i-1)f_0(n)=0.&
\eas
Thereby, \(f_0(n)=\frac{f_0(0)}{2^n} \prod_{j=0}^{n-1} \frac{j-i-1}{2j+3}\); also see \cite[page 103]{petkovsek2006b}.
Since
\bas
&f_0(0)=F_0(0, p)=\frac{\frac{1}{i-s}+\frac{i-2s}{s(i-s)}-\frac{1}{s}}{\left(s-1\right)!s!\left(i-s\right)!\left(i-s-1\right)!}=0,&
\eas \(f_0(n)=0\) for any \(n.\) Now let \(l=2s+1,\) \ie \(r:=1.\) Let
\bas
&G_1(n,p)=\frac{p{2}^{2n+2}2i!(i+1)!(2s+1)!(2s+2)!(2s+3)(2i+1-2s)(i-2s-1)}{(i-p-s)(2n-2p+1)(s+1-p)(n-p+1)(3+2n-2p)(i-p-s-1)!(2n-2p)!(s-p)!(2p)!(i-n+p-s-1)!(s-n+p)!}&
\eas and thus,
\bas
(-3-2n)F_1(n+1,p)+2(n-i-1)F_1(n,p)=G_1(n,p+1)-G_1(n,p).
\eas
Hence,
\bas
-(2n+3)f_1(n+1)+2(n-i-1)f_1(n)=0,
\eas \(f_1(0)=F_1(0,0)=0\) and finally, \(f_1(n)=0.\) Hence, \({\rm B}^{l}_{i,k}(\mathfrak{b}^l_{i,0})=0\) and \(\mathfrak{b}^l_{i,0}\) is an invariant function
for \({\rm B}^{l}_{i,k}.\) Equation \eqref{p0} and Theorem \ref{Solenoid} give rise to \({\rm B}^{l}_{i,k}(\Delta)={\nabla}(\Delta)\cdot{\rm B}^l_{i,k}=0.\) Hence,
\(\Delta\) is also a first integral for \({\rm B}^{l}_{i,k}\).

By Lemma \ref{A.4},
\bas
&\nabla{\mathfrak{b}^l_{i,0}}=\frac{1}{{{\kappa_{l+1, 2i+2}}}}\Big({{ l(l+1)\N^{l-1}\mb{{{z}}^{i}}},{2(l+1) \N^{l}\mb{{{z}}^{i}}},{ \N^{l+1}\mb{{{z}}^{i}}}}\Big).&
\eas
Since \({\rm B}^l_{i,k}\) is tangent to the level surfaces of \(\mathfrak{b}^l_{i,0}\) and \(\Delta\) for any \((x, y, z)\in \mathbb{R}^3,\) there exists a
function \(S^l_{i, k}(x, y, z)\) such that
\bas
&{\rm B}^l_{i, k}=S^l_{i, k}\;\nabla  {\mathfrak{b}^l_{i,0}} \times\nabla \Delta.&
\eas Hence,
\ba\label{EE3}
&(S^l_{i, 0}\,\nabla{\mathfrak{b}^l_{i,0}}\times\nabla\Delta)\cdot{{\bf e}_3}-{\rm B}^{l}_{i, 0}\cdot{{\bf e}_3}=
-S^l_{i, 0}\,\frac {l{y}{ \N}^{l-1}\mb{{{z}}^i} +{z}\N^l\mb{{{z}}^i}}{{\kappa}_{ l+1,2\,i+2}}+{\frac{\N^l\mb{{{z}}^{i+1}}}{ 2( i+1) {\kappa}_{
l,2\,i+2 }}}=0.&
\ea
Therefore by equation \eqref{Nq1},
\bas
&\frac{{{\sum_{n=0}^{s}\zeta^{2s,i+1}_n }{}}{{x}}^{s-n}{{y}}^{2n}{{z}}^{i-n-s+1}}{2( i+1) {\kappa}_{l,2i+2 }}
-\frac{lS^l_{i, 0}\sum_{n=1}^{s}\zeta^{2s-1,i}_{n-1} {{x}}^{s-n}{{y}}^{2n}{{z}}^{i-n-s+1}}{{{\kappa}_{ l+1,2i+2} }}-\frac{S^l_{i, 0}
\sum_{n=0}^{s}\zeta^{2s,i}_n {{x}}^{s-n}{{y}}^{2n}{{z}}^{i-n-s+1}}{{{\kappa}_{ l+1,2i+2} }}=0.&
\eas
Thus, \(S^l_{i, 0}=1\) due to
\ba\label{equ}
&\frac{l\zeta^{2s-1,i}_{n-1}+\zeta^{2s,i}_n}{{\kappa}_{ l+1,2i+2} }-\frac{\zeta^{2s,i+1}_n}{ 2( i+1) {\kappa}_{
l,2i+2 }}=0,&
\ea for all \(0\leqslant n\leqslant s,\) and \(\zeta^{2s-1,i}_{-1}=0.\) The condition \(S^l_{i, 0}=1\) implies \(S^l_{i, k}=\Delta^k.\)

Since \(B^l_{i,0}\neq0\) for almost everywhere, the last two claims are immediately concluded from the first and second claim, and Lemma \ref{common}.
\epr

Define the grading function
\ba  \label{grade}
\delta({\rm B}^{l}_{i,k}):={i+2k},
\ea that is, the standard degree of homogeneous vector fields minus one. This makes the space \((\B, [\,-,-\,])\) a graded Lie algebra.
Hence, for \(N\in \mathbb{N}_0\) the vector space
\bas
\B_N:=\Span\{{\rm B}^{l}_{N-2k,k}: l=-1,\ldots ,2(N-2k)+1, k=0,\ldots ,\lfloor\frac{N}{2}\rfloor\},
\eas consists of all \(\delta\)-homogenous vector fields of grade \(N.\) For \(v\in {\B},\) we define (also see \cite[Definition 3.2]{GazorKazemi})
\({\rm Terms}(v):= \cup^3_{p=1}{\rm Terms}\,(v\cdot \mathbf{e}_p),\) while
\bas &{\rm Terms}\,(v\cdot \mathbf{e}_p):= \{\hbox{All monomials contributing in Taylor expansion}\, v\cdot {{\bf e}_p}\}.&
\eas For an instance we have \({\rm Terms}(2x^2+3xy+5)=\{1, x^2, xy\}.\) Thereby, for any two \(\delta\)-grade homogeneous vector fields
\(v_{1}\) and \(v_2,\) where \(\delta(v_1)\neq\delta(v_2),\)
\bas
&{\rm Terms}(v_1\cdot {{\bf e}_p})\cap {\rm Terms}(v_2\cdot {{\bf e}_p})=\varnothing, \,\,\,\hbox{for any}\,\, p=1,2,3.&
\eas When \(i, k\in\mathbb{N}_0,\) \(-1\leqslant l\leqslant 2i+1,\) and \(N:=i+2k,\) we define a condition for a nonnegative integer \(m\) by
\ba\label{Restricted}
&0\leqslant m\leqslant \min\left\{{\lfloor\frac{N}{2}\rfloor, {\lfloor\frac{2N-2k-l+1}{2}\rfloor}, {\lfloor\frac{2k+l+1}{2}\rfloor}}\right\}&
\ea and next, a set \(P^{l}_{i,k}\) by
\bas
&P^{l}_{i,k}:=\{(m_1, m_2, m_3): m_1=l+2(k-m_3), m_2=N-2m_3, \hbox{the condition \eqref{Restricted} holds for } m_3\}.&
\eas

\begin{lem}\label{common}
Let \({\rm B}^{l}_{i,k}\in {\B},\) \((m_1, m_2, m_3)\in P^l_{i, k},\) and \(p\in\{1, 2, 3\}\). Then,
\begin{enumerate}
\item\label{a} \({\rm Terms }({\rm B}^{m_1}_{m_2,m_3}\cdot {{\bf e}_p})\neq \varnothing \) when \(l\neq (3-p)i+2-p.\)
Otherwise, \({\rm Terms }({\rm B}^{m_1}_{m_2,m_3}\cdot {{\bf e}_p})= \varnothing,\) \ie for \((p,l)=(1,2i+1),\) \((p,l)=(2,i)\) and \((p,l)=(3,-1).\)
\item\label{b} When \({\rm Terms }({\rm B}^{l}_{i,k}\cdot {{\bf e}_p})\neq\varnothing,\)
\bes
{\rm Terms }({\rm B}^{m_1}_{m_2,m_3}\cdot {{\bf e}_p})\subseteq{\rm Terms }({\rm B}^{l}_{i,k}\cdot {{\bf e}_p})\,\,\hbox{and} \,\, P^{l}_{i,k}=P^{m_1}_{m_2,m_3}.
\ees
\item\label{c} Let \({\rm Terms }({\rm B}^{l}_{i,k})={\rm Terms }({\rm B}^{l'}_{i',k'}).\) Then, natural numbers \(l-l'\) and \(i-i'\) are even. Furthermore, the
inequalities \(k\neq k', l\neq l'\) and \(i\neq i'\)  are equivalent. In particular, \(k=k'\) implies \((l,i,k)= (l',i',k').\)
\item\label{d} The set
\bas
\{{\rm Terms}({\rm B}^{-1}_{N,0}\cdot {{\bf e}_p}), {\rm Terms}({\rm B}^{N}_{N,0}\cdot {{\bf e}_p}),
{\rm Terms}({\rm B}^{2N+1}_{N,0}\cdot {{\bf e}_p}), {\rm Terms}({{\rm B}}^{l}_{N,0})\; \hbox{ for }\; 0\leq l \leq 2N, l\neq N\}
\eas is a partition for
\bas
\cup\{{\rm Terms}(v_N\cdot {{\bf e}_p}), v_N\in \B\}.
\eas
\item\label{e} \({\rm Terms}(\mathfrak{b}^{l}_{i,k})={\rm Terms}(\mathfrak{b}^{m_1}_{m_2,m_3})\neq\varnothing.\)
\item\label{f}
For any \(0\neq v\in{\mathscr{B}}\) and nonnegative integer \(N,\) there exists a unique polynomial vector \(v^{j}_{N-2k,0}\in
{\mathbb{R}}\{{\rm B}^{j}_{N-2k,0}\}\) for each \(-1\leqslant j\leqslant 2N-4k+1\) and \(0\leqslant
k\leqslant \min\{\lfloor\frac{2N-j+1}{4}\rfloor, \lfloor \frac{N}{2}\rfloor\}\) so that
\ba\label{vterm}
&v=\sum_{N=0}^{\infty}\sum^{2N+1}_{j=-1}\sum_{k=0}^{\min\{\lfloor\frac{2N-j+1}{4}\rfloor, \lfloor \frac{N}{2}\rfloor\}} v^{j}_{N-2k,0}\Delta^k.&
\ea
Furthermore, \(v=0\) if and only if \( v^{j}_{N-2k,0}=0\) for all \(j, k, N.\)
\end{enumerate}
\end{lem}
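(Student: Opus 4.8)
The plan is to reduce all six statements to the monomial content of a single iterated action $\N^{q}\mb{z^{m_{2}+1}}$. Because $\N(\Delta)=0$, equation \eqref{Blik} shows the $p$-th component of ${\rm B}^{l}_{i,k}$ is $\Delta^{k}$ times a fixed scalar multiple of $\N^{\,l+3-p}\mb{z^{i+1}}$, the scalars being $\tfrac{2i-l+1}{(i+1)\kappa_{l+2,2i+2}}$, $\tfrac{i-l}{(i+1)\kappa_{l+1,2i+2}}$ and $\tfrac{-(l+1)}{(i+1)\kappa_{l,2i+2}}$ for $p=1,2,3$. I track a monomial $x^{a}y^{b}z^{c}$ by its ${\rm H}$-eigenvalue $2(c-a)$; as $\N$ lowers this by $2$ and $\Delta$ preserves it, the $p$-component of ${\rm B}^{l}_{i,k}$ is supported on the single line $c-a=i-l-2+p$ in total degree $N+1$, where $N=i+2k$ is the grade \eqref{grade}. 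Substituting $m_{1}=l+2(k-m_{3})$ and $m_{2}=N-2m_{3}$ gives $m_{2}-m_{1}=i-l$, $m_{2}+2m_{3}=N$, $2m_{3}+m_{1}=2k+l$ and $2N-2m_{3}-m_{1}=2N-2k-l$, all free of $m_{3}$; since the bound \eqref{Restricted} depends only on $N$, $2k+l$ and $2N-2k-l$, this already yields $P^{l}_{i,k}=P^{m_{1}}_{m_{2},m_{3}}$ and the $m_{3}$-invariance of the weight line and the parity of $l$ (and $m_{3}=k$ always meets \eqref{Restricted}, so the base index lies in its own set). Part (a) is then immediate: the three scalars above vanish exactly at $l=2i+1,\,i,\,-1$, i.e. at $l=(3-p)i+2-p$, where the component is zero and ${\rm Terms}$ is empty; otherwise the scalar is nonzero and the vanishing criterion of Lemma \ref{A.1} (that $\N^{q}\mb{z^{i+1}}=0$ only if $\lfloor q/2\rfloor<q-i-1$), checked over $-1\le l\le 2i+1$, gives $\N^{\,l+3-p}\mb{z^{i+1}}\neq0$ and a nonempty component.

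For the inclusion of (b) and the equality of (e) I compute the exact exponent interval on each weight line. Combining the strictly positive $\zeta$-expansion of $\N^{\,m_{1}+3-p}\mb{z^{m_{2}+1}}$ from \eqref{Nq1}--\eqref{Nq11} with the binomial expansion of $\Delta^{m_{3}}$, the attainable $x$-exponents of $\Delta^{m_{3}}\N^{\,m_{1}+3-p}\mb{z^{m_{2}+1}}$ fill the interval with endpoints $\max\{0,\,l-i+2-p\}$ and $\lfloor (l+3-p)/2\rfloor+k$; using $m_{1}+2m_{3}=2k+l$ and $m_{2}-m_{1}=i-l$ both endpoints are seen to be independent of $m_{3}$. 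The two extreme monomials have minimal and maximal $y$-degree, are produced without any cancellation, and carry a single positive $\zeta$-coefficient, so they always survive; hence every nonempty $p$-component over $P^{l}_{i,k}$ shares these endpoints, which gives the subset claim of (b) (proper precisely when the smaller index empties the component via (a)) and, applied to the scalars $\mathfrak{b}^{l}_{i,k}=\Delta^{k}\mathfrak{b}^{l}_{i,0}$ of Corollary \ref{poisson} whose normalizing constant never vanishes, the nonempty equality of (e).

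Items (c), (d) and (f) then follow. For (c), equality of ${\rm Terms}$ forces equal total degree ($i+2k=i'+2k'$) and equal sets of ${\rm H}$-eigenvalues ($i-l=i'-l'$); solving gives $l-l'=i-i'=2(k'-k)$, whence both differences are even, the three inequalities $k\neq k'$, $l\neq l'$, $i\neq i'$ are equivalent, and $k=k'$ collapses the triples. For (d) one takes $k=0$, so $i=N$ and every component is a pure $\N$-power with strictly positive $\zeta$-coefficients, hence a gap-free monomial interval; the ${\rm H}$-eigenvalue of a grade-$N$ monomial in the $p$-component pins down a unique admissible $l$, the three extreme indices $l\in\{-1,N,2N+1\}$ absorb the boundary lines where one component collapses by (a), and distinctness of eigenvalues makes the listed sets pairwise disjoint and exhaustive. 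Finally (f) is the bookkeeping corollary: writing $v=\sum_{N}v_{N}$ by grade and using ${\rm B}^{j}_{N-2k,k}=\Delta^{k}{\rm B}^{j}_{N-2k,0}$, existence is the spanning of $\B_{N}$ and uniqueness (hence the clause that $v=0$ iff all coefficients vanish) is the linear independence already proved in Theorem \ref{3DVects}, with the ranges of $j$ and $k$ being exactly the admissibility bounds $-1\le j\le 2(N-2k)+1$ and $0\le k\le\min\{\lfloor\tfrac{2N-j+1}{4}\rfloor,\lfloor\tfrac N2\rfloor\}$.

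The step I expect to fight hardest is the \emph{gap-freeness} used in (b) and (e): one must show that multiplying the positive-coefficient polynomial $\N^{q}\mb{z^{i+1}}$ by $\Delta^{k}=(xz-y^{2})^{k}$, whose expansion alternates in sign, never annihilates an interior monomial on the weight line even though many cancellations occur. I would establish this either representation-theoretically, since $\Delta^{k}z^{i+1}$ is a highest-weight vector generating an irreducible $\Sl$-module in which each weight vector has interval support, or, failing a clean module argument, by exhibiting a Zeilberger-type closed form for the relevant alternating sum of products of $\zeta$-coefficients and binomial coefficients, in the spirit of the computations already used in Theorem \ref{int}. The remaining difficulty is purely organizational: keeping the boundary weight lines, where a component degenerates, correctly assigned to the three extreme generators in the partition (d).
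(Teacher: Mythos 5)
Your overall route is the same as the paper's: both arguments reduce everything to the monomial content of \(\Delta^{m_3}\N^{m_1+3-p}\mb{z^{m_2+1}}\) via equation \eqref{Blik} and Lemma \ref{A.1}, extract the two invariants ``total degree \(=N+1\)'' and ``\(c-a=\)const'' (your \({\rm H}\)-weight line is exactly the paper's exponent system \eqref{e1}, which yields \(i-i'=l-l'=2(k'-k)\)), and then deduce (c), (d), (f) by linear algebra from (b), (e) and Theorem \ref{3DVects}. The genuine gap is the one you flag yourself and then defer: the inclusion in (b) and the equality in (e) are \emph{positive} statements about coefficients, namely that for every admissible \(y\)-degree \(2m+r\) on the weight line the alternating sum \(\sum_{j}(-1)^{j}\binom{m_3}{j}\zeta^{\,q,\,m_2+1}_{m-j}\) arising from multiplying the positive-coefficient polynomial \(\N^{q}\mb{z^{m_2+1}}\) by \(\Delta^{m_3}=(xz-y^2)^{m_3}\) is nonzero. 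Neither of your two candidate strategies is carried out, so as written you have only shown that \({\rm Terms}({\rm B}^{m_1}_{m_2,m_3}\cdot{\bf e}_p)\) lies in the \emph{a priori} monomial support of \({\rm B}^{l}_{i,k}\cdot{\bf e}_p\), not in its actual set of surviving terms. To be fair, the paper's proof of (b) establishes only the converse implication (a monomial shared by two generators of the same grade forces the index relation, whence \(P^{l}_{i,k}=P^{m_1}_{m_2,m_3}\)) and is equally silent on cancellation, so you have correctly isolated the weak point rather than missed an argument; but your proposal does not close it, and (f) depends on it through the decomposition \(w_N=\sum_j w_N^j\).

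A second, smaller problem concerns part (a). The statement is about \({\rm B}^{m_1}_{m_2,m_3}\), and the vanishing of its \(p\)-th component is governed by the scalars \(2m_2-m_1+1=2i-l+1+2(k-m_3)\) for \(p=1\) and \(m_1+1=l+1+2(k-m_3)\) for \(p=3\), neither of which is \(m_3\)-invariant; only the \(p=2\) scalar \(m_2-m_1=i-l\) is. Your argument checks the scalars at the base index \((l,i)\) only. Concretely, \((3,2,0)\in P^{1}_{0,1}\) with \(l=2i+1\), yet \({\rm B}^{3}_{2,0}\cdot{\bf e}_1=\tfrac{2}{3}x^{2}y\neq0\); likewise \({\rm B}^{1}_{2,0}\cdot{\bf e}_3=-\tfrac{2}{3}yz^{2}\neq0\) although \((1,2,0)\in P^{-1}_{0,1}\). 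So the literal claim of (a) fails for \(p\in\{1,3\}\) once \(m_3\neq k\), and your proof does not detect this because it silently replaces \((m_1,m_2)\) by \((l,i)\). This is arguably a defect of the lemma's formulation (the paper's one-line proof of (a) has the same blind spot), but a complete treatment must either restate (a) with the condition \(m_1=(3-p)m_2+2-p\) or restrict it to \(m_3=k\), and must then reconcile this with the inclusion claimed in (b).
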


\bpr
The claim in part \ref{a} directly follows from equation \eqref{Blik}. For claim \ref{b}, let \({\rm B}^{l'}_{i',k'}\) be the vector
field in \(\B\) with \((l,i,k)\neq (l',i',k').\) Here we only consider the case \(p=3.\) Using the polynomial expansion for
\({\Delta}^{k}, {\Delta}^{k'},\) Lemma \ref{A.1} and definition \eqref{bm}, the monomials appearing in \({\rm B}^{l}_{i,k}\) and \({\rm B}^{l'}_{i',k'}\) follow
\bas
&P(x, y, z) ={x}^{s-n_1+p_1}{y}^{2n_1+r+2(k-p_1)}{z}^{i-n_1-s-r+p_1}, Q(x, y, z)={x}^{s'-n_2+p_2}{y}^{2n_2+r'+2(k'-p_2)}{z}^{i'-n_2-s'-r'+p_2},&
\eas
for some  \(n_1, n_2, p_1, p_2,\) where \(l=2s+r\) and \(l'=2s'+r'.\) Let \(P= Q.\) Then,
\ba
\label{e1}
&s-s'=n_1-n_2+p_2-p_1, r-r'=\frac{n_2-n_1+k'-k+p_1-p_2}{2^{-1}}, i-i'=n_1-n_2+s-s'+r-r'+p_2-p_1.\qquad&
\ea
By substituting the first equation in \eqref{e1} into the third one, we have
\ba\label{e5}
i-i'=l-l'.
\ea
Since the vector fields \({\rm B}^{l}_{i,k}\) and \({\rm B}^{l'}_{i',k'}\) have the same \(\delta\)-grade,
\ba\label{e4}
i-i'=2(k'-k).
\ea Therefore, \(i-i'=l-l'=2(k'-k)\) and \(i+2k=i'+2k'.\) Hence, the inequalities \(i'\geq 0\) and \(-1\leq l'\leq 2i'+1\) are equivalent
to the inequalities \eqref{Restricted} on \(m:=k'\). The equality \(P^{l}_{i,k}=P^{m_1}_{m_2,m_3}\) is due to the definition.

Part \ref{c} follows equations \eqref{e4} and \eqref{e5}. The claim \ref{d} is a direct corollary of the claim \ref{b}. The proof of part
\ref{e} is similar to claims \ref{b}--\ref{d} and the claim is consistent with equation \eqref{Clebsch}.

Finally for the claim \ref{f}, consider
\(v:=\sum_{N=0}^{\infty}w_N\) for \(w_N\in \B_N.\) Then by claim \ref{e}, for any \(N\) we have
\bas
&w_{N}:=\sum_{j=-1}^{2N+1} w_{N}^j,\quad\, w_{N}^j\cdot {\bf e}_p\in \Span\, {\rm Terms}({\rm B}^j_{N,0}\cdot {\bf e}_p).&
\eas
Now we Taylor-expand \(w_N^j\) in terms of \(\Delta,\) that is,
\ba\label{VnoDelta}
w^j_{N}=\sum_{k=0}^{\min\{\lfloor\frac{2N-j+1}{4}\rfloor, \lfloor \frac{N}{2}\rfloor\}} v^{j}_{N-2k,0}\Delta^k,\quad \hbox{where}\quad
v^{j}_{N-2k,0}\cdot {\bf e}_p\in \Span\,
{\rm Terms}({\rm B}^{j}_{N-2k,0}\cdot {\bf e}_p).
\ea Hence, \(v^{j}_{N-2k,0}\) does not share any monomial vector field with any \({\rm B}\)-terms except \({\rm B}^{j}_{N-2k,0}.\)
This is because of the claim in part \ref{b} and that we have already excluded the powers of \(\Delta\) in \eqref{VnoDelta}. Therefore,
the expansion of \(v^{j}_{N-2k,0}\) in terms of the \({\rm B}\)-term generators of \(\B\) only includes \({\rm B}^{j}_{N-2k,0}.\)
\epr

\begin{rem}\label{commonac}
Given Lemma \ref{common}, the same statements trivially hold for other \(\Sl\)-generated vector fields from spaces \(\mathscr{A}\) and \(\mathscr{C}\) defined in
Appendix \ref{SecAppend}. In particular, for any \(v\in \mathscr{C}\) and \(w\in \mathscr{A},\) there exist uniquely determined constants \(c^{j}_{N, k}\) and
\(a^j_{N, k}\) so that
\ba\label{wCterm}
&v=\sum_{N=0}^{\infty}\sum^{2N}_{j=0}\sum_{k=0}^{\min\{\lfloor\frac{2N-j}{4}\rfloor, \lfloor \frac{N}{2}\rfloor\}}
c^{j}_{N, k}{\rm C}^{j}_{N-2k, 0} \Delta^k,\qquad&\\\label{wAterm}
&w=\sum_{N=0}^{\infty}\sum^{2N+2}_{j=-2}\sum_{k=0}^{\min\{\lfloor\frac{2N-j+2}{4}\rfloor, \lfloor \frac{N}{2}\rfloor\}} a^j_{N, k} {\rm A}^{j}_{N-2k, 0}\Delta^k.&
\ea
\end{rem}

The following theorem provides a concrete characterization for vector fields in \(\B\).
\begin{thm}\label{Bdivint}
Let \(v\) be a three dimensional vector field. The conditions \({\rm div}(v)=0\) and \(	v(\Delta)=0\) hold if and only if \(v\in \B.\)
\end{thm}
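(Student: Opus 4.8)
The forward implication needs nothing new: if $v\in\B$ then $\text{div}(v)=0$ is Theorem \ref{Solenoid}, and since Theorem \ref{int} shows $\Delta$ is a first integral of every generator ${\rm B}^l_{i,k}$, linearity gives $v(\Delta)=0$. So the whole content is the converse, and the plan is to peel $v$ apart with the complete decomposition of Theorem \ref{3DVects}.

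First I would write $v=v_\B+v_\mathscr{A}+v_\mathscr{C}$ using the uniqueness in Theorem \ref{3DVects}, where $v_\B$ is the component in the $\M$-generated family $\B$ and $v_\mathscr{A}$, $v_\mathscr{C}$ are the components in the two complementary $\Sl$-families generated by $\frac{\D}{\D x}$ and ${\rm E}$ (Appendix \ref{SecAppend}), expanded as in \eqref{vterm}, \eqref{wAterm}, \eqref{wCterm}. By the forward direction $v_\B$ is solenoidal and kills $\Delta$, so both hypotheses reduce to
\[
\text{div}(v_\mathscr{A}+v_\mathscr{C})=0,\qquad (v_\mathscr{A}+v_\mathscr{C})(\Delta)=0 .
\]
Thus the theorem is equivalent to the statement that the pair of linear maps $\text{div}$ and $L_\Delta\colon v\mapsto v(\Delta)$ is \emph{jointly injective} on $\mathscr{A}\oplus\mathscr{C}$; once this is established, $v_\mathscr{A}=v_\mathscr{C}=0$ and $v=v_\B\in\B$.

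The organising tool I would use is that both maps are $\Sl$-equivariant. From $\text{div}[X,v]=X(\text{div}\,v)-v(\text{div}\,X)$ and $[X,v](\Delta)=X(v(\Delta))-v(X(\Delta))$, together with the fact that $\M,\N,{\rm H}$ are simultaneously solenoidal and annihilate $\Delta$, one gets $\text{div}\circ\text{ad}_X=X\circ\text{div}$ and $L_\Delta\circ\text{ad}_X=X\circ L_\Delta$ for $X\in\{\M,\N,{\rm H}\}$. Because each family is built from a highest-weight vector by applying $\text{ad}_\N$, equivariance lets me evaluate both maps on the single generators $z^i\Delta^k\frac{\D}{\D x}$ and $z^i\Delta^k{\rm E}$ and then propagate the answers along each string. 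A direct computation gives $\text{div}(g\,{\rm E})=(\deg g+3)g$ and $(g\,{\rm E})(\Delta)=2g\Delta$ on the ${\rm E}$-family, and $\text{div}\big(z^i\Delta^k\frac{\D}{\D x}\big)=k\,z^{i+1}\Delta^{k-1}$, $\big(z^i\Delta^k\frac{\D}{\D x}\big)(\Delta)=z^{i+1}\Delta^{k}$ on the $\frac{\D}{\D x}$-family, together with the polynomial expansions of Lemma \ref{A.1} to carry these through $\text{ad}_\N^n$.

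The hard part is the joint injectivity itself, and the reason it is delicate is that neither condition separates the two families on its own: $\text{div}$ already has a nonzero kernel mixing $\mathscr{A}$ and $\mathscr{C}$ (for instance $z\Delta\frac{\D}{\D x}-\tfrac15z^2{\rm E}$ is solenoidal), so the argument must use both conditions at once and show that their only common solution is $0$. I would assemble, grade by grade and $\Sl$-isotype by isotype, the small linear system whose coefficients are the four scalar families above and verify that its matrix is nonsingular; equivalently, the Clebsch form \eqref{Clebsch} shows $\B=\{\nabla\phi\times\nabla\Delta\}$ sits inside $\ker\text{div}\cap\ker L_\Delta$, and what remains is to prove that nothing outside $\B$ does. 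The bookkeeping of matching the admissible ranges of $l$ in \eqref{vterm}, \eqref{wAterm}, \eqref{wCterm} against the images of the two maps is where I expect the real work to lie; by contrast the lowest-order check is routine, since $v(\Delta)=0$ forces the constant part to vanish, while trace-freeness together with $L(\Delta)=0$ forces the linear part into $\Sl=\langle\M,\N,{\rm H}\rangle$, which the decomposition already places inside the $\M$-generated family.
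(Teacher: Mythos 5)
Your forward implication and your initial reduction are sound: decomposing $v$ via Theorem \ref{3DVects}, discarding the $\B$-component by Theorems \ref{Solenoid} and \ref{int}, and reducing the claim to the joint injectivity of $({\rm div},\,L_\Delta)$ on $\mathscr{A}\oplus\mathscr{C}$ is exactly the right frame, and your observation that ${\rm div}$ alone has a kernel mixing the two families (your example $z\Delta\frac{\D}{\D x}-\frac15 z^2{\rm E}$ checks out) is a genuinely useful remark. The gap is that the joint injectivity itself --- which is the entire content of the converse --- is the one step you do not carry out. ``Assemble the linear system grade by grade and verify that its matrix is nonsingular'' is a restatement of what has to be proved, not a proof of it, and you say yourself that this is where the real work lies. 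As it stands the argument is a plan with its central lemma missing.

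For comparison, the paper closes this step not by a general nonsingularity check but by exhibiting a triangular structure. Using the Terms-partition (Lemma \ref{common} and Remark \ref{commonac}) it isolates, in each grade and for each index pair $(j,k)$, a scalar multiple of ${\rm A}^{j}_{N-2k,0}\Delta^{k}$ against a scalar multiple of ${\rm C}^{j}_{N-2k,0}\Delta^{k}$. Theorem \ref{divA} gives ${\rm div}({\rm A}^{l}_{i,0})=0$ identically, so on the normalized generators the divergence condition only sees the $\mathscr{C}$-component; since ${\rm div}(g\,{\rm E})=(\deg g+3)g\neq0$, that component must vanish. The condition $w_A(\Delta)=0$ together with $\nabla\Delta\cdot{\rm A}^{l}_{i,k}\neq0$ (the second half of Theorem \ref{divA}) then kills the $\mathscr{A}$-component. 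To complete your version you would need either to reproduce this triangularity or to actually evaluate the $2\times2$ determinants that your $\Sl$-equivariance reduction produces in each block --- and note that your own example shows these blocks are \emph{not} diagonal once positive powers of $\Delta$ appear (e.g.\ ${\rm div}({\rm A}^{-2}_{0,0}\Delta)=z^2\neq0$), so the nonvanishing of those determinants genuinely requires a computation rather than following from the two separate injectivity statements.
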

\bpr Given Theorem \ref{Solenoid} and Theorem \ref{int}(part \ref{FirstInts}), we only need to prove the {\it only if} part.
Let \(v=u+w_C+w_A,\) where \(u\in{\mathscr{B}},\) \(w_A\in \mathscr{A}\) and \(w_C\in \mathscr{C}.\) Hence,
\(\nabla\cdot (w_C+w_A)=0\) and \(w_C(\Delta)+w_A(\Delta).\)
By equations \eqref{wCterm} and \eqref{wAterm},
\bas
&w_C+w_A=&\sum_{N=0}^{\infty}\Big(\sum^{2N}_{j=0}\sum_{k=0}^{\min\{\lfloor\frac{2N-j}{4}\rfloor, \lfloor \frac{N}{2}\rfloor\}} {w_C}^{j}_{N-2k, 0}\Delta^k
+\sum^{2N+2}_{j=-2}\sum_{k=0}^{\min\{\lfloor\frac{2N-j+2}{4}\rfloor, \lfloor \frac{N}{2}\rfloor\}} {w_A}^{j}_{N-2k, 0}\Delta^k\Big),
\eas where \({w_C}^{j}_{N-2k, 0}\in\mathbb{R}\{{\rm C}^j_{N-2k,0}\}\) and \({w_A}^{j}_{N-2k, 0}\in \mathbb{R}\{{\rm A}^j_{N-2k,0}\}\). Since
\bes
{\rm Terms}((\mathbb{R}\{{\rm C}^{j_1}_{N-2k_1,0}\}+\mathbb{R}\{{\rm A}^{j_1}_{N-2k_1,0}\})\cdot {\bf e}_p)
\cap{\rm Terms}((\mathbb{R}\{{\rm C}^{j_2}_{N-2k_2,0}\}+\mathbb{R}\{{\rm A}^{j_2}_{N-2k_2,0}\})\cdot {\bf e}_p)= \varnothing
\ees for all \(p=1, 2, 3\) when \((j_1,k_1)\neq(j_2,k_2),\)
\bes{\rm div}\,{w_C}^{j}_{N-2k, 0}+ {\rm div}\,{w_A}^{j}_{N-2k, 0}=0\quad \hbox{ and }\quad {w_C}^{j}_{N-2k, 0}(\Delta)+{w_A}^{j}_{N-2k, 0}(\Delta)=0.\ees
By Theorem \ref{divA}, \({\rm div}\,{w_A}^{j}_{N-2k, 0}=0\) and so \({\rm div}\,{w_C}^{j}_{N-2k, 0}=0\) for all \(j, k, N.\)
By Lemma \ref{A.4}, equation \eqref{CEul}, and item \ref{f} in
Lemma \ref{common}, the condition \({\rm div}\,{w_C}^{j}_{N-2k, 0}=0\) implies that \({w_C}^{j}_{N-2k, 0}=0.\) Thereby, \({w_A}^{j}_{N-2k, 0}(\Delta)=0.\)
Now Theorem \ref{divA} concludes the proof through the equation \({w_A}^{j}_{N-2k, 0}=0\) for all \(j, k, N.\) Indeed, \(w_A= w_C=0\) and \(v=u\in \B.\)
\epr

\begin{thm} The following hold.
\begin{itemize}
\item[1.] The set \(\{ {\rm Terms}({\mathfrak {b}}^l_{N,0}):-1\leqslant l\leqslant 2N+1\}\) forms a partition for \(\mathfrak{B}_N.\)
\item[2.] For \(p=1, 2, 3,\) and \((x_1, x_2, x_3):= (x, y, z),\) \(\{x_p,{{\rm Terms}(\mathfrak{b}^l_{i,k})}\}\subseteq {{\rm Terms}(\mathfrak{b}^{l+2-p}_{i,k})}.\) When \((p, l)\neq (1, 2i+1),\)
\((p, l)\neq (2, i)\) and \((p, l)\neq (3, -1),\) \({\rm Terms}{\{x_p,{{\rm Terms}(\mathfrak{b}^l_{i,k})}\}}= {{\rm Terms}(\mathfrak{b}^{l+2-p}_{i,k})}.\)
\item[3.] \(\ker{\rm ad}_{x}= \mathbb{R}[[x]],\) \(\ker{\rm ad}_{y}= \mathbb{R}[[y, xz]],\) and \(\ker{\rm ad}_{z}= \mathbb{R}[[z]].\)
\end{itemize}
\end{thm}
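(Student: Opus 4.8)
The plan is to handle all three parts through one mechanism: realize each operator \(\ad_{x_p}=\{x_p,\cdot\}\) as a first order differential operator on \(\mathbb{R}[[x,y,z]]\) and, for every monomial \(x^ay^bz^c\), keep track of the single integer \(2a+b-1\) together with the total degree \(a+b+c\). From the Leibniz rule and the generators \eqref{ppl} I would first record \(\ad_x(x^ay^bz^c)=b\,x^{a+1}y^{b-1}z^c+2c\,x^ay^{b+1}z^{c-1}\), \(\ad_y(x^ay^bz^c)=(c-a)\,x^ay^bz^c\), and \(\ad_z(x^ay^bz^c)=-2a\,x^{a-1}y^{b+1}z^c-b\,x^ay^{b-1}z^{c+1}\); as differential operators this is \(\ad_x=\N\), \(\ad_y=\tfrac12{\rm H}\) and \(\ad_z=-\M\), consistent with \(\Psi(x)=\N\), \(\Psi(y)=\tfrac12{\rm H}\), \(\Psi(z)=-\M\) from Lemma \ref{iso}. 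The bookkeeping fact I would extract from Corollary \ref{poisson} is that a monomial \(x^ay^bz^c\) occurring in \(\mathfrak b^l_{i,k}\) always satisfies \(2a+b-1=l+2k\) and \(a+b+c=i+1+2k\); indeed in the even expansion \(a=s-j,\ b=2j+1\) and in the odd expansion \(a=s-j+1,\ b=2j\), so \(2a+b-1=l\) when \(k=0\), and each factor \(\Delta\) raises this invariant by \(2\).

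For part~1, I would send each degree \(N+1\) monomial \(x^ay^bz^c\) to the index \(l:=2a+b-1\); its parity equals that of \(b\), and \(-1\le l\le 2N+1\) because \(0\le 2a+b\le 2(a+b+c)\). By Lemma \ref{common}(\ref{e}) every grade \(N\) generator satisfies \({\rm Terms}(\mathfrak b^l_{N-2k,k})={\rm Terms}(\mathfrak b^{l+2k}_{N,0})\), so the monomials occurring anywhere in \(\mathfrak B_N\) are exactly those occurring in the \(\mathfrak b^l_{N,0}\) for \(-1\le l\le 2N+1\). The invariant \(2a+b-1\) shows each such monomial lies in exactly one class \({\rm Terms}(\mathfrak b^l_{N,0})\), and conversely every degree \(N+1\) monomial of the correct parity is produced with nonzero coefficient by solving \(a=s-j,\ b=2j+1\) (respectively \(a=s-j+1,\ b=2j\)) and checking the corresponding \(\zeta\)-coefficient in Corollary \ref{poisson} is nonzero. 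Hence the \(2N+3\) sets \({\rm Terms}(\mathfrak b^l_{N,0})\) are pairwise disjoint and cover \(\mathfrak B_N\), which is the asserted partition.

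For part~2, the inclusion is immediate from the monomial formulas above: for \(m\in{\rm Terms}(\mathfrak b^l_{i,k})\) every monomial appearing in \(\{x_p,m\}\) has the same total degree as \(m\) and invariant \((l+2k)+1,\ l+2k,\ (l+2k)-1\) for \(p=1,2,3\), so by part~1 it lies in \({\rm Terms}(\mathfrak b^{l+2-p}_{i,k})\), giving \(\{x_p,{\rm Terms}(\mathfrak b^l_{i,k})\}\subseteq{\rm Terms}(\mathfrak b^{l+2-p}_{i,k})\). For the reverse inclusion I would use the component identities read off from \eqref{Bpoisson} in the proof of Theorem \ref{thmBpoisson}, namely \(\{x,\mathfrak b^l_{i,k}\}=(l-2i-1)\mathfrak b^{l+1}_{i,k}\), \(\{y,\mathfrak b^l_{i,k}\}=(l-i)\mathfrak b^{l}_{i,k}\) and \(\{z,\mathfrak b^l_{i,k}\}=(l+1)\mathfrak b^{l-1}_{i,k}\), whose scalars vanish precisely in the excluded cases \((1,2i+1)\), \((2,i)\), \((3,-1)\). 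Off these cases the scalar is nonzero, so \({\rm Terms}(\mathfrak b^{l+2-p}_{i,k})={\rm Terms}(\{x_p,\mathfrak b^l_{i,k}\})\subseteq\{x_p,{\rm Terms}(\mathfrak b^l_{i,k})\}\), and together with the inclusion this yields equality. The point that needs care, and the main obstacle, is that \({\rm Terms}(\{x_p,\mathfrak b^l_{i,k}\})\) could a priori be strictly smaller than \(\{x_p,{\rm Terms}(\mathfrak b^l_{i,k})\}\) through cancellation among the images of individual monomials; the sandwiching between the term-by-term inclusion and the nonvanishing-scalar identity is exactly what closes this gap.

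For part~3, having identified \(\ad_x=\N\), \(\ad_y=\tfrac12{\rm H}\) and \(\ad_z=-\M\), each kernel is the ring of invariants of the corresponding operator. The operator \(\ad_y=\tfrac12{\rm H}\) is semisimple with \(\tfrac12{\rm H}(x^ay^bz^c)=(c-a)x^ay^bz^c\), so its kernel is spanned by the monomials with \(a=c\), that is by \(y\) and \(xz\), giving \(\ker\ad_y=\mathbb{R}[[y,xz]]\). For \(\ad_z=-\M\) I would invoke the lemma already established in this paper that \(\mathbb{R}[[z,\Delta]]\) is the ring of invariants of \(\M\), so that \(\ker\ad_z=\ker\M\). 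For \(\ad_x=\N\) I would use the involution interchanging \(x\) and \(z\): it fixes \(\Delta\) and carries \(\N\) to \(\M\), transferring the invariant-ring computation from \(\M\) to \(\N\). I would also note that \(\Delta\) is the Casimir of the bracket \eqref{ppl}, i.e. \(\{x,\Delta\}=\{y,\Delta\}=\{z,\Delta\}=0\), which explains why it belongs to all three kernels. The delicate point here is that \(\N\) and \(\M\) are not semisimple, so one cannot read the kernel off monomial eigenvalues; this is precisely where the generating-function argument of the earlier invariant-ring lemma, together with the \(x\leftrightarrow z\) symmetry, does the work.
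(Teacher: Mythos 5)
Your treatment of items~1 and~2 is correct and is essentially the argument the paper compresses into its one--line citation of Lemma \ref{common}(\ref{c}),(\ref{d}), Lemma \ref{iso} and the structure constants: the identifications \(\ad_x=\N\), \(\ad_y=\tfrac12{\rm H}\), \(\ad_z=-\M\), the monomial invariant \(2a+b-1\) (constant equal to \(l+2k\) on \({\rm Terms}(\mathfrak b^l_{i,k})\) and shifted by \(2-p\) under \(\ad_{x_p}\)), and the sandwich between the term-by-term inclusion and the nonvanishing of the scalars in \(\{x_p,\mathfrak b^l_{i,k}\}=c\,\mathfrak b^{l+2-p}_{i,k}\) are exactly what is needed, and you correctly isolate the cancellation issue that the sandwich resolves. (The signs of those scalars as you quote them from \eqref{Bpoisson} do not all match a direct computation from \eqref{pb}, but only their vanishing loci matter, and those are the three excluded pairs.)

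Item~3 is where the proposal fails as a proof of the stated claim. Your own computation shows \(\{x,\Delta\}=\{y,\Delta\}=\{z,\Delta\}=0\), and you then (correctly) identify \(\ker\ad_z\) with \(\ker\M=\mathbb{R}[[z,\Delta]]\) via the paper's invariant-ring lemma, and \(\ker\ad_x\) with \(\ker\N=\mathbb{R}[[x,\Delta]]\) via the \(x\leftrightarrow z\) involution. But the theorem asserts \(\ker\ad_x=\mathbb{R}[[x]]\) and \(\ker\ad_z=\mathbb{R}[[z]]\), and \(\Delta=xz-y^2\) lies in neither \(\mathbb{R}[[x]]\) nor \(\mathbb{R}[[z]]\). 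So the argument you give does not establish the displayed equalities --- it establishes strictly larger kernels and thereby contradicts the statement, yet the proposal presents this as a proof and even remarks, without noticing the tension, that \(\Delta\) ``belongs to all three kernels.'' You must either exhibit a restricted sense of \(\ker\ad_{x_p}\) under which the stated equalities hold (none is supplied in the proposal, and the paper's proof, which merely cites ``the structure constants and Lemma \ref{iso},'' gives no help here), or record that item~3 as written is false for \(p=1,3\) and should read \(\mathbb{R}[[x,\Delta]]\) and \(\mathbb{R}[[z,\Delta]]\). As it stands, the proof of item~3 is not a proof of the statement in question.
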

\bpr
The first item follows items \ref{c} and \ref{d} in Theorem \ref{common}, and Lemma \ref{iso}. The second and third claim follows from
the structure constants and Lemma \ref{iso}.
\epr

An alternative representation for vector fields in \(\B\) are based on the vector potential. Each solenoidal vector field \(v\) has always a vector potential that is unique modulo
gradient vector fields. Vector potential frequently appears in the classical and quantum mechanics,
\eg see \cite{mackay1994transport}. Vector potential is called magnetic vector potential in electrodynamics while the curl of the magnetic vector potential
is called magnetic field; see \cite{barbarosie2011representation}.

\begin{thm}\label{vp0}
There exists a vector potential \(\phi^l_{i,k}\) such that \({\rm B}^l_{i,k}={\bf {curl}}(\phi^l_{i,k}),\)
where
\ba\label{vpf1}
&{\phi}^{l}_{i,k}:= {\mathfrak{b}^l_{i,k}}\nabla\Delta=\frac{{\Delta}^{k}\N^{l+1}\mb{{z}^{i+1}}}{{\kappa_{l+1, 2i+2}}}\big({z},-2{y},{x}\big).&
\ea
\end{thm}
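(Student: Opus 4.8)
The plan is to compute $\nabla\times\phi^l_{i,k}$ directly and recognise the result as the Clebsch representation already established in Theorem \ref{int}. Since $\phi^l_{i,k}$ is by definition the scalar multiple $\mathfrak{b}^l_{i,k}\,\nabla\Delta$ of a gradient, I would first apply the product rule for the curl together with the identity $\nabla\times\nabla\Delta=\mathbf 0$ (curl of a gradient vanishes), giving
\[
\nabla\times\phi^l_{i,k}=\nabla\times\big(\mathfrak{b}^l_{i,k}\,\nabla\Delta\big)
=\nabla\mathfrak{b}^l_{i,k}\times\nabla\Delta+\mathfrak{b}^l_{i,k}\,(\nabla\times\nabla\Delta)
=\nabla\mathfrak{b}^l_{i,k}\times\nabla\Delta .
\]
This reduces the entire statement to identifying $\nabla\mathfrak{b}^l_{i,k}\times\nabla\Delta$ with $\mathrm{B}^l_{i,k}$.

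Next I would exploit $\mathfrak{b}^l_{i,k}=\Delta^k\mathfrak{b}^l_{i,0}$, which holds because $\{x,\Delta\}=0$ (equivalently, it is read off from the explicit expansions in Corollary \ref{poisson}, where $\Delta^k$ appears as a common factor). Expanding the gradient by the Leibniz rule yields $\nabla\mathfrak{b}^l_{i,k}=\Delta^k\nabla\mathfrak{b}^l_{i,0}+k\Delta^{k-1}\mathfrak{b}^l_{i,0}\,\nabla\Delta$, and crossing with $\nabla\Delta$ annihilates the second summand since $\nabla\Delta\times\nabla\Delta=\mathbf 0$. Hence
\[
\nabla\times\phi^l_{i,k}=\Delta^k\big(\nabla\mathfrak{b}^l_{i,0}\times\nabla\Delta\big),
\]
which is precisely the Clebsch representation \eqref{Clebsch} of $\mathrm{B}^l_{i,k}$ proved in Theorem \ref{int}. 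This establishes $\mathrm{B}^l_{i,k}=\nabla\times\phi^l_{i,k}$, i.e.\ that $\phi^l_{i,k}$ is a vector potential for $\mathrm{B}^l_{i,k}$.

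Finally, to confirm the explicit polynomial form in \eqref{vpf1}, I would substitute the expression $\mathfrak{b}^l_{i,0}=-\N^{l+1}\mb{z^{i+1}}/\big((i+1)\kappa_{l+1,2i+2}\big)$ obtained in the proof of Theorem \ref{int}, together with $\nabla\Delta=(z,-2y,x)$, into $\phi^l_{i,k}=\Delta^k\mathfrak{b}^l_{i,0}\,\nabla\Delta$ and read off the three components. I expect essentially no analytic obstacle here: the genuinely substantive computation, namely the scalar identity \eqref{SC} underpinning the Clebsch factorisation, has already been discharged in Theorem \ref{int}, so the present argument is purely the two-line reduction above. The only point deserving care is bookkeeping of the normalising constant $\kappa_{l+1,2i+2}$ (and the factor $i+1$) when matching the defining formula $\mathfrak{b}^l_{i,k}\,\nabla\Delta$ with the stated closed form.
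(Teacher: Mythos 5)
Your proof is correct and follows essentially the same route as the paper: the paper likewise obtains the result in one line from the Clebsch representation \eqref{Clebsch} via the identity $\nabla f\times\nabla g=\nabla\times(f\nabla g)$, which is exactly your product-rule-plus-curl-of-gradient computation (including the observation that the $\Delta^k$ factor passes through because $\nabla\Delta\times\nabla\Delta=\mathbf 0$). Your closing caveat about bookkeeping is apt: as printed, the closed form on the right of \eqref{vpf1} omits the factor $-1/(i+1)$ coming from the definition \eqref{pb} of $\mathfrak{b}^l_{i,k}$, so it is the expression $\mathfrak{b}^l_{i,k}\nabla\Delta$ that is the literal curl-preimage of ${\rm B}^l_{i,k}$.
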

\bpr
From equation \eqref{Clebsch} and the equality \(\nabla{f}\times \nabla {g}=\nabla \times f \nabla{g}\) for all scalar functions \(f\) and \(g,\) we have
%(\eg see \cite[equation 5]{longcope2005topological}),
\bas
{\rm B}^l_{i,k}= \nabla\times( {\mathfrak{b}^l_{i,k}}\nabla\Delta).
\eas
Thereby, \({\mathfrak{b}^l_{i,k}}\nabla\Delta\) is a vector potential for \({\rm B}^l_{i,k}.\)
\epr

\begin{rem}\label{vp}
An alternative vector potential for solenoidal vector fields is available through the computational approach on \cite[page 21]{mejlbroreal}.
Indeed, there exists a vector potential \(\Phi^l_{i,k}\) such that \({\rm B}^l_{i,k}={\bf {curl}}(\Phi^l_{i,k}),\) where
\ba\label{vpf}
&{\Phi}^{l}_{i,k}=\Big(\frac{(l-i){z}{\rm N}^{l+1}\mb{{z}^{i+1}}-(l+1){y}{\rm N}^{l}\mb{{z}^{i+1}}}{(i+1)(2k+i+3)\Delta^{-k}},
\frac{(2i+1-l){z}{\rm N}^{l+2}\mb{{z}^{i+1}}+(l+1){x}{\rm N}^{l}\mb{{z}^{i+1}}}{(i+1)(2k+i+3)\Delta^{-k}}, \frac{(l-2i-1)y{\rm N}^{l+2}\mb{{z}^{i+1}}
+(i-l)x{\rm N}^{l+2}\mb{{z}^{i+1}}}{(i+1)(2k+i+3)\Delta^{-k}}\Big).&
\ea In particular, \(\Phi^{1}_{0,0}=(-\Delta,0,0).\) The proof here follows \cite[page 21]{mejlbroreal}. Indeed, define
\bas
P(X):=\int_{0}^1t\,{{\rm B}^{l}_{i,k}}(tX)\,dt= \frac{1}{(2k+i+3)}{\rm B}^{l}_{i,k},
\eas
where \(X:= (x, y, z).\) Then, a vector potential for \({\rm B}^l_{i,k}\) is given by \(P(X)\times X.\) Hence, equation \eqref{vpf} is computed through
equation \eqref{Blik}.

We further recall that a vector potential for a given solenoidal vector field is generally unique modulo gradient vector fields. For instance
by equations \eqref{vpf1} and \eqref{vpf}, vector fields
\bas
&\phi^1_{1,0}=\left(\frac{{{y}}^{2}{z}}{4}, -\frac{{x}y{z}}{2}, \frac{{x}{{y}}^{2}}{4}\right) \; \hbox{ and } \;
\Phi^1_{1,0}=\frac{\left( {x}{z}+2{{y}}^{2}\right)\left(\frac{1}{2}{z}, -{y}, \frac{1}{2}{x}\right)}{3}&
\eas are two different vector potentials for \({\rm B}^1_{1,0}.\) Here, \(\phi^1_{1,0}+\nabla f=\Phi^1_{1,0}\) where
\(f=\frac{1}{12}{x}{{y}}^2{z}-\frac{1}{6} {{y}}^4+\frac{1}{12}{x}^2{{z}}^2.\)
\end{rem}

Nonlinear vector fields from \(\B\) are rotational vector fields; \ie they have a nonzero curl.

\begin{thm} \label{Curl}
All vector fields from \(\B\) have a non-zero curl. In particular, the null space of curl operator on the formal sum of vector field types \eqref{bm} is given by
\(\mathbb{R}\rm B^0_{0,0}.\)
\end{thm}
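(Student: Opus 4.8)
The plan is to establish the stronger second assertion first, as the first follows from it. Write $\widehat{\B}:=\Span\{{\rm B}^l_{i,k}\mid i,k\in\NZ,\ -1\le l\le 2i+1\}$ for the space of formal sums of the generators \eqref{bm}; I must show $\ker(\nabla\times)\cap\widehat{\B}=\mathbb{R}\,{\rm B}^0_{0,0}$. One inclusion is immediate: by \eqref{Blik} a direct computation gives ${\rm B}^0_{0,0}=x\partial_x-z\partial_z=\nabla\!\big(\tfrac12(x^2-z^2)\big)$, a gradient, so $\nabla\times{\rm B}^0_{0,0}=0$. For the reverse inclusion, take $v\in\widehat{\B}$ with $\nabla\times v=0$. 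Each generator is solenoidal (Theorem \ref{Solenoid}) and annihilates $\Delta$ (Theorem \ref{int} and the Clebsch representation \eqref{Clebsch}), so $\nabla\cdot v=0$ and $v(\Delta)=\nabla\Delta\cdot v=0$. Since $\nabla\times v=0$ on $\mathbb{R}^3$, the formal Poincar\'e lemma produces a formal power series $f$ with $v=\nabla f$; the divergence-free condition becomes $\nabla^2 f=0$ (the Laplacian), while $v(\Delta)=0$ becomes $\mathcal{E}f=0$, where $\mathcal{E}:=\nabla\Delta\cdot\nabla=z\,\partial_x-2y\,\partial_y+x\,\partial_z$.

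Next I would exploit the grading \eqref{grade}. Because $\nabla\times$ lowers the standard degree by one it respects the $\delta$-grading, so $\nabla\times v=0$ forces $\nabla\times v_N=0$ on every homogeneous component; writing $v_N=\nabla f_N$ with $f_N$ homogeneous of degree $N+2$, each $f_N$ is harmonic and lies in $\ker\mathcal{E}$. It therefore suffices to classify the homogeneous harmonic elements of $\ker\mathcal{E}$. I diagonalize $\mathcal{E}$ by the linear change of variables $u:=x+z$, $w:=x-z$, under which $\mathcal{E}u=u$, $\mathcal{E}w=-w$, $\mathcal{E}y=-2y$, hence $\mathcal{E}(u^aw^by^c)=(a-b-2c)u^aw^by^c$. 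Thus $\ker\mathcal{E}=\mathbb{R}[P,Q]$ with $P:=uw=x^2-z^2$ and $Q:=u^2y=(x+z)^2y$, while the Laplacian becomes $\nabla^2=2\partial_u^2+2\partial_w^2+\partial_y^2$. A homogeneous $f\in\ker\mathcal{E}$ of degree $d$ is then a combination $f=\sum_c a_c\,u^{(d+c)/2}w^{(d-3c)/2}y^{c}$ over the admissible $c$ (those with $c\equiv d\!\pmod 2$ and $0\le 3c\le d$).

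The crux is the following computation. For the monomial $m_c=u^{p_c}w^{q_c}y^{c}$ with $p_c=(d+c)/2$ and $q_c=(d-3c)/2$,
\[
\nabla^2 m_c=2p_c(p_c-1)\,u^{p_c-2}w^{q_c}y^{c}+2q_c(q_c-1)\,u^{p_c}w^{q_c-2}y^{c}+c(c-1)\,u^{p_c}w^{q_c}y^{c-2}.
\]
The key structural point — the step I expect to be the main obstacle to state cleanly — is that across all $m_c$ these image monomials are pairwise distinct whenever their coefficients are nonzero: one compares the $y$-exponents $c,c,c-2$, which forces any coincidence into the case $|c-c'|=2$, and that case is then excluded by the $u$-exponent (for $c'=c+2$ the shared-$y$ monomial has $u$-exponent $p_c+1$, matching neither $p_c-2$ nor $p_c$). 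Consequently $\nabla^2 f=0$ decouples into the scalar conditions $a_c\,p_c(p_c-1)=a_c\,q_c(q_c-1)=a_c\,c(c-1)=0$ for each admissible $c$. A nonzero $a_c$ would require $c\in\{0,1\}$, $q_c\in\{0,1\}$ and $p_c\in\{0,1\}$ simultaneously, which is impossible once $d\ge3$; for $d=2$ the only solution is $c=0$, $p_0=q_0=1$, giving $f\in\mathbb{R}\,P$.

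Finally I assemble the pieces. For $N\ge1$ we have $d=N+2\ge3$, so $f_N=0$ and $v_N=0$; for $N=0$ we get $f_0\in\mathbb{R}\,P$, whence $v_0=\nabla f_0\in\mathbb{R}\,\nabla(x^2-z^2)=\mathbb{R}\,{\rm B}^0_{0,0}$. Therefore $v\in\mathbb{R}\,{\rm B}^0_{0,0}$, proving $\ker(\nabla\times)\cap\widehat{\B}=\mathbb{R}\,{\rm B}^0_{0,0}$. The first assertion now follows: since $\B\subseteq\widehat{\B}$, any curl-free $v\in\B$ lies in $\mathbb{R}\,{\rm B}^0_{0,0}$; but every element of $\B$ has grade-zero part a scalar multiple of ${\rm B}^1_{0,0}=-\N$, and ${\rm B}^0_{0,0},{\rm B}^1_{0,0}$ are linearly independent, so $\B\cap\mathbb{R}\,{\rm B}^0_{0,0}=\{0\}$ and every nonzero $v\in\B$ has nonzero curl. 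The only substantive difficulty is the monomial-disjointness computation of the previous paragraph, i.e. the injectivity of $\nabla^2$ on $\mathbb{R}[P,Q]_d$ for $d\ge3$; the remainder is bookkeeping with the grading together with the Poincar\'e lemma and the solenoidal and $\Delta$-annihilating properties already proved.
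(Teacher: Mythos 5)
Your proof is correct, but it takes a genuinely different route from the paper's. The paper reduces to individual generators via the monomial--partition machinery of Lemma \ref{common} and the expansion \eqref{vterm}, then computes the first component of \(\nabla\times{\rm B}^l_{i,k}\) explicitly through Lemma \ref{A.4} and reads off \(i=k=l=0\) from the vanishing of coefficients attached to pairwise disjoint families of monomials. You instead use only the previously established facts that every generator is solenoidal and annihilates \(\Delta\) (Theorems \ref{Solenoid} and \ref{int}, which precede Theorem \ref{Curl}, so there is no circularity), invoke the formal Poincar\'e lemma to write a curl--free element as \(\nabla f\), and reduce the problem to classifying homogeneous harmonic polynomials killed by the operator \(z\,\frac{\D}{\D x}-2y\,\frac{\D}{\D y}+x\,\frac{\D}{\D z}\); the diagonalization in the coordinates \(u=x+z\), \(w=x-z\), \(y\) and the disjointness of the Laplacian's image monomials then decouple the system. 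I checked the decoupling step (comparison of \(y\)-- and \(u\)--exponents across \(c\) and \(c'=c\pm 2\)) and the degree count \(p_c+q_c+c=d\), which correctly rules out \(d\geq 3\); the base case \(d=2\) correctly yields \(\mathbb{R}(x^2-z^2)\) and hence \(\mathbb{R}\,{\rm B}^0_{0,0}=\mathbb{R}\,{\rm H}\), and your closing observation that \(\B\cap\mathbb{R}\,{\rm B}^0_{0,0}=\{0\}\) is what the first assertion needs. Your route buys a strictly stronger and more conceptual statement --- any formal vector field that is simultaneously irrotational, solenoidal and tangent to the level sets of \(\Delta\) is a scalar multiple of the gradient field \(\nabla(x^2-z^2)\) --- and avoids the explicit curl formulas, whereas the paper's computation stays entirely inside its \({\rm B}\)--generator calculus.
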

\bpr Note that \({\bf curl}({\rm B}^0_{0,0})=0.\) Let \(v\in \B\) and \({\bf curl} (v)=0.\) By equation \eqref{vterm}, Lemma \ref{common}, linearity and continuity
in filtration topology of \({\bf curl}\) operator,
\bes
v=\sum_{N=0}^{\infty}\sum^{2N+1}_{j=-1}\sum_{k=0}^{\min\{\lfloor\frac{2N-j+1}{4}\rfloor, \lfloor \frac{N}{2}\rfloor\}} v^{j}_{N-2k,0}\Delta^k,
\quad\hbox{ where} \quad v^{j}_{N-2k,0}\in{\mathbb{R}}\{{\rm B}^{j}_{N-2k,0}\},
\ees  and \({\bf curl} (v^{j}_{N-2k,0}\Delta^k)=0\) for all \(j, l, N.\) Now let \({\bf curl}({\rm B}^l_{i,k})=0.\) Hence, all three components of
\({\bf curl}({\rm B}^l_{i,k})\) are zero. The first component of \({\bf curl}({\rm B}^l_{i,k})\) is given by \(-\frac{\D}{\D{{y}}}
\frac{(l+1)\Delta^k{\rm N}^{l}\mb{{{z}}^{i+1}}}{(i+1)\kappa_{l,2i+2}}-\frac{\D}{\D{{z}}}\frac{(i-l)\Delta^k{\rm N}^{l+1}
\mb{{{z}}^{i+1}}}{(i+1)\kappa_{l+1,2i+2}}\) and by Lemma \ref{A.4}, we have
\ba\label{Curl0}
&\nabla\times{\rm B}^{l}_{i,k}\cdot{{\bf e}_1}=-\frac{2l (l+1)\Delta^k{\rm N}^{l-1}\mb{{{z}}^{i}}}{\kappa_{l,2i+2}}+\frac{2{y}k(l+1)
\Delta^{k-1}{\rm N}^{l}\mb{{{z}}^{i+1}}}{(i+1)\kappa_{l,2i+2}}
-\frac{(i-l)\Delta^k{\rm N}^{l+1}\mb{{{z}}^{i}}}{\kappa_{l+1,2i+2}}-\frac{k(i-l){x}\Delta^{k-1}{\rm N}^{l+1}\mb{{{z}}^{i+1}}}{(i+1)\kappa_{l+1,2i+2}}=0.&
\ea Since \({\rm Terms}(\Delta^k{\rm N}^{l-1}\mb{{{z}}^{i}}),\) \({\rm Terms}(\Delta^{k-1}{\rm N}^{l}\mb{{{z}}^{i+1}}),\)
\({\rm Terms}(\Delta^k{\rm N}^{l+1}\mb{{{z}}^{i}}),\) and \({\rm Terms}({\rm N}^{l+1}\mb{{{z}}^{i+1}})\) are pairwise disjoint sets of monomial terms,
equation \eqref{Curl0} holds if and only if
\(l(l+1)=k(l+1)=(i-l)=k(i-l)=0.\) The later is equivalent with \(i=k=l=0.\) This completes the proof.
\epr

%\begin{thm}\label{itemproperty}
%Let \(v\in \B\). Then,
%\begin{description}
%\item[1.] The vector field \(v\) is non-gradient.
%\item[2.] \(v\) are a volume-preserving vector field.
%\item[3.] \(w\in \B\) if and only if \(\Delta\) is a first integral for \(w\) and \(w\) is solenoidal.
%\end{description}
%\end{thm}
%\bpr
%The first statement is the result of Theorem \ref{Curl}. Item \(2\) concludes from  preceding item. For item \(3,\) Theorem \ref{Solenoid} shows that these systems
%are non-dissipative and the vector fields associated with them  are solenoidal. The vector potential of  these families is presented in Theorem \ref{vp} and \ref{vp0}.
%For last item, see Theorem \ref{Bdivint}.
%\epr

\begin{exm}
Let
\bas
v:={\rm A}^{0}_{0,1}-3{\rm C}^{2}_{2,0}=-3{x}{{y}}^{2}\frac{\D}{\D {x}}-3{x}{y}
{z}\frac{\D}{\D {y}}-3{{y}}^{2}{z}\frac{\D}{\D {z}}.
\eas Then, \(v(\Delta)=0\) while \({\rm div}(v)=-3{x}{z}-6{{y}}^2\neq0.\)

Consider the vector field
\bas
{\rm A}^{-2}_{i,0}:=z^{i+1}\frac{\D}{\D{{x}}}, \quad \hbox{ for any } i\in \NZ.
\eas
This family has two first integrals of \(y\) and \(z\) while \({\rm A}^{-2}_{i,0}\) is also solenoidal for all \(i\). These vector fields do
not generate a Lie algebra with the nilpotent linear part \({\rm B}^1_{0,0},\) indeed,
\bas
&[{\rm A}^{-2}_{j,0}, {\rm A}^{-2}_{i,0}]=0, \qquad {[{\rm B}^1_{0,0}, {\rm A}^{-2}_{i,0}]}=-2(i+1) {\rm A}^{-1}_{i,0}.&
\eas This indicates that the family of vector fields in \(\B\) does not represent the set of all solenoidal vector fields with two independent first integrals.
%Now we extend our family to \(\{A^{-2}_{i,0}, A^{-1}_{i,0}\}\) and continue the Lie product  the \(B\)-terms will appear. For example
%\bas
%[A^2_{1,0},A^1_{0,0}]=\frac{3}{5}A^3_{1,0}+12B^{3}_{1,0}
%\eas
\end{exm}

\section{Normal form classification } \label{Sec5}

%By \cite[Proposition 2]{CushSandCont}, we remark that \(\mathbb{R}[[{z}, \Delta]]\) is the invariant ring for \({\rm M}\) and from
%\cite[Proposition 3]{CushSandCont} we conclude that the \(\ker{\rm ad}_ {\rm{M}}\mid \B\) is  generated by \(\left(\begin{array}{c}2{y}\\{z}\\0\end{array}\right)\).

\begin{thm}\label{UNF} The vector field \eqref{eq1}-\eqref{eq2} is either linearizable in the first level normalization step or there exists a
natural number \(p\) so that the first level normal form of the vector field \eqref{eq1}-\eqref{eq2} is given by
\ba\label{SNF}
&{\bf w}:=-{x}\frac{\D}{\D{{y}}}-2{y}\frac{\D}{\D{{z}}}+{{z}}^{p} ({z}\frac{\D}{\D{{y}}}
+2{y}\frac{\D}{\D{{x}}})+\sum_{k=p+1}^{\infty}\sum_{i=0}^{[{\frac{k+p}{2}}]} b_{i,k}{{z}}^i(xz-y^2)^{k-2i+p}({z}\frac{\D}{\D{{y}}}
+2{y}\frac{\D}{\D{{x}}}),&
\ea where \({b}_{i,k}\in \mathbb{R}\). Furthermore, the normal form vector field \eqref{SNF} is always an the infinite level normal form.
In addition, the infinite level secondary Clebsch potential normal form is given by
\ba\label{invSNF}
&I(x, y, z)={x}+\frac{1}{p+1}{{z}}^{{p}+1}+\sum_{k=p+1}^{\infty}\sum_{i=0}^{[{\frac{k+p}{2}}]}
{\frac{b_{{{i},k}}}{{i}+1}}{{z}}^{{i}+1}{( {x}{z}-{{y}}^{2} )}^{k-2i+p}.&
\ea Here, the quadratic polynomial \(\Delta=xz-y^2\) stands for the primary Clebsch potential.
\end{thm}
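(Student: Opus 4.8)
The plan is to transport the whole computation to the Poisson algebra $\mathfrak{B}$, where the $\Sl$-module bookkeeping becomes transparent, and then to read the result back through $\Psi$. First, Theorem~\ref{Bdivint} turns the hypotheses $\mathrm{div}(v)=0$ and $v(\Delta)=0$ into the statement $v\in\B$ with linear part ${\rm B}^1_{0,0}=-\N$. By Lemma~\ref{iso} the Lie isomorphism $\Psi$ carries the adjoint (time-one flow) action on $\B$ to the Poisson-bracket action on $\mathfrak{B}$, and by the continuity recorded in Theorem~\ref{thmBpoisson} this correspondence survives the filtration limit. Hence normalizing $v$ by near-identity transformations generated by $\B$ is the same as normalizing the secondary Clebsch potential $\Psi^{-1}(v)=-I\in\mathfrak{B}$ under Poisson flows. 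I would record once and for all that $\mathfrak{b}^1_{0,0}=-x$ and $\mathfrak{b}^{-1}_{i,k}=-\frac{1}{i+1}z^{i+1}\Delta^k$, so that the linear part of $I$ is $x$ and the homological operator is $\ad_x=\{x,\cdot\}$, equivalently $\ad_{{\rm B}^1_{0,0}}$ acting on $\B$ by ${\rm B}^l_{i,k}\mapsto(l-2i-1){\rm B}^{l+1}_{i,k}$ as in \eqref{S3}.

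For the first level, each graded summand $\B_N$ decomposes into the $\Sl$-strings $\{{\rm B}^l_{i,k}\}_{l=-1}^{2i+1}$, and by \eqref{S3} the operator $\ad_{{\rm B}^1_{0,0}}$ lowers the weight along each string, annihilating only the top vector ${\rm B}^{2i+1}_{i,k}$; consequently its cokernel is spanned by the highest-weight vectors ${\rm B}^{-1}_{i,k}$, which is exactly $\ker\ad_{\M}\cap\B$. Removing $\IM\ad_{{\rm B}^1_{0,0}}$ grade by grade therefore brings $v$ to $-\N+g(z,\Delta)\,\M$ with $g\in\mathbb{R}[[z,\Delta]]$ free of constant and linear parts, i.e. $I=x+h(z,\Delta)$ (using the case $l=-1$ of \eqref{bm}, namely ${\rm B}^{-1}_{i,k}=z^i\Delta^k\M$). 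If $h\equiv0$ then $v=-\N$ is linear, which is the first alternative of the statement.

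Otherwise I would fix $p$ from the lowest surviving term and normalize it to $\frac{1}{p+1}z^{p+1}$ — equivalently $z^p\M={\rm B}^{-1}_{p,0}$ with unit coefficient — using the volume-preserving scaling $\exp(s\,\ad_{{\rm H}})$, which fixes $\Delta$ and rescales $z$, together with the time rescaling that preserves $-\N$. The remaining reduction is genuinely a higher-level phenomenon: since $\{z,\Delta\}=0$, every normal-form generator $\mathfrak{b}^{-1}_{m,n}$ is a function of $z,\Delta$ and Poisson-commutes with $h$, so it cannot touch $h$; the only effective moves come from the image-type generators $\mathfrak{b}^l_{i,k}$ with $l\ge0$, whose second-order contribution is the normal-form part of $\{\,\cdot\,,z^{p+1}\}$. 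Organizing this operator by the weight $2i+k$ attached to the leading term, I would compute its image on the normal-form space and show that the potential monomials it fails to remove are exactly those recorded in \eqref{invSNF}; feeding the result back through $\Psi$ and the Clebsch identity \eqref{Clebsch} then converts \eqref{invSNF} into the vector field \eqref{SNF}.

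Finally, the infinite-level assertion amounts to showing that the surviving $b_{i,k}$ are true invariants: because the bracket with the weight-homogeneous leading term raises the $2i+k$-weight by a fixed amount and $\{z,\Delta\}=0$ forces all higher-order corrections into strictly higher weight, the second-order homological operator stabilizes and no later generator can remove a surviving coefficient. The hard part will be precisely this cohomology computation — pinning down the exact image of the second-order operator and proving the minimality of the index set in \eqref{invSNF}. As with the invariance argument in Theorem~\ref{int}, I expect the governing identities among the structure constants $C^{q_1,q_2}_{p,i,j}$ and $a^{q_1,q_2}_{j,i_1,i_2}$ to be certifiable by creative telescoping (Zeilberger's algorithm); the delicate point is running the grade filtration and the $2i+k$-weight filtration simultaneously, so that the leading term is genuinely isolated and the surviving set matches \eqref{invSNF} on the nose.
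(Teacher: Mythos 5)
Your reduction to the Poisson picture and your first-level argument are sound and essentially what the paper does: Theorem~\ref{Bdivint} places \(v\) in \(\B\), the \(\Sl\)-style splitting puts the first-level normal form into \(\ker{\rm ad}_\M\cap\B=\Span\{{\rm B}^{-1}_{i,k}\}\), i.e.\ \(I=x+h(z,\Delta)\), and a linear scaling normalizes the leading coefficient. The gap is in the infinite-level claim, which is the real content of the theorem. Observe first that \eqref{SNF}--\eqref{invSNF} is not a proper subset of the first-level normal form: setting \(j=k-2i+p\), the double sum runs over \emph{all} generators \({\rm B}^{-1}_{i,j}\) whose \(\delta\)-grade (for \(\delta({\rm B}^{l}_{i,k})=lp+2i+k\)) exceeds that of the leading term \({\rm B}^{-1}_{p,0}\). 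So what must be proved is that the higher-level homological operators have \emph{zero} image in the normal form space --- nothing further is removable --- whereas your plan (``compute its image on the normal-form space and show that the potential monomials it fails to remove are exactly those recorded in \eqref{invSNF}'') reads as though a nontrivial quotient were being taken, and you defer the decisive step to an unexecuted creative-telescoping computation on the structure constants \(C^{q_1,q_2}_{p,i,j}\). You also misidentify the generators that matter: the ones available beyond the first level are those in \(\ker{\rm ad}_{\N}\cap\B\), i.e.\ the lines \(l=2i+1\) (equivalently \(x^{i+1}\Delta^k\in\ker{\rm ad}_x\cap\mathfrak{B}\)), not the ``image-type generators with \(l\ge 0\)'' you single out; the latter are already consumed by the first-level normalization and only contribute through the forced correction term inside the composite second-level operator.

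The paper closes this gap with a short symmetry argument that your proposal never reaches. Following Baider--Sanders it forms \(\Gamma={\rm ad}({\rm B}^{-1}_{0,0})\circ{\rm ad}(\mathbb{B}_{p})\) with \(\mathbb{B}_p={\rm B}^1_{0,0}+{\rm B}^{-1}_{p,0}\); the diagonal entries \((q-2i-1)(q+2)\) together with the triangular structure confine \(\ker\Gamma\) to \(q=2i+1\), and these kernel elements are exactly \(\Psi\big((\Psi^{-1}(\mathbb{B}_{p}))^{i+1}\Delta^k\big)=\Psi\big((-x-z^{p+1})^{i+1}\Delta^k\big)\), i.e.\ powers of the leading secondary Clebsch potential times powers of \(\Delta\). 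Each deforms to the exact symmetry \(\Psi\big((\Psi^{-1}({\bf w}))^{i+1}\Delta^k\big)\) of the full normal form, simply because any function of the first integrals \(I\) and \(\Delta\) Poisson-commutes with \(\Psi^{-1}({\bf w})\). Hence every generator available beyond the first level is correctable to a genuine symmetry of \({\bf w}\) and removes nothing; no identities among the \(C^{q_1,q_2}_{p,i,j}\) and no Zeilberger-style certificates are needed. Your remark that \(\{z,\Delta\}=0\) so the \(\mathfrak{b}^{-1}\)-generators cannot touch \(h\) is a correct fragment of this picture, but the one-line identity \(\{I^{\,i+1}\Delta^k,\,I\}=0\) --- not a cohomology computation with two interleaved filtrations --- is what kills the remaining contributions, and without it your argument does not establish that \eqref{SNF} is the infinite-level normal form.
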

\bpr
The normal form
\ba\label{nf}
&{\bf w}:={\rm B}^1_{0,0}+\sum_{i,k\in{\mathbb{N}_0}}^{\infty} {b}_{i,k} {\rm B}^{-1}_{i,k},&
\ea is readily available given the \(\Sl\)-style normal form and
the fact that \(\ker({\rm ad}_{{\rm M}})=\Span\{ {\rm B}^{-1}_{i,k}\}\); for more information see \cite{baidersanders,BaidSand91}.
Let \({r}:=\min\{i\mid{b}_{i,0}\neq0\}\) and \(r>0.\) Define a new grading function by \(\delta({\rm B}^{l}_{i,k}):={l{r}+2i+k}.\) Then, \(\B\) is a \(\delta\)-graded
Lie algebra and
\bas
\mathbb{B}_{p} := {\rm B}_{0,0}^1 +{b}_{p,0}{\rm B}_{{p},0}^{-1}\in \B_p.
\eas Linear invertible transformations can be used to rescale the coefficient \(b_{p,0}\) into \(b_{p,0}:=1.\)
Following \cite{baidersanders,BaidSand91} we define
\ba\label{gamma}
\Gamma:= \ad ({\rm B}^{-1}_{0,0})\circ \ad(\mathbb{B}_{p}).
\ea By the structure constants and equation \eqref{S3},
\bas
\Gamma ({\rm B}^{q}_{i,k})= (q-2i-1)(q+2) {\rm B}^{q}_{i,k}+b_{p,0}\sum_{j=\max\{q-2-i-p, -1\}}^{s-1+{\lfloor\frac{r+1}{2}\rfloor}}\frac{{{a}}_{j,p,i}^{-1,q}
{\rm B}^{2j+|r-1|-1}_{2j-q+1+i+p+|r-1|,s-1+k-j+{\lfloor\frac{r+1}{2}\rfloor}}}{(2j+|r-1|+1)^{-1}}.
\eas Hence for any \(i\) and \(k\) (\(q=2i+1\)), there is a possibility of a vector polynomial in kernel \(\Gamma.\) This is due to a similar argument
used by \cite{baidersanders,BaidSand91}. On the other hand
\(\mathfrak{b}^{2i+1}_{i,k}=-{x}^{i+1}\Delta^k,\) \(\Psi((\mathfrak{b}^1_{0,0})^{i+1}\Delta^k)={\rm B}^{2i+1}_{i,k},\) and
\bas
\mathbb{B}_{{r}}^{i+1}\Delta^k:=\Psi\left((\Psi^{-1}(\mathbb{B}_{{r}}))^{i+1}\Delta^k\right)=\Psi((-{x}-{z}^{{r}+1})^{i+1}\Delta^k)\in \ker \Gamma.
\eas These polynomial vectors are extended to a symmetry for the normal form vector field \eqref{SNF}, through
\bas
\Psi\left((\Psi^{-1}({\bf w})^{i+1}\Delta^k\right).
\eas This proves that there is no possibility of any hypernormalization beyond the \(\Sl\)-style normalized vector field \eqref{SNF}.
\epr
\begin{cor}
The following presents five alternative representations for the  normal form \eqref{SNF}:
\begin{enumerate}
\item A formal sum of \({\rm B}\)-terms:
\bas
&{\bf w}:={\rm B}^1_{0,0}+ {\rm B}^{-1}_{p,0}+\sum_{k=p+1}^{\infty}\sum_{i=0}^{[{\frac{k+p}{2}}]}  {b}_{i,k} {\rm B}^{-1}_{i,k+p-2i}.&
\eas
\item The secondary invariant:
\bas
&{\bf w}:=\Psi\big(-{x}-{{z}}^{p}- \sum_{k=p+1}^{\infty}\sum_{i=0}^{[{\frac{k+p}{2}}]} {b_{i,k}{z}^{i+1}({x}{z}-{{y}}^2)^{k+p-2i}}\big).&
\eas Here, \(\Psi\) is the Lie isomorphism given by equation \eqref{fiso}.
\item Vector potential:
\bas
&{\bf w}:={\bf curl}\Big(({x}+ {{z}}^{p+1}+\sum_{k=p+1}^{\infty}\sum_{i=0}^{[{\frac{k+p}{2}}]}\frac{\Delta^k {{z}}^{i+1}}{i+1})[{z}, -2{y}, {x}]\Big).&
\eas
\item Functionally independent Clebsch potentials:
\bas
&{\bf w }:=\nabla({x}{z}-{{y}}^2)\times\nabla\Big({x}+ {{z}}^{p+1}+ \sum_{k=p+1}^{\infty}
\sum_{i=0}^{[{\frac{k+p}{2}}]} \frac{b_{{{i},k}}{{z}}^{{i}+1}{( {x}{z}-{{y}}^{2} )}^{k+p-2i}}{{i}+1}\Big).&
\eas
\item Poisson bracket:
\bas
&{\bf w }=\sum_{p=1}^3\{x_p, {\bf I}(x, y, z)\}\cdot{{\bf e}_p}&
\eas where \({\bf I}(x, y, z)\) is the invariant given in equation \eqref{invSNF}.
Furthermore, \(\{{\bf I}(x, y, z),\Delta\}=0.\)
\end{enumerate}
\end{cor}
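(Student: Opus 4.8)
The plan is to recognize that all five forms are different ``wrappers'' around a single scalar object---the secondary Clebsch potential attached to \(\mathbf{w}\) by the Lie isomorphism \(\Psi\) of Lemma \ref{iso}---so that, once the single-term identities of the earlier theorems are in hand, each representation is produced by applying one \emph{linear, filtration-continuous} operation to the \({\rm B}\)-term expansion and commuting it past the formal infinite sum. First I would establish form (1). The relation \(\N(\Delta)=0\) gives \({\rm B}^{l}_{i,k}=\Delta^{k}{\rm B}^{l}_{i,0}\), and evaluating the definition \eqref{bm} (equivalently \eqref{Blik}) at the relevant indices yields the closed forms \({\rm B}^{1}_{0,0}=-\N\) and \({\rm B}^{-1}_{i,k}=z^{i}\Delta^{k}\bigl(z\frac{\D}{\D y}+2y\frac{\D}{\D x}\bigr)\). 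Substituting these into \eqref{SNF} and reading off exponents---so that \(z^{p}\bigl(z\frac{\D}{\D y}+2y\frac{\D}{\D x}\bigr)={\rm B}^{-1}_{p,0}\) and \(b_{i,k}z^{i}\Delta^{k-2i+p}\bigl(z\frac{\D}{\D y}+2y\frac{\D}{\D x}\bigr)=b_{i,k}{\rm B}^{-1}_{i,\,k+p-2i}\), with the range \(0\le i\le[\tfrac{k+p}{2}]\) exactly guaranteeing a nonnegative \(\Delta\)-power---reindexes the double sum of \eqref{SNF} into form (1). This step is pure bookkeeping organized by the grading \eqref{grade}.

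Next, form (2) is obtained by pushing form (1) through \(\Psi^{-1}\). By Lemma \ref{iso} we have \(\Psi^{-1}({\rm B}^{l}_{i,k})=\mathfrak{b}^{l}_{i,k}\), and \eqref{pb} (equivalently Corollary \ref{poisson}) supplies the polynomial values \(\mathfrak{b}^{1}_{0,0}=-x\) and \(\mathfrak{b}^{-1}_{i,k}=-\tfrac{1}{i+1}z^{i+1}\Delta^{k}\). Summing term by term and using the continuity of \(\Psi^{-1}\) in the filtration topology gives \(\Psi^{-1}(\mathbf{w})=-I\), where \(I\) is the secondary potential \eqref{invSNF}; applying \(\Psi\) then returns form (2).

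The three remaining forms follow from the single-term theorems together with linearity and the identity \(\mathfrak{b}^{l}_{i,k}=\Delta^{k}\mathfrak{b}^{l}_{i,0}\), itself a consequence of \(\{x,\Delta\}=0\). For the vector potential (3), Theorem \ref{vp0} gives \({\rm B}^{l}_{i,k}={\bf curl}\bigl(\mathfrak{b}^{l}_{i,k}\nabla\Delta\bigr)\), so by linearity and continuity of curl one has \(\mathbf{w}={\bf curl}\bigl(\Psi^{-1}(\mathbf{w})\,\nabla\Delta\bigr)\) with \(\nabla\Delta=(z,-2y,x)\). For the Clebsch form (4), equation \eqref{Clebsch} together with \(\nabla f\times\nabla g=\nabla\times(f\nabla g)\) and \(\nabla\Delta\times\nabla\Delta=0\) upgrades to the per-term identity \({\rm B}^{l}_{i,k}=\nabla\mathfrak{b}^{l}_{i,k}\times\nabla\Delta\); summing gives \(\mathbf{w}=\nabla\Delta\times\nabla I\). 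For the Poisson form (5), equation \eqref{Bpoisson} gives \({\rm B}^{l}_{i,k}=\sum_{j}\{x_{j},\mathfrak{b}^{l}_{i,k}\}\,{\bf e}_{j}\), hence \(\mathbf{w}=\sum_{j}\{x_{j},\Psi^{-1}(\mathbf{w})\}\,{\bf e}_{j}\); finally \(\{I,\Delta\}=0\) follows because \(\Delta\) is a first integral of \(\mathbf{w}\) by Theorem \ref{int}, which via \eqref{PoissonForm} reads \(\{\Delta,\Psi^{-1}(\mathbf{w})\}=\mathbf{w}(\Delta)=0\).

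The main obstacle is justifying the interchange of each linear operation with the infinite formal sum, and I would handle this uniformly through the grading \(\delta\) of \eqref{grade}: on each finite-grade truncation the single-term identities hold verbatim, and the operators \(\Psi^{\pm1}\), \({\bf curl}\), \((\,\cdot\,)\times\nabla\Delta\), and \(\{x_{j},\,\cdot\,\}\) are all graded and continuous in the filtration topology, so passing to the limit closes the argument. The only other point requiring care is the consistent sign-and-index bookkeeping that identifies \(-\Psi^{-1}(\mathbf{w})\) with the secondary potential \(I\) of \eqref{invSNF}, which is tracked through the explicit values of \(\mathfrak{b}^{1}_{0,0}\) and \(\mathfrak{b}^{-1}_{i,k}\) recorded above.
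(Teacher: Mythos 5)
Your proposal is correct and follows essentially the same route as the paper, whose proof is just the citation list ``equation \eqref{Blik}, Lemma \ref{iso}, item 2 in Theorem \ref{int}, and Theorem \ref{thmBpoisson}, respectively'': you expand the same single-term identities (\({\rm B}^{-1}_{i,k}=z^i\Delta^k\M\), \(\mathfrak{b}^1_{0,0}=-x\), \(\mathfrak{b}^{-1}_{i,k}=-\tfrac{1}{i+1}z^{i+1}\Delta^k\), equations \eqref{Clebsch}, \eqref{vpf1} and \eqref{Bpoisson}) and commute the linear operators \(\Psi^{\pm1}\), \(\mathbf{curl}\), \(\times\nabla\Delta\) and \(\{x_j,\cdot\}\) past the graded formal sum. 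Your explicit sign-and-coefficient bookkeeping, identifying \(\Psi^{-1}(\mathbf{w})=-I\) with \(I\) from \eqref{invSNF}, is in fact more careful than the printed statement of the corollary, which drops some factors \(\tfrac{1}{i+1}\), \(\tfrac{1}{p+1}\) and the \(b_{i,k}\) inconsistently across items (2) and (3).
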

\bpr Follow equation \eqref{Blik}, Lemma \eqref{iso}, item \(2\) in Theorem \eqref{int}, and Theorem \ref{thmBpoisson}, respectively.
\epr

Now we consider further reduction of a normal form system given by
\ba\label{SNF2}
&{\bf w}:=-{x}\frac{\D}{\D{{y}}}-2{y}\frac{\D}{\D{{z}}}+{{z}}^{p} ({z}\frac{\D}{\D{{y}}}
+2{y}\frac{\D}{\D{{x}}})+\sum_{i=p+1}^{\infty} b_{i}{{z}}^i({z}\frac{\D}{\D{{y}}}
+2{y}\frac{\D}{\D{{x}}}).&
\ea

\begin{thm}\label{Hamiltonian}
There exists a near identity transformation so that the vector field \eqref{SNF2} is transformed into the normal form vector field
\ba\label{bif2}
&v:=-2Y\frac{\D}{\D{Z}}+\left(-X+\frac{p+2}{p+1}Z^{p+1}+\sum_{i=p+1}^{\infty}\frac{b_{i}(i+2)}{i+1}Z^{i+1}\right)
\frac{\D}{\D{Y}}.&
\ea Furthermore, \(X(t):=c\) is always constant. Hence, the normal form system \eqref{bif2} has a Hamiltonian
\ba\label{ham}
&H(Z, Y):= -\Delta(X, Y, Z) =Y^2-cZ+\frac{1}{p+1}Z^{p+2}+\sum_{i=p+1}^{\infty}\frac{b_{i}}{i+1}Z^{i+2}.&
\ea On the invariant manifold \(I(x, y, z)=0,\) the normal form vector field takes a further hypernormalization given by
\bas
&v^{(\infty)}=-2 {y} \frac{\D}{\D{{z}}}+\sum_{i=p}^{\infty}a_i{{z}}^{{i}+1}\frac{\D}{\D{{y}}},&
\eas where \(a_i=0\) for \( i=(p+1)(m+1),\) \(m\in \mathbb{N}_0.\)
\end{thm}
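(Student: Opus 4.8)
The plan is to build the transformation of the first assertion directly from the secondary Clebsch potential, then read off the Hamiltonian, and finally carry out the planar hypernormalization on the invariant surface. Write $g(z):=z^p+\sum_{i\ge p+1}b_iz^i$, so that \eqref{SNF2} reads $\dot x=2yg(z)$, $\dot y=-x+zg(z)$, $\dot z=-2y$, and recall from \eqref{invSNF} that $I(x,y,z)=x+\frac1{p+1}z^{p+1}+\sum_{i\ge p+1}\frac{b_i}{i+1}z^{i+1}$, whose $z$-derivative is $g(z)$. First I would introduce the change of variables $X:=I(x,y,z)$, $Y:=y$, $Z:=z$; this is near-identity since $I=x+O(\text{order}\ge 2)$. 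Because $\partial_xI=1$, $\partial_yI=0$, $\partial_zI=g(z)$, the chain rule gives $\dot X=\mathbf w(I)=2yg(z)-2yg(z)=0$, which is exactly the first-integral property of $I$ from Theorem \ref{int}. Substituting $x=X-\frac1{p+1}Z^{p+1}-\sum\frac{b_i}{i+1}Z^{i+1}$ into $\dot Y=\dot y=-x+zg(z)$ produces the coefficients $\frac1{p+1}+1=\frac{p+2}{p+1}$ in front of $Z^{p+1}$ and $\frac{b_i}{i+1}+b_i=\frac{b_i(i+2)}{i+1}$ in front of $Z^{i+1}$, i.e. precisely \eqref{bif2}, while $\dot Z=\dot z=-2Y$ is unchanged.

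For the Hamiltonian assertion, $\dot X=0$ forces $X(t)\equiv c$. On the slice $\{X=c\}$ one has $x=c-\frac1{p+1}Z^{p+1}-\sum\frac{b_i}{i+1}Z^{i+1}$, so substituting into $-\Delta=Y^2-xZ$ yields exactly the function $H$ of \eqref{ham}; since $\Delta$ is a first integral of $\mathbf w$ (Theorem \ref{int}), $H$ is conserved, and a direct check gives $\dot Z=-2Y=-\partial_Y H$ and $\dot Y=\partial_Z H$, so \eqref{bif2} is Hamiltonian with Hamiltonian $H$ on each slice $X=c$.

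The substantial part is the reduction on $I=0$ (i.e. $c=0$), where \eqref{bif2} restricts to the planar conservative system $\dot z=-2y$, $\dot y=f(z)$ with $f(z)=\frac{p+2}{p+1}z^{p+1}+\sum_{i\ge p+1}\frac{b_i(i+2)}{i+1}z^{i+1}$ and first integral $H=y^2+F(z)$, $F'=f$. Writing the field as $X_0+\sum_{i>p}a_iz^{i+1}\partial_y$ with $X_0:=-2y\partial_z+\frac{p+2}{p+1}z^{p+1}\partial_y$, I would run a second-level (hyper)normalization in the Baider--Sanders spirit already used for Theorem \ref{UNF}: grade the admissible perturbations $z^{i+1}\partial_y$ by the index $i$, and at each level quotient by the image of the homological operator determined by $X_0$ together with the $\mathscr B$-admissible (solenoidal, $\Delta$-preserving) near-identity changes restricted to $\{I=0\}$. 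Because the leading nonlinear monomial of $X_0$ has degree $p+1$, the grading attached to it makes the relevant cokernel periodic modulo $p+1$, coupling the levels that differ by multiples of $p+1$. The outcome I aim to prove is that the one-dimensional residual quotient carrying $z^{i+1}\partial_y$ is annihilated exactly along the progression $i=(p+1)(m+1)$ and is otherwise nonzero; this forces $a_i=0$ for $i=(p+1)(m+1)$ and leaves the remaining coefficients as the hypernormal form.

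The crux, and the step I expect to be hardest, is this resonance computation. Since $X_0$ is quasi-homogeneous but not semisimple, the reduction must be organized through the $\mathfrak{sl}_2$-triple attached to $X_0$, in the style of \cite{CushmanSanders,SandBook} and of the hypernormalization in Theorem \ref{UNF}, and one must evaluate the cokernel of the combined operator level by level. The two delicate points are (i) verifying that the generator sitting at degree $(p+1)(m+1)$ acts nontrivially on the surviving pure monomial $z^{i+1}\partial_y$ precisely when $i=(p+1)(m+1)$, and (ii) checking that invoking it does not reintroduce already-normalized lower-degree terms, so that the recursion closes and reproduces exactly the stated vanishing pattern.
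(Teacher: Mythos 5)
Your treatment of the first two assertions is correct and is exactly the paper's route: the change of variables $(X,Y,Z):=(I(x,y,z),y,z)$ built from the secondary Clebsch potential, the observation that $\dot X=\mathbf w(I)=0$ forces $X\equiv c$, the back-substitution producing the coefficients $\tfrac{p+2}{p+1}$ and $\tfrac{b_i(i+2)}{i+1}$, and the identification $H=-\Delta$ restricted to the slice $X=c$ as the Hamiltonian. Your explicit verification that $\dot Z=-\partial_YH$ and $\dot Y=\partial_ZH$ is a welcome addition that the paper omits.

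The gap is in the third assertion. You correctly set up the planar conservative system on $\{I=0\}$ and correctly identify the statement to be proved --- that the residual obstruction in the second-level normalization vanishes exactly along the arithmetic progression $i=(p+1)(m+1)$ --- but you then only announce this as ``the outcome I aim to prove'' and explicitly defer the resonance computation as the step you ``expect to be hardest.'' That computation is the entire content of the claim; without it the vanishing pattern $a_i=0$ for $i=(p+1)(m+1)$ is unestablished. The paper does not carry out this computation either: it rewrites the reduced planar field in the Baider--Sanders notation as $2A^1_0-cA^{-1}_{-1}+\tfrac{p+2}{p+1}A^{-1}_p+\sum_{i>p}\tfrac{(i+2)b_i}{i+1}A^{-1}_i$ and invokes Theorem 8.9 of Baider--Sanders (\emph{Unique normal forms: the nilpotent Hamiltonian case}), which classifies exactly these planar nilpotent conservative singularities and delivers the stated removal of the terms at degrees $i=(p+1)(m+1)$. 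So either you must import that known classification theorem as the paper does, or you must actually execute the cokernel computation you sketch (including your point (ii) about not reintroducing lower-order terms, which is precisely where such arguments typically fail); as written, the proposal proves the first two claims but not the third.
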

\bpr The key idea is to use the secondary Clebsch potential \eqref{invSNF} as a near-identity transformation, \ie
\ba\label{trans}
&(X, Y, Z):= I(x, y, z) =\left(x+\frac {1}{p+1}{{z}}^{{p}+1}+\sum_{i=p+1}^{\infty}
\frac{b_{i}}{i+1}z^{i+1}, y, z\right).&
\ea Hence, \(X(t)\) is constant. Then, the normal form vector field is given by
\bas
{\bf h}:=2A^{1}_0-cA^{-1}_{-1}+ \frac {(p+2)}{p+1}A^{-1}_{p}+\sum_{i=p+1}^{\infty}\frac {(i+2)b_{{{i}}}}{i+1} A^{-1}_{i},
\eas in terms of notations used in \cite{BaidSand91,GazorMoazeni}. Hence, the second claim follows \cite[Theorem 8.9]{BaidSand91}.
\epr

\subsection{Truncated normal form coefficients }\label{Symbolic}

Consider a cubic-degree truncated triple zero vector field
\ba\label{sys3}
&v:= -{x}\frac{\D}{\D{{y}}}-2{y}\frac{\D}{\D{{z}}}+ \sum^3_{i+j+k=2} {{x}}^i {{y}}^j {{z}}^k\big(  a_{ijk}\frac{\D}{\D{{x}}}+ b_{ijk} \frac{\D}{\D{{y}}}
+ c_{ijk}\frac{\D}{\D{{z}}}\big)\in \B,&
\ea where \(a_{ijk},b_{ijk}\) and \(c_{ijk}\in{\mathbb{R}}.\) By Theorem \ref{Bdivint}, \({\rm div} (v)=0\) and \(v(\Delta)=0.\) The first one implies
\ba\nonumber
&c_{002}= -\frac{1}{2}(a_{101}+b_{011}),\quad a_{200}=-b_{110},  \quad c_{020}= 2b_{110}, \quad
c_{101}= -(2 a_{200}+b_{110}),  \quad a_{101}=b_{011},&
\\\label{C2}
&a_{020}=2b_{011}, \quad c_{011}= -a_{110},  \quad\quad a_{011}=2b_{002},  \qquad\quad  c_{110}=2b_{200}, \quad c_{200}= a_{002}=b_{101}=0,&
\ea while \(v(\Delta)=0\) concludes \(c_{300} =a_{003} = 0\) and
\ba\label{C3}
\nonumber
&c_{111}= -2(a_{210}+b_{120}), \;  b_{012} = -2c_{003}, \; b_{111}=-2(a_{201}+c_{102}), \; a_{012} = 2b_{003}, \;  c_{210} =2b_{300}, \;a_{120}=-c_{021},&
\\\nonumber
&a_{111} = -2b_{021}-2c_{012}, \; b_{210} = -2a_{300}, \; a_{102}=-b_{012}-3c_{003}, \;c_{201}=-3a_{300}-b_{210}, \; a_{030} = 2b_{021}, &
\\\nonumber
&c_{030} =2b_{120}, c_{012}=-2(b_{021}+b_{102}), a_{201} = -c_{102},  a_{210}=-2(b_{201}+b_{120}),  a_{021} = -4c_{003},  c_{120}=-4a_{300}.&
\ea Hence, the vector field \eqref{sys3} can be written as
\ba\label{sys03}
&v={\rm B}^1_{{0,0}}+{d}^{-1}_{1,0}{\rm B}^{-1}_{{1,0}}+{d}^{0}_{1,0}{\rm B}^0_{{1,0}}+
{d}^{1}_{1,0}
{\rm B}^1_{{1,0}}+{d}^{2}_{1,0}{\rm B}^2_{{1,0}}+{d}^{3}_{1,0}{\rm B}^3_{{1,0}}+
{d}^{-1}_{0,1}{\rm B}^{-1}_{{0,1}}+{d}^{-1}_{2,0}{\rm B}^{-1}_{{2,0}}&
\\\nonumber
&\qquad\qquad\quad
+{d}^{1}_{0,1}{\rm B}^1_{{0,1}}+{d}^{0}_{0,1}{\rm B}^0_{{0,1}}+{d}^{1}_{2,0}{\rm B}^1_{{2,0}}+{d}^{2}_{2,0}{\rm B}^2_{{2,0}}
+{d}^{3}_{2,0}{\rm B}^3_{{2,0}}+{d}^{4}_{2,0}{\rm B}^4_{{2,0}}+{d}^{5}_{2,0}{\rm B}^5_{{2,0}}+{d}^{0}_{2,0}{\rm B}^0_{{2,0}},&
\ea where \({d}^{-1}_{1,0}=b_{002},\; {d}^{-1}_{0,1}=\frac{1}{5}(4b_{102}-b_{021}), \; {d}^{1}_{2,0}=(3b_{021}+3b_{102}),\) and
\bas
&{d}^{0}_{1,0}=2 b_{011}, \; {d}^{0}_{2,0}=-3c_{003}, \; {d}^{2}_{2,0}=-(c_{021}+c_{102}), \; {d}^{1}_{1,0}= a_{110}, \; {d}^{5}_{2,0}=-b_{300},\;
{d}^{-1}_{2,0}=b_{003}, \; {d}^{2}_{1,0}=-2 b_{110}, &
\\
&{d}^{4}_{2,0}=3a_{300},\; {d}^{1}_{0,1}=\frac{1}{5}(b_{120}-4 b_{201}), \; {d}^{3}_{1,0}=-b_{200},\; {d}^{3}_{2,0}=-(3b_{201}+3b_{120}),
\; {d}^{0}_{0,1}=\frac{1}{5}(c_{021}-4 c_{102}).&
\eas

\begin{prop}
The quartic truncated normal form for equation \eqref{sys03} is given by
\bas
&w= -{x}\frac{\D}{\D{{y}}}-2{y}\frac{\D}{\D{{z}}}+\Big(2{y}\big(
({x}{{z}}-{{y}}^{2})({b}^1_{1}{{z}}+{b}^0_{1})+
{z}({b}^3_{0}{{z}}^{2}+ {b}^2_{0}{{z}}+{b}^1_{0}
)\big)\Big)\frac{\D}{\D{{x}}}&
\\
&+\big({z}({x}{{z}}-{{y}}^{2})({b}^1_{1}{{z}}+{b}^0_{1} )+{{z}}^{2}({b}^3_{0}{{z}}^{2}+ {b}^2_{0}{{z}}+{b}^1_{0})\big)\frac{\D}{\D{{y}}},&
\eas
whose first integrals are \(\Delta ={x}{z}-{{y}}^2\) and
\bas
&I(x, y, z)={x}-{b}^0_{1} {z}{{y}}^{2}+\frac{{z}^2}{2}({b}^1_{1}{{x}}{z}-{b}^1_{1}{{y}}^{2}+4{b}^1_{0}+2{b}^0_{1} {{z}}^{2}
+\frac{{b}^3_{0}{{z}}^{2}}{2}+\frac{2{b}^2_{0}{{z}}}{3}),&
\eas where
\bas
&{b}^1_{0}=b_{002}, \qquad\quad { b}^2_{0}=b_{003}+\frac{b_{002}a_{110}}{2}-\frac{3{b_{011}}^{2}}{4}, \qquad\quad
{b}^0_{1}= {\frac {{a_{110}}^{2}}{60}}+\frac{b_{110}b_{011}}{5}-\frac{b_{200}b_{002}}{5}+\frac{1}{5}(4b_{102}-b_{021}),&
\\&{b}^1_{1}= -{\frac {{a_{110}}^{3}}{378}}-\frac{{b_{110}}^{2}b_{002}}{7}+\frac{(4 c_{102}-c_{021})b_{011}}{15}
+\frac{{b_{011}}^{2}b_{200}}{21}+{\frac {12(b_{201}+b_{120})b_{002}}{105}}+\frac{8b_{110}c_{003}}{21}
+{\frac {12(b_{021}+b_{102})a_{110}}{105}}-\frac{4b_{200}b_{003}}{7}&
\\&
+\frac{2b_{011}(c_{021}+c_{102})}{35}-\frac{2b_{200}b_{002}a_{110}}{21}-{\frac {2b_{110}b_{011}a_{110}}{63}}
+\frac{2}{5}(b_{120}-4 b_{201})b_{002}+\frac{(4b_{102}-b_{021})a_{110}}{15},\qquad\qquad&
\\
&{b}^3_{0}=\frac{2 b_{003}a_{110}}{3}-\frac{6b_{110}b_{002}b_{011}}{5}+{\frac {4(3b_{021}+3b_{102})b_{002}}{15}}
-\frac{6b_{200}{b_{002}}^{2}}{5}+\frac{b_{002}{a_{110}}^{2}}{10}+{2c_{003}b_{011}}.\qquad\qquad\qquad\qquad\qquad&
\eas The vector potential normal form for vector field \eqref{sys03} is given by
\bas
&\big({x}+b^1_0\frac{ {{z}}^{2}}{2}+b^0_1 {\Delta {{z}}}+b^2_0\frac{ {{z}}^{3}}{3}+b^1_1\frac{\Delta {{z}}^{2}}{2}
+b^3_0\frac{ {{z}}^{4}}{4}\big)(z, -2y, x).&
\eas
\end{prop}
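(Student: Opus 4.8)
The plan is to present the claimed $w$, the invariant $I$, and the vector potential as three parallel descriptions of one normalized object, and to obtain the coefficients (which I abbreviate $b^i_k := b_{i,k}$) by the graded $\Sl$-homological procedure. By Theorem \ref{UNF} the first level normal form of any $v$ as in \eqref{sys03} is $w = {\rm B}^1_{0,0} + \sum_{i,k} b_{i,k}{\rm B}^{-1}_{i,k}$, because $\ker {\rm ad}_{\rm M} = \Span\{{\rm B}^{-1}_{i,k}\}$. I would first note that under the grading $\delta({\rm B}^l_{i,k}) = i+2k$ of \eqref{grade}, a quartic truncation retains exactly the grades $N\le 3$, so the only surviving normal form coefficients are $b_{1,0}, b_{2,0}, b_{0,1}, b_{3,0}, b_{1,1}$. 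Expressing these in terms of the $d^l_{i,k}$ of \eqref{sys03}, hence in the original $a_{ijk}, b_{ijk}, c_{ijk}$, is the whole content of the statement; once they are known, $w$, $I$, and the vector potential are read off mechanically.

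The engine is the $\Sl$-homological operator ${\rm ad}_{{\rm B}^1_{0,0}}$, which by \eqref{S3} acts within each isotypic block by ${\rm B}^l_{i,k}\mapsto (l-2i-1){\rm B}^{l+1}_{i,k}$, raising the weight $l$. Since each block $\langle {\rm B}^l_{i,k}: -1\le l\le 2i+1\rangle$ is an irreducible $\Sl$-module, one has the decomposition $\IM {\rm ad}_{\rm N}\oplus\ker {\rm ad}_{\rm M}$, so at each grade $N$ I would pick the near-identity generator $S_N$ in the complement of $\ker {\rm ad}_{\rm M}$ that annihilates every ${\rm B}^l_{i,k}$ of grade $N$ with $l\ge 0$, leaving only the ${\rm B}^{-1}_{i,k}$ component. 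Because $\B$ is $\delta$-graded, $\exp({\rm ad}_{S_N})$ fixes all grades below $N$, pins down the grade-$N$ coefficient $b_{N-2k,k}$, and injects corrections into grades above $N$ through nested brackets, which I would evaluate with the structure constants \eqref{LieEq} together with \eqref{a-1} and \eqref{L}.

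Executing grades $N=1,2,3$ in turn gives the coefficients. Grade $1$ has no lower grade to correct it, so $b_{1,0} = d^{-1}_{1,0} = b_{002}$, that is $b^1_0 = b_{002}$. Grade $2$ yields $b^2_0$ and $b^0_1$ with quadratic corrections built from brackets involving the grade-$1$ generator, and grade $3$ yields $b^3_0$ and $b^1_1$ with genuinely cubic corrections assembled from iterated brackets of the grade-$1$ and grade-$2$ generators. With the $b^i_k$ in hand the three listed representations follow without further work: $w$ is $\sum b_{i,k}{\rm B}^{-1}_{i,k}$ rewritten in $(x,y,z)$ via \eqref{Blik}; the secondary Clebsch potential $I$ equals (up to sign) $\Psi^{-1}(w)$, computed from Lemma \ref{iso} and the expansion $\mathfrak{b}^{-1}_{i,k}=-z^{i+1}\Delta^k/(i+1)$ of \eqref{pb}, with $\{I,\Delta\}=0$ and $w(\Delta)=0$ furnished by Theorem \ref{int}; and the vector potential is $\mathfrak{b}\,\nabla\Delta$ of Theorem \ref{vp0} with $\mathfrak{b}=\Psi^{-1}(w)$, giving the displayed $(z,-2y,x)$-multiple.

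I expect the main obstacle to be precisely the bookkeeping of the grade-$3$ corrections: each lower-grade generator feeds into $b^1_1$ and $b^3_0$ through several iterated Lie brackets, and the structure constants $a^{q_1,q_2}_{j,i_1,i_2}$ are intricate (compare the samples in Example \ref{SampleStructure}), so the final expressions for $b^1_1$ and $b^3_0$ require the careful collection of many terms, which is why the computation is carried out symbolically in Maple. To keep the bracket arithmetic tractable I would run the entire normalization inside the Poisson algebra $\mathfrak{B}$, where by Lemma \ref{iso} the Lie bracket becomes the simpler Poisson bracket \eqref{poissongenerat}: normalize the scalar secondary Clebsch potential there, and only at the very end transport the result back to $\B$ through the Lie isomorphism $\Psi$ and the Poisson representation \eqref{Bpoisson}.
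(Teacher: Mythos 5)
Your proposal is correct and follows essentially the same route as the paper: the paper's own proof is the single sentence ``the normal form coefficients are derived using an implementation of the formulas in Maple,'' and what you describe --- the $\delta$-graded $\Sl$-style homological procedure with complement $\ker\mathrm{ad}_{\rm M}$, evaluated via the structure constants and then converted to the invariant and vector potential forms through $\Psi^{-1}$, \eqref{Blik} and Theorem \ref{vp0} --- is precisely the computation that Maple implementation carries out. Your suggestion to run the bracket arithmetic inside the Poisson algebra $\mathfrak{B}$ is a sensible organizational refinement fully consistent with the paper's machinery, and your grade-by-grade bookkeeping (including $b^1_0=b_{002}$ at grade one and the list of surviving coefficients at grades $\le 3$) matches the stated result.
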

\bpr The normal form coefficient are derived using an implementation of the formulas in Maple.
\epr

\appendix

\section{Appendix}\label{SecAppend}

Let
\begin{eqnarray}\label{Av}
&\mathscr{A}:= \Span \left\{
\sum a^l_{i,k}{\rm A}^{l}_{i,k}\big|  -2\leqslant l \leqslant 2i+2, i, k\in\mathbb{N}_0, a^l_{i,k}\in{\mathbb{R}}\right\},&
\end{eqnarray}
and
\ba\label{CFamily}
&\mathscr{C}:= \Span \left\{\sum c^l_{i,k}{\rm C}^{l}_{i,k}\big|\,  0\leqslant l \leqslant 2i, i, k\in\mathbb{N}_0, c^l_{i,k}\in{\mathbb{R}}\right\},&
\ea where
\ba\label{am}
{\rm A}^{l}_{i,k}:= \frac{ \N^{l+2}{{z}}^{i}\Delta^k\mb{\frac{\D}{{\D x}}}}{{\kappa_{l+2, 2i+2}}}, \qquad \hbox{for }\quad
-2\leq l\leq 2i+2, i, k\in \mathbb{N}_0:=\mathbb{N}\cup\{0\},
\\\label{cm}
{\rm C}^{l}_{i,k}:=\frac{ \N^{l}z^i \Delta^k\mb{\rm E}}{\kappa_{l, 2i}}, \qquad\qquad\qquad \hbox{for }\quad 0\leq l\leq 2i, i, k\in \mathbb{N}_0:=\mathbb{N}\cup\{0\},
\ea and \({\rm E}= x\frac{\D}{{\D x}}+y\frac{\D}{\D y}+z\frac{\D} {{\D z}}.\) By equation \eqref{NfM}, we have
\ba\nonumber
&{\rm A}^l_{i,k}:=\frac{ \Delta^k\N^{{l+2}}\mb{z^{i+1}}}{\kappa_{l+2, 2i+2}}{\frac{\D}{\D x}}
- \frac{(l+2)\Delta^k\N^{l+1}\mb{z^{i+1}}}{(2i-l+1)\kappa_{l+1, 2i+2}}{\frac{\D}{\D y}}
+\frac{(l+1)(l+2)\Delta^k\N^{l}\mb{z^{i+1}}}{(2i-l+1)(2i-l+2)\kappa_{l, 2i+2}}\frac{\D}{\D z},&
\\\label{CEul}&{\rm C}^{l}_{i,k}:=\frac	{x\Delta^k\N^{l}\mb{z^{i}}}{\kappa_{l, 2i}}\frac{\D}{{\D x}}
+\frac{y\Delta^k\N^{l}\mb{z^{i}} }{\kappa_{l, 2i}}\frac{\D}{\D y}+
\frac{z\Delta^k\N^{l}\mb{z^{i}}}{\kappa_{l, 2i}}\frac{\D} {{\D z}}.\qquad\qquad\qquad\qquad\quad\;&
\ea

\begin{thm}\label{divA} For all	\(-2\leqslant l \leqslant 2i+2, i, k\in\mathbb{N}_0,\)
\be {\rm div}({\rm A}^l_{i,0})=0, \qquad
\nabla{\Delta}\cdot {\rm A}^l_{i, k}\neq0.
\ee
\end{thm}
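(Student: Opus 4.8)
The plan is to reduce both assertions to the explicit component form of \({\rm A}^l_{i,k}\) displayed just before the theorem (the display containing \eqref{CEul}), combined with the derivative identities \eqref{deriv} and Lemma \ref{A.1}. For the solenoidal statement \({\rm div}({\rm A}^l_{i,0})=0\), I would imitate the proof of Theorem \ref{Solenoid}. Setting \(k=0\) and differentiating the three components via \eqref{deriv}, the \(x\)-derivative of the first component yields \(\tfrac{(l+1)(l+2)(i+1)}{\kappa_{l+2,2i+2}}\N^{l}\mb{z^{i}}\), the \(y\)-derivative of the second yields \(-\tfrac{2(l+1)(l+2)(i+1)}{(2i-l+1)\kappa_{l+1,2i+2}}\N^{l}\mb{z^{i}}\), and the \(z\)-derivative of the third yields \(\tfrac{(l+1)(l+2)(i+1)}{(2i-l+1)(2i-l+2)\kappa_{l,2i+2}}\N^{l}\mb{z^{i}}\). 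Factoring out \((l+1)(l+2)(i+1)\N^{l}\mb{z^{i}}\) and using \(\kappa_{l,2i+2}=\tfrac{(2i+2)!}{(2i+2-l)!}\), each of the three bracketed coefficients collapses to \(\tfrac{(2i-l)!}{(2i+2)!}\), so the divergence equals \((1-2+1)\tfrac{(2i-l)!}{(2i+2)!}(l+1)(l+2)(i+1)\N^{l}\mb{z^{i}}=0\). The degenerate values \(l=-2,-1\), where \((l+1)(l+2)\) or \((l+1)\) vanishes, are then covered automatically.

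For the second assertion, I would first note that \(\nabla\Delta=(z,-2y,x)\), so \(\nabla\Delta\cdot{\rm A}^l_{i,k}\) is precisely the directional derivative \({\rm A}^l_{i,k}(\Delta)\). Using the components, this equals \(\Delta^{k}\) times
\[
\frac{z\,\N^{l+2}\mb{z^{i+1}}}{\kappa_{l+2,2i+2}}
+\frac{2(l+2)\,y\,\N^{l+1}\mb{z^{i+1}}}{(2i-l+1)\kappa_{l+1,2i+2}}
+\frac{(l+1)(l+2)\,x\,\N^{l}\mb{z^{i+1}}}{(2i-l+1)(2i-l+2)\kappa_{l,2i+2}}.
\]
The key idea is to recognize this three-term combination as a single \(\N\)-term. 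Since \(\N\) is a derivation with \(\N z=2y\), \(\N^{2}z=2x\), and \(\N^{3}z=0\), the finite Leibniz expansion gives \(\N^{l+2}\mb{z^{i+2}}=z\,\N^{l+2}\mb{z^{i+1}}+2(l+2)y\,\N^{l+1}\mb{z^{i+1}}+(l+1)(l+2)x\,\N^{l}\mb{z^{i+1}}\). The ratios \(\tfrac{\kappa_{l+2,2i+2}}{\kappa_{l+1,2i+2}}=2i+1-l\) and \(\tfrac{\kappa_{l+2,2i+2}}{\kappa_{l,2i+2}}=(2i+2-l)(2i+1-l)\) are exactly what is needed: after multiplying the displayed sum by \(\kappa_{l+2,2i+2}\), the three coefficients become \(1\), \(2(l+2)\), and \((l+1)(l+2)\). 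Hence \(\nabla\Delta\cdot{\rm A}^l_{i,k}=\tfrac{\Delta^{k}\N^{l+2}\mb{z^{i+2}}}{\kappa_{l+2,2i+2}}\). Equivalently, one may observe that \(\ad_\N^{m}(X)(\Delta)=\N^{m}(X(\Delta))\) for any vector field \(X\) because \(\N\Delta=0\), and apply this to \(X=z^{i+1}\Delta^{k}\tfrac{\D}{\D x}\), noting \(X(\Delta)=z^{i+2}\Delta^{k}\).

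It then remains to confirm this is not the zero polynomial. By Lemma \ref{A.1}, \(\N^{q}\mb{z^{m}}\neq0\) exactly when \(q\le 2m\); here \(q=l+2\) and \(m=i+2\), and \(l+2\le 2i+4=2(i+2)\) holds throughout the range \(-2\le l\le 2i+2\) (at the extreme \(l=2i+2\) one gets \(\N^{2i+4}\mb{z^{i+2}}=(2i+4)!\,x^{i+2}\neq0\)), so \(\nabla\Delta\cdot{\rm A}^l_{i,k}\neq0\). I expect the main obstacle to be purely bookkeeping: arranging the \(\kappa\)-ratios and the \((2i-l+1)(2i-l+2)\) denominators so that the three-term combination genuinely collapses to the single term \(\N^{l+2}\mb{z^{i+2}}\). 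Once that collapse is established, both the solenoidal identity (the \(1-2+1=0\) cancellation) and the non-vanishing claim follow at once from Lemma \ref{A.1}.
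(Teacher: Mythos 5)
Your proof is correct, and the two halves relate to the paper's proof differently. For \({\rm div}({\rm A}^l_{i,0})=0\) you do exactly what the paper does: differentiate the three components of \eqref{CEul} via \eqref{deriv}, factor out \((l+1)(l+2)(i+1)\N^{l}\mb{z^{i}}\), and observe that the three \(\kappa\)-ratios each reduce to \(\tfrac{(2i-l)!}{(2i+2)!}\), giving the \(1-2+1=0\) cancellation; nothing to add there. For the non-vanishing of \(\nabla\Delta\cdot{\rm A}^l_{i,k}\), however, you take a genuinely different and cleaner route. The paper expands each of the three terms into monomials via Lemma \ref{A.1}, treats even and odd \(l\) separately (writing out only the even case), and exhibits the single monomial \(x^{s+1}z^{i-s+1}\) whose coefficient is manifestly nonzero. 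You instead prove the closed-form identity \(\nabla\Delta\cdot{\rm A}^l_{i,k}=\tfrac{\Delta^{k}\,\N^{l+2}\mb{z^{i+2}}}{\kappa_{l+2,2i+2}}\), either by the finite Leibniz expansion of \(\N^{l+2}(z\cdot z^{i+1})\) together with the ratios \(\kappa_{l+2,2i+2}/\kappa_{l+1,2i+2}=2i+1-l\) and \(\kappa_{l+2,2i+2}/\kappa_{l,2i+2}=(2i+2-l)(2i+1-l)\), or more conceptually from \(\ad_\N^{m}(X)(\Delta)=\N^{m}(X(\Delta))\), which holds since \(\N\Delta=0\). Both verifications check out, and the vanishing criterion of Lemma \ref{A.1} (\(\N^{q}\mb{z^{m}}=0\) iff \(q>2m\)) then settles non-vanishing uniformly in the parity of \(l\) over the whole range \(-2\le l\le 2i+2\). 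Your approach buys a stronger statement (an explicit formula for \({\rm A}^l_{i,k}(\Delta)\), not merely its nonvanishing), avoids the case split, and makes transparent why the boundary values of \(l\) cause no trouble; the paper's approach is more computational but requires no structural observation beyond Lemma \ref{A.1}.
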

\bpr
By equation \eqref{deriv} and definition \({\rm A}^l_{i,k},\) we have
\bas
&{\nabla} \cdot{\rm A}^l_{i,0}=(l+1)(l+2)(i+1)\big(\frac{1}{\kappa_{l+2,2i+2}}
-\frac{2}{\kappa_{l+1,2i+2}(2i-l+1)}
-\frac{1}{\kappa_{l,2i+2}(2i-l+1)(2i-l+2)}\big)\N^{ l}\mb{{z}^{i}}.&
\eas
Since  the coefficient \(\N^{ l}\mb{{z}^{i}}\) is zero, \({\rm div}({\rm A}^l_{i,k})=0\)
for any \(i\in\mathbb{N}_0,\) and \(-2\leqslant l \leqslant 2i+2 .\)

When \(l\) is even, say \(l=2s,\) by Lemma \ref{A.1} we have
\bas
&\nabla{\Delta}\cdot {\rm A}^l_{i,0}=
\frac{(2s+2)!(i+1)!(2i-2s)!}{(2i+2)!}\sum _{n=0}^{s+1}{\frac{{4}^{n}{x}^{s-n+1}{y}^{2n}{z}^{i-n-s+1}}{(s-n+1)!(2n)!(i-n-s)!}}
+\frac{(2s+2)!(i+1)!(2i-2s)!}{{(2i+2)!}}\sum_{n=0}^{s}{\frac{{4}^{n}{x}^{s-n+1}{y}^{2n}{z}^{i-n-s+1}}{(s-n)!(2n)!(i-n-s+1)!}}&
\\&+\frac{4(2i-2s)!(s+1)}{{(2i+2)!}}\sum_{n=1}^{s+1}{\frac{(2s+1)!(i+1)!{2}^{2n-1}{x}^{s-n+1}{y}^{2n}{z}^{i-n-s+1}}{(s-n+1)!(2n-1)!(i-n-s+1)!}}.\qquad\qquad\qquad\qquad&
\eas
Since  the coefficient of \({x}^{s+1}{z}^{i-s+1}\) is
\bas
{\frac{\left(i+2\right)!\left( 2i-2s\right)!\left(2s+2\right)!}{\left(2i+2\right)!\left(s+1\right)!\left(i-s+1\right)!}}\neq0,
\eas \(\Delta\) is not a first integral for \({\rm A}^{l}_{i,k}.\)
The argument is similar for when  \(l\) is odd.
\epr
\begin{thm}
Each three dimensional vector field \(v\) can be uniquely expanded in terms of formal sums of polynomial generators \({\rm A}^{l}_{i,k},\) \({\rm B}^{l}_{i,k}\) and
\({\rm C}^{l}_{i,k}\) from \(\mathscr{A},\) \(\B\) and \(\mathscr{C},\) respectively.
\end{thm}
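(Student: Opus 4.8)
The plan is to prove that the three families $\{{\rm A}^l_{i,k}\}$, $\{{\rm B}^l_{i,k}\}$, $\{{\rm C}^l_{i,k}\}$ together constitute a graded (topological) basis for the space $V$ of all three dimensional formal vector fields; existence of the claimed expansion is then spanning, and its uniqueness is linear independence. The entire argument rests on Theorem \ref{3DVects}, which already identifies $V$ with $\Span\{{\rm N}^nz^i\Delta^k\mb{\frac{\D}{\D x}}, {\rm N}^nz^i\Delta^k\mb{\M}, {\rm N}^nz^i\Delta^k\mb{\rm E}\}$ and, in the course of its proof, records two facts I shall reuse: the linear independence of the associated ($z^{i+1}$-shifted) homogeneous set, and the generating function $\frac{1}{(1-t)^3}$ that controls the graded dimensions of $V$.

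First I would observe, straight from the defining formulas \eqref{am}, \eqref{bm}, \eqref{cm}, that each ${\rm A}^l_{i,k}$, ${\rm B}^l_{i,k}$ and ${\rm C}^l_{i,k}$ is a nonzero scalar multiple (the pertinent $\kappa$ factor) of an $\ad_\N$-iterate applied to one of the highest weight generators $z^i\Delta^k\frac{\D}{\D x}$, $z^i\Delta^k\M$, $z^i\Delta^k{\rm E}$ of $\mathcal{K}=\ker\ad_\M$. Because these highest weight vectors form a basis of $\mathcal{K}$, and because every $\ad_\N$-iterate beyond the length of its string vanishes, passing to the admissible ranges $-2\le l\le 2i+2$, $-1\le l\le 2i+1$, $0\le l\le 2i$ discards only zero vectors. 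Hence the A/B/C generators and the generators of Theorem \ref{3DVects} have one and the same span, namely all of $V$; this yields existence.

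Uniqueness I would obtain degree by degree. Each generator is homogeneous of degree $i+2k+1$ once realized through \eqref{CEul} and \eqref{Blik}, so it suffices to work inside the finite dimensional component $V_N$ of total degree $N$. There I would count the A/B/C generators: for each admissible pair $(i,k)$ the three families contribute $\ad_\N$-strings whose lengths are determined by the ${\rm H}$-weights of the corresponding highest weight vectors, and summing these lengths over all $(i,k)$ subordinate to degree $N$ reproduces $\dim V_N$ as prescribed by the generating function in Theorem \ref{3DVects}. A spanning set of the exact cardinality of a finite dimensional space is necessarily a basis, so the degree-$N$ generators are linearly independent. Letting $N$ vary and using continuity in the filtration topology then promotes this to uniqueness of the full formal expansion, completing the proof.

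The step I expect to be the genuine obstacle is this per-degree bookkeeping, because the A/B/C labelling does not align verbatim with the $(n,i,k)$ labelling of Theorem \ref{3DVects}. The $\frac{\D}{\D x}$-family is realized through $z^{i+1}$ rather than $z^i$ (contrast \eqref{am} with \eqref{CEul}, and note ${\rm A}^{-2}_{i,0}=z^{i+1}\frac{\D}{\D x}$), so one must verify that its highest weight vectors, including the pure powers $\Delta^k\frac{\D}{\D x}$ at the bottom of the range, are all accounted for; only then do the three towers of $\ad_\N$-strings exhaust a full basis of $\mathcal{K}$ and make the cardinality count agree with $\dim V_N$. Checking simultaneously that no normalization constant $\kappa$ degenerates in the interior of each range—so that no generator collapses to zero or is counted twice—is the delicate part; once this combinatorial matching is carried out, the basis property, and with it the theorem, follow.
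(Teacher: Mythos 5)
Your plan is sound and, at bottom, it is the paper's own argument: the paper disposes of this theorem in one line as a corollary of Theorem \ref{3DVects}, Lemma \ref{common} and Remark \ref{commonac}, and your spanning step is exactly the content of Theorem \ref{3DVects} (the A/B/C generators are the $\ad_\N$-strings over the basis $z^i\Delta^k\frac{\D}{\D x},\, z^i\Delta^k\M,\, z^i\Delta^k{\rm E}$ of $\mathcal{K}=\ker\ad_\M$). Where you genuinely diverge is the uniqueness half: the paper extracts it from the disjointness of monomial supports --- the ${\rm Terms}$ partitions of Lemma \ref{common}(\ref{f}) and Remark \ref{commonac}, which single out the coefficient of each generator directly --- whereas you propose a cardinality-versus-dimension count against the generating function. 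Your route is cleaner to state but shifts all the weight onto the bookkeeping you flag; the paper's route is heavier machinery but localizes each coefficient explicitly. For the record, your count does close under the right reading: at coefficient degree $d$ the strings have lengths $2i+3$, $2i+3$, $2i+1$ attached to highest weight vectors of coefficient degrees $i+2k$, $i+2k+1$, $i+2k+1$ respectively, and
\[
\sum_{i+2k=d}(2i+3)\;+\;\sum_{i+2k=d-1}\bigl((2i+3)+(2i+1)\bigr)\;=\;\tfrac{3(d+1)(d+2)}{2}\;=\;\dim V_d .
\]
The caveat you raise about the $\frac{\D}{\D x}$-family is real and is the one point you must settle before the count is legitimate: read as in \eqref{am}, $\ad_\N^{l+2}(z^i\Delta^k\frac{\D}{\D x})$ is a string of length $2i+3$ and the two top labels $l=2i+1,\,2i+2$ of the nominal range $-2\le l\le 2i+2$ contribute only the zero vector, so the tally above is correct; read instead through the displayed component formula (strings of $z^{i+1}\Delta^k\frac{\D}{\D x}$, length $2i+5$, coefficient degree $i+2k+1$), the elements $\ad_\N^n(\Delta^k\frac{\D}{\D x})$ are never produced and the count already fails at degree $0$, where no constant vector fields appear. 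So your proof is correct provided you commit to the first reading and discard the degenerate endpoint labels; as written, the proposal identifies this fork but leaves it unresolved.
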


\bpr
This is a straightforward corollary of Theorem \ref{3DVects}, Lemma \ref{common} and Remark \ref{commonac}.
\epr
\bibliographystyle{plain}
\bibliography{BTZ}
\end{document}